\newtheorem{theorem}{Theorem}[section]
\newtheorem{lemma}[theorem]{Lemma}
\newcommand{\suchthat}{\;\ifnum\currentgrouptype=16 \middle\fi|\;}
\def\R{{\mathbb{R} }}
\renewcommand{\leq}{\leqslant}
\renewcommand{\geq}{\geqslant}
\DeclareMathOperator{\dvr}{div}
\newcommand{\tder}{\partial_t}
\newcommand{\LPND}[1]{L^{#1}_{\dvr}(\Omega)}
\newcommand{\LND}{\LPND{2}}
\newcommand{\WND}{W^{1,2}_{0,\dvr}(\Omega)}
\numberwithin{equation}{section}
\newtheorem{thm}{Theorem}
\numberwithin{thm}{section}
\newaliascnt{lemma}{thm}
\newtheorem{lem}[lemma]{Lemma}
\newaliascnt{proposition}{thm}
\newaliascnt{corollary}{thm}
\newaliascnt{definition}{thm}
\newtheorem{mydef}[definition]{Definition}
\newaliascnt{remark}{thm}
\def\eR{{\mathbb{R} }}
\def\eN{\mathbb{N}}
\newcommand\dt{\; \mathrm{d}t}
\title[Existence of weak solutions to a diffuse interface model for magnetic fluids]{Global existence of weak solutions to a diffuse interface model for magnetic fluids}
\author{Martin Kalousek$^{\dagger,\ddagger}$, Sourav Mitra$^\ddagger$, Anja Schl\"omerkemper$^\ddagger$}
\address{$^\dagger$ Institute of Mathematics, Czech Academy of Sciences, \v{Z}itn\'a 25, 11567 Prague, Czech Republic}
\address{$^\ddagger$ Institute of Mathematics, University of W{\"u}rzburg, Emil-Fischer-Str.\ 40, 97074 W{\"u}rzburg, Germany}
\email{martin.kalousek@mathematik.uni-wuerzburg.de}
\email{sourav.mitra@mathematik.uni-wuerzburg.de}
\email{anja.schloemerkemper@mathematik.uni-wuerzburg.de}
\begin{document}
		\maketitle
		\vspace{-1cm}
		\begin{abstract}
		    This article is devoted to the derivation and analysis of a system of partial differential equations modeling a diffuse interface flow of two Newtonian incompressible magnetic fluids. The system consists of the incompressible Navier-Stokes equations coupled with an evolutionary equation for the magnetization vector and the Cahn-Hilliard equations. We show global in time existence of weak solutions to the system using the time discretization method. 
		\end{abstract}
		\noindent{\bf Key words}. Cahn-Hilliard equations, diffuse interface model, global existence,  implicit time discretization, magnetization, incompressible Navier-Stokes equations, weak solution.
		\smallskip\\
		\noindent{\bf AMS subject classifications}. Primary: 35Q35 Secondary: 35D30, 76D05, 76T99.
		
		\section{Introduction}
		
		In this article we consider the flow of two viscous incompressible fluids with magnetic properties undergoing partial mixing in a bounded domain $\Omega\subset \R^d$, $d=2,3$ with its boundary having $C^{1,1}$ regularity. Let $T>0$ and $Q_T = \Omega \times (0,T)$. The fluid boundary is denoted by $\partial\Omega$ and $\Sigma_{T}$ denotes $\partial\Omega\times(0,T)$.  The main result of this article is Theorem~\ref{Thm:Main} on global existence of weak solutions to the following diffuse interface model coupling the incompressible Navier-Stokes equations, a Cahn-Hilliard dynamics and a gradient flow for the magnetization vector, which we derive in Section~\ref{sec:derivation}. In diffuse interface models a sharp interface is replaced by a thin interfacial layer where a partial mixing of two fluids is possible. This mixing is described by an order parameter $\phi:Q_T \to \R$, which in our case is the concentration difference of the two fluids. 
		Let $v:Q_T\to \R^d$ denote the mean fluid velocity, $p: Q_T\to \R$ the pressure, $M:Q_{T}\rightarrow\mathbb{R}^{3}$ the magnetization, and $\mu:Q_T \to \R$ the chemical potential. Then the system reads
	\begin{equation}\label{diffviscoelastic*}
	\begin{alignedat}{2}
	\partial_{t}v+(v\cdot\nabla)v-\nu\Delta v+\nabla p=&\mu\nabla\phi+\frac{\xi(\phi)}{\alpha^{2}}((|M|^{2}-1)M)\nabla M-\dvr(\xi(\phi)\nabla M)\nabla M&&\mbox{ in } Q_{T},\\
	\dvr v=&0&&\mbox{ in } Q_{T},\\
	\partial_{t}M+(v\cdot\nabla)M=&\dvr(\xi(\phi)\nabla M)-\frac{\xi(\phi)}{\alpha^{2}}(|M|^{2}-1)M&&\mbox{ in } Q_{T},\\
	\partial_{t}\phi+(v\cdot\nabla)\phi=&\Delta \mu&&\mbox{ in } Q_{T},\\
	\mu=&-\eta\Delta\phi+\frac{1}{\eta}(|\phi|^{2}-1)\phi+\xi'(\phi)\frac{|\nabla M|^{2}}{2}+\frac{\xi'(\phi)}{4\alpha^{2}}(|M|^{2}-1)^{2}&&\mbox{ in }Q_{T},\\
	v=0,\ \partial_{n}M=&0,\ \partial_{n}\phi=\partial_{n}\mu=0&&\mbox{ on } \Sigma_{T},\\
	 (v,M,\phi)(\cdot,0)=&(v_{0},M_{0},\phi_{0})&& \mbox{ in }\Omega,
	\end{alignedat}
	\end{equation} 
In system \eqref{diffviscoelastic*}, $\nu,$ $\alpha$ and $\eta$ are positive constants, where $\nu$ is the viscosity coefficient, $\alpha$ is a factor needed for a penalization of the saturation condition of the magnetization vector punishing the deviation of $|M|$ from 1, and $\eta$ corresponds to the thickness of the interfacial region. The function $\xi(\phi)$ is the mobility of the magnetization; we assume it to be non degenerate, i.e., having a positive lower bound, and both $\xi$ and $\xi'$ are bounded from above, cf.\ \eqref{XiAssum}.

The main result of this article (Theorem~\ref{Thm:Main}) is an existence result of global weak solutions to system~\eqref{diffviscoelastic*}. We present this theorem in Section~\ref{sec:mainresult}, where we also fix the functional framework and present the main ideas of the proof. After the derivation of the system in Section~\ref{sec:derivation}, we introduce a time-discretized system and show existence of solutions to this system in Section~\ref{sec2}. In Section~\ref{sec3} we pass from the time discretized model to the original system~\eqref{diffviscoelastic*} and prove the central existence result Theorem~\ref{Thm:Main}. In Section~\ref{sec:comments} we comment on potential extensions of our work to a setting where viscosity and mobility coefficients may depend on the order parameter $\phi$, and on difficulties that arise when coupling our system with an evolution equation for the deformation tensor. In the appendix 
we provide several supporting lemmas. 

In the remainder of this introduction we embed our work in the existing literature. Diffuse interface models without magnetization, involving fluids with matched densities date back to \cite{hohenberg}. The article \cite{gurtin} gives a continuum mechanical derivation of such a model based on the concept of microforces. For a review of this topic we also refer to \cite{anderson}. One of the first mathematical results for such a system can be found in \cite{star} which deals with the qualitative behavior and stability of stationary solutions of the system as $t\rightarrow\infty.$ \\
Global in time existence of weak solutions of such a model in both dimension 2 and 3 is proved in \cite{boyer}. The article \cite{boyer} also deals with the existence of strong solutions for non degenerate mobility and proves that the model under consideration admits a strong solution globally in time in dimension 2 and locally dimension 3. Later it is proved in \cite{abelsarma} that any weak solution to such a system becomes regular for large times and converges as $t\rightarrow\infty$ to a stationary solution to the system. The proof of the result in \cite{abelsarma} is based on a new regularity theory of Cahn-Hilliard equation in spaces of fractional time regularity and maximal regularity theory of Stokes system. Unlike our case both the articles \cite{boyer} and \cite{abelsarma} deal with a singular potential in the energy $\mathcal{E}_{mix}$ whereas we use a double well potential, see \eqref{freeenergy-mix} for details. In the articles \cite{boyer} and \cite{abelsarma} the use of a singular potential plays a crucial role in proving that the order parameter stays in a physically reasonable interval $[-1,1]$, which does not hold true in our setting. For some recent results on the diffuse interface models we would further like to quote the articles \cite{gal,Giorgini} and the references therein.\\
	Several diffuse interface models (without magnetization) including fluids of unmatched densities have been developed in the literature. For instance one can consult the articles \cite{lowengrub,boyer2,ding,AbelsGrun}. Mathematical analysis of the thermodynamically consistent model introduced in \cite{AbelsGrun} can be found in \cite{AbDeGa113} (the case of non degenerate mobility) and \cite{AbDeGa213} (the case of  degenerate mobility ). The readers can also consult \cite{AbelsFeireisl} for the existence of weak solution to a compressible diffuse interface model.\\
	The mathematical study of diffuse interface model with fluids having different magnetic behavior is quite new in the literature. The article \cite{Nochetto} derives a simplified phase field model for ferromagnetic fluids which involves incompressible Navier-Stokes equations, advection reaction equation for the magnetization and the  Cahn-Hilliard equation for the phase field. The article \cite{Nochetto} also provides an energy stable numerical scheme for the model they derive. The article \cite{yangmao} proposes and proves the existence of weak solution for a diffuse interface magnetohydrodynamic model which involves the incompressible Navier-Stokes equations, the Maxwell
	 equations of electromagnetism and the Cahn-Hilliard equations.\\
	 The model in \cite{yangmao} is different from ours since we consider a gradient flow equation for the magnetization and not the Maxwell's equations for the magnetic field. Further unlike \cite{yangmao}, in our case the magnetization $M$ enters into the Cahn-Hilliard dynamics. This leads to one of the main mathematical difficulties in the present article, since the presence of $|\nabla M|^{2}$ in the Cahn-Hilliard part, cf.\ \eqref{diffviscoelastic*}$_{5}$, refrains us from obtaining $L^{2}_{loc}(W^{2,2})$ (unlike  \cite{AbDeGa113} and \cite{AbDeGa213}) regularity for the order parameter $\phi.$ In fact we only obtain that $\phi\in C_{w}(W^{1,2}).$ This hinders the possibility to bootstrap the regularity of $\nabla M$ from the fact that $\dvr(\xi(\phi)\nabla M)\in L^{2}(Q_{T})$ (this is just a consequence of the energy estimate).\\ 
	 To the best of our knowledge the present article is the first one proving the existence of weak solutions for a system of the form  \eqref{diffviscoelastic*}. The model \eqref{diffviscoelastic*} shares some similarities with the ones considered in \cite{shenyang} and \cite{yuefeng} (two phase model involving nematic liquid crystals and a incompressible viscous fluid) but we recall that these articles are written from a modeling and numerical point of view whereas our goal is to prove a mathematical theory of existence.


\section{Functional framework, main existence result and ideas of its proof} \label{sec:mainresult}	

Before we give the definition of weak solutions and state our main existence result, we introduce the notations which will be used further. Throughout the article we denote by $c$ a generic constant which might vary from line to line. By $\cdot$, a centered dot, we denote the scalar product of vectors and matrices. We use the standard notation for Lebesgue spaces and Sobolev spaces on a domain $\Omega\subset\eR^d$, $d=2,3$, i.e., we write $L^{p}(\Omega), W^{s,p}(\Omega)$ respectively, for $p\in[1,\infty]$, $s\in(0,\infty]$. The spaces of $k$--times differentiable functions on $\Omega$ is denoted by $C^k(\overline\Omega)$ and $C^{k,\lambda}(\overline\Omega)$ stands for the subspace of $C^k(\overline{\Omega})$ consisting of functions whose $k$--th derivative is H\"older continuous with an exponent $\lambda\in(0,1]$ in $\Omega$. The subscript $c$ in the expression of a function space signifies the compactness of the support of the functions involved. Since we do not distinguish explicitly in the notation between a Banach space $X$ of scalar functions and a space of a vector-valued functions with $m$ components each of which belongs to $X$, we use the notation $ \|\cdot \|_{L^p(\Omega)}$, $ \|\cdot \|_{W^{s,p}(\Omega)}$, etc. For Banach spaces $X,Y$ we denote by  $X\hookrightarrow Y$ ($X\stackrel{C}{\hookrightarrow} Y$) the continuous (compact) embedding of $X$ to $Y$. By $X'$ we mean the dual to a Banach space $X$ and for the corresponding duality pairing $\left\langle\cdot,\cdot\right\rangle$ is used. By $C_w([0,T];X)$ we mean a subspace of $L^\infty(0,T;X)$ consisting of such $f$ for which the mapping $t\mapsto\left\langle \phi, f(t)\right\rangle$ is continuous on $[0,T]$ for each $\phi\in X'$. Further, we set 
	\begin{align*}
	\LND&=\overline{\{v\in C^\infty_c(\Omega):\ \dvr v=0\text{ in }\Omega}^{\|\cdot\|_{L^2}},\\
	\WND&=\overline{\{v\in C^\infty_c(\Omega):\ \dvr v=0\text{ in }\Omega}^{\|\cdot\|_{W^{1,2}}},\\
	W^{2,2}_n(\Omega)&=\{u\in W^{2,2}(\Omega): \partial_n u=0\text{ on }\partial\Omega\},\\
	V(\Omega)&=\WND\cap W^{2,2}(\Omega).
	\end{align*}

	Before we state the precise definition of a weak solution we make assumptions on the function $\xi:\eR\to\eR$ that read: 
	\begin{equation}\label{XiAssum}
	\begin{split}
	\xi\in C^1(\eR),\\
	0<c_1\leq\xi\leq c_2\text{ on }\eR, \text{ for some }c_1,c_2>0,\\
	\xi'\leq c_3\text{ on }\eR, \text{ for some }c_3>0.\
	\end{split}
	\end{equation}
	The following function provides an example of such a non degenerate function $\xi$. Set 
	$$\xi(\phi)=(1-\mathcal{H}_{\eta}(\phi))\xi_{1}+\xi_{2}\mathcal{H}_{\eta}(\phi),$$
	where $\xi_{1},\xi_{2}>0$ are the exchange constants for the individual fluids and $\mathcal{H}_{\eta}(x)=\frac{1}{1+e^{-\frac{x}{\eta}}}$ is a regularization of the Heaviside step function. Since $\mathcal{H}_{\eta}'$ is bounded, $\xi(\cdot)$ of course satisfies the assumptions \eqref{XiAssum}. Such a regularization of the Heaviside function is also used in \cite{Nochetto} and \cite{yangmao}.
	
	 At this moment, we are in a position to define the precise notion of weak solution to \eqref{diffviscoelastic*} whose existence is investigated in this paper. 
	\begin{mydef}[Definition of weak solutions]\label{DefWS}
	For given $(v_0,M_0,\phi_0)\in L^2_{\dvr}(\Omega)\times W^{1,2}(\Omega)\times W^{1,2}(\Omega)$ we call the quadruple $(v,M,\phi,\mu)$ possessing the regularity
	\begin{equation}\label{functionalspaces}
	\begin{split}
	v&\in C_w([0,T];L^2_{\dvr}(\Omega))\cap L^2(0,T;W^{1,2}_{0,\dvr}(\Omega)),\\
	M&\in C_w([0,T];W^{1,2}(\Omega))\cap W^{1,2}(0,T;L^{\frac{3}{2}}(\Omega)),\\
	\phi&\in C_w([0,T];W^{1,2}(\Omega))\cap C^{0}([0,T];L^{2}(\Omega)),\\
	\mu&\in L^2(0,T;W^{1,2}(\Omega)),
	\end{split}
	\end{equation}
	a weak solution to \eqref{diffviscoelastic*} if it satisfies 
	\begin{equation}\label{WeakForm}
	\begin{split}
	\int_{\Omega}v(t)\cdot\psi_1(t)-\int_{\Omega}v_0\cdot\psi_{1}(0)=&\int_0^t\int_{\Omega}\biggl(v\cdot\tder\psi_{1} -(v\cdot\nabla)v\cdot\psi_{1}-\nu\nabla v\cdot\nabla\psi_{1} -\nabla\mu\phi\cdot\psi_{1}\\
	&+\left(\frac{\xi(\phi)}{\alpha^{2}}\bigl((|M|^{2}-1)M\bigr)\nabla M
	-\dvr\bigl(\xi(\phi)\nabla M\bigr)\nabla M\right)\cdot\psi_{1}\biggr),\\
	\int_\Omega M(t)\cdot\psi_2(t)-\int_\Omega M_0\cdot \psi_{2}(0)=&\int_0^t\int_\Omega\biggl(M\cdot\tder\psi_{2}-(v\cdot\nabla) M\cdot\psi_{2}-\xi(\phi)\nabla M\cdot\nabla\psi_{2}\\
	&-\frac{1}{\alpha^2}\bigl(\xi(\phi)(|M|^2-1)M\bigr)\cdot\psi_{2}\biggr),\\
	\int_\Omega \phi(t)\psi_3(t)-\int_\Omega\phi_0\psi_3(0)=&\int_0^t\int_\Omega\left(\phi\tder\psi_{3}-(v\cdot\nabla)\phi\psi_{3}-\nabla \mu\cdot\nabla\psi_{3}\right),\\
	\int_0^t\left(\int_\Omega\mu\psi_{3}-\eta\int_\Omega\nabla\phi\cdot\nabla\psi_{3}\right)=&\int_0^t \int_\Omega\biggl(\frac{1}{\eta}(|\phi|^{2}-1)\phi+\xi'(\phi)\frac{|\nabla M|^{2}}{2}+\frac{\xi'(\phi)}{4\alpha^{2}}(|M|^{2}-1)^{2}\biggr)\psi_{3}
	\end{split}
	\end{equation}
	for all $t\in(0,T)$, for all $\psi_{1}\in C^{1}_{c}([0,T);V(\Omega))$, $\psi_{2}\in C^{1}_{c}([0,T);W^{1,2}(\Omega))$ and all \\
	$\psi_{3}\in C^{1}_{c}\left([0,T);W^{1,2}(\Omega)\cap L^\infty(\Omega)\right)$.
	The initial data are attained in the form 
	\begin{equation}\label{InitDataAtt}
	\lim_{t\to 0_+}\left(\|v(t)-v_0\|_{L^2(\Omega)}+\|M(t)-M_0\|_{W^{1,2}(\Omega)}+\|\phi(t)-\phi_0\|_{W^{1,2}(\Omega)}\right)=0.
	\end{equation}
	\end{mydef} 

	Having introduced all the necessary ingredients we formulate the main result of this article that deals with the existence of a weak solution to \eqref{diffviscoelastic*}.
	\begin{thm}\label{Thm:Main}
		Let $T>0$, $\Omega\subset\eR^d$ be a bounded domain of class $C^{1,1},$ the assumptions in \eqref{XiAssum} hold and the initial data $(v_0,M_0,\phi_0)\in \LND\times W^{1,2}(\Omega)\times W^{1,2}(\Omega)$ be given. Then there exits a weak solution to \eqref{diffviscoelastic*} in the sense of Definition~\ref{DefWS}.
	\end{thm}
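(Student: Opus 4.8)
The plan is to construct the weak solution via the implicit time-discretization (Rothe) method announced in the introduction, and to pass to the limit as the time step tends to zero. Fix $T>0$ and a natural number $N$, set $h=T/N$, and define discrete times $t_k=kh$. Starting from $(v_0,M_0,\phi_0)$, and having constructed $(v^{k-1},M^{k-1},\phi^{k-1},\mu^{k-1})$, one solves an elliptic system for $(v^k,M^k,\phi^k,\mu^k)$ in which $\partial_t$ is replaced by the backward difference quotient $\tfrac1h(v^k-v^{k-1})$, etc., the convective terms are taken implicitly, and the Cahn--Hilliard pair $(\phi^k,\mu^k)$ is coupled as usual. Solvability of each discrete step is obtained by a Galerkin scheme together with a fixed-point argument (Schauder or Leray--Schauder), using the structure of the nonlinearities; this is the content of the existence result for the time-discretized model in Section~\ref{sec2}, which I would invoke.

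The heart of the matter is the \emph{discrete energy estimate}. The natural energy here is
\[
\mathcal{E}(v,M,\phi)=\int_\Omega\Bigl(\tfrac12|v|^2+\tfrac{\xi(\phi)}2|\nabla M|^2+\tfrac{\xi(\phi)}{4\alpha^2}(|M|^2-1)^2+\tfrac\eta2|\nabla\phi|^2+\tfrac1{4\eta}(|\phi|^2-1)^2\Bigr).
\]
Testing the discrete momentum equation by $v^k$, the discrete $M$-equation by $-\dvr(\xi(\phi^k)\nabla M^k)+\tfrac{\xi(\phi^k)}{\alpha^2}(|M^k|^2-1)M^k$ (equivalently the combination that mirrors the variational derivative of the magnetic energy), the discrete $\phi$-equation by $\mu^k$, and the $\mu^k$-equation by $\tfrac1h(\phi^k-\phi^{k-1})$, and adding, all the coupling terms — the Kelvin force $\tfrac{\xi(\phi)}{\alpha^2}(|M|^2-1)M\cdot\nabla M$, the Korteweg-type term $-\dvr(\xi(\phi)\nabla M)\nabla M$, the capillary term $\mu\nabla\phi$, and the $\xi'(\phi)$ terms in $\mu$ — cancel pairwise by design of the model (this is exactly the thermodynamic consistency established in the derivation in Section~\ref{sec:derivation}). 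Using the elementary identity $a(a-b)=\tfrac12(a^2-b^2)+\tfrac12(a-b)^2$ for the time-difference terms, one obtains, after summation over $k$, a bound of the form $\mathcal{E}(v^N,M^N,\phi^N)+\sum_k h\int_\Omega(\nu|\nabla v^k|^2+|\nabla\mu^k|^2)\le\mathcal{E}(v_0,M_0,\phi_0)$, plus the extra nonnegative difference terms. Because $\xi$ is bounded above and below by positive constants \eqref{XiAssum}, this yields uniform bounds: $\{v^k\}$ in $L^\infty(L^2)\cap L^2(W^{1,2}_{0,\dvr})$, $\{M^k\}$ in $L^\infty(W^{1,2})$ with $\{|M^k|^2-1\}$ in $L^\infty(L^2)$, $\{\phi^k\}$ in $L^\infty(W^{1,2})$, and $\{\mu^k\}$ in $L^2(W^{1,2})$ (the mean of $\mu^k$ being controlled separately via the $\mu$-equation and the potential terms, using a Poincar\'e--Wirtinger inequality). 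One then also reads off $\{\dvr(\xi(\phi^k)\nabla M^k)\}$ bounded in $L^2(Q_T)$, and from the $M$-equation $\{\partial_t^h M\}$ bounded in some negative/low-integrability space, giving $M\in W^{1,2}(0,T;L^{3/2})$ in the limit (as stated in \eqref{functionalspaces}), via interpolation/Sobolev embeddings in dimension $d\le 3$.

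Next I would build the piecewise-constant and piecewise-affine interpolants $\overline{X}_N,\widetilde{X}_N$ of the discrete solutions, transcribe the energy bounds to them, and extract weakly/weakly-* convergent subsequences. Strong compactness is obtained from Aubin--Lions--Simon-type arguments: the time-difference bounds plus the spatial bounds give $v_N\to v$ strongly in $L^2(0,T;L^2)$ (hence convective term passes), $\phi_N\to\phi$ strongly in $L^2(0,T;L^2)$ and in fact $C([0,T];L^2)$ after the limit (yielding the cubic term $(|\phi|^2-1)\phi$ and the product $\nabla\mu\,\phi$), and $M_N\to M$ strongly in $L^2(0,T;L^2)$ and, crucially, $\nabla M_N\to\nabla M$ strongly in $L^2(Q_T)$. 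The last point is the \textbf{main obstacle}: one does \emph{not} have $W^{2,2}$-regularity for $M$ (because $\phi$ is only $C_w(W^{1,2})$, so $\xi(\phi)$ is not regular enough to bootstrap), so strong convergence of $\nabla M_N$ cannot come from compactness of second derivatives. Instead I would recover it from the energy identity itself: the discrete energy bound controls $\sum_k h\|\dvr(\xi(\phi^k)\nabla M^k)\|_{L^2}^2$, and combining the limiting $M$-equation tested against $M$ with the discrete one tested against $M^k$, one shows $\int_{Q_T}\xi(\phi_N)|\nabla M_N|^2\to\int_{Q_T}\xi(\phi)|\nabla M|^2$ — i.e.\ no energy is lost — which together with weak convergence and the uniform positivity of $\xi$ upgrades $\nabla M_N\to\nabla M$ to strong $L^2(Q_T)$ convergence. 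This is precisely what is needed to pass to the limit in the three quadratic-in-$\nabla M$ terms: the Korteweg force $\dvr(\xi(\phi)\nabla M)\nabla M$ in the momentum equation and the term $\xi'(\phi)\tfrac{|\nabla M|^2}2$ in the chemical-potential equation. With the nonlinear terms handled, passing to the limit in the remaining linear and weakly continuous terms is routine; the time-continuity (weak in the stated spaces) and the attainment of initial data \eqref{InitDataAtt} follow from the uniform bounds, the equations, and lower semicontinuity of the energy, completing the proof of Theorem~\ref{Thm:Main}.
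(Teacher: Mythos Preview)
Your overall strategy---Rothe time discretization, fixed-point argument for each step, discrete energy estimate, interpolants, compactness, limit passage---matches the paper's precisely. There is, however, a genuine gap in your treatment of the discrete energy estimate.

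You write that with a direct implicit discretization the coupling terms ``cancel pairwise by design of the model'' and that the identity $a(a-b)=\tfrac12(a^2-b^2)+\tfrac12(a-b)^2$ handles the time-difference terms. This is fine at the continuous level but fails discretely for the double-well potentials: the map $M\mapsto\tfrac14(|M|^2-1)^2$ is not convex, so $(M^k-M^{k-1})\cdot(|M^k|^2-1)M^k$ does \emph{not} dominate the energy increment, and the scheme as you describe it is not unconditionally stable. The paper resolves this by a convex-splitting discretization, replacing $(|M|^2-1)M$ by $|M_{k+1}|^2M_{k+1}-M_k$ and $(\phi^2-1)\phi$ by $\phi_{k+1}^3-\phi_k$ (implicit convex part, explicit concave part), together with the vector-valued algebraic inequality of Lemma~\ref{algebriclem}, which is singled out as one of the key technical observations. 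Likewise, the $\xi'(\phi)$ terms in the discrete $\mu$-equation do not cancel against the discrete exchange-energy increment unless $\xi'$ is replaced by the secant slope $H_0(\phi_{k+1},\phi_k)=(\xi(\phi_{k+1})-\xi(\phi_k))/(\phi_{k+1}-\phi_k)$; only then does $H_0\cdot(\phi_{k+1}-\phi_k)\tfrac{|\nabla M_{k+1}|^2}{2}$ produce exactly $(\xi(\phi_{k+1})-\xi(\phi_k))\tfrac{|\nabla M_{k+1}|^2}{2}$, which telescopes with the gradient contribution from the $M$-equation. Without these two devices the discrete energy inequality \eqref{discreteestimate} is not available, and the uniform bounds you list cannot be justified. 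Since you invoke Section~\ref{sec2} for existence of the discrete iterates, you must also use its specific discretization in the energy argument; the naive scheme you sketch is a different (unstable) scheme.

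Your argument for strong convergence of $\nabla M_N$ is essentially the paper's: the paper subtracts the limit $M$-equation from the approximate one, tests by $M^N-M$, and shows the right-hand side of the resulting identity vanishes, yielding $\int_{Q_T}\xi(\phi_h^N)|\nabla(M^N-M)|^2\to 0$ directly (see \eqref{intidentitydifferencerewritten}). Your ``norm convergence plus weak convergence'' phrasing is an equivalent reformulation of the same monotone-structure idea and would also work.
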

	
	\noindent\textbf{Ideas of proof.} 
	The proof of Theorem~\ref{Thm:Main} is given in Sections~\ref{sec2} and \ref{sec3}. It relies on a time discretization scheme in order to construct solutions to suitably chosen approximative problems.
	In view of other works in which the coupling of quantities involved in the system is similar to \eqref{diffviscoelastic*} the time discretization scheme can be easily adopted in order to tackle the problem. Considering a sequence $0=t_0<t_1<\ldots<t_k<t_{k+1}<\ldots$, $k\in\eN_0$ we find a solution $(v_{k+1},M_{k+1},\mu_{k+1},\phi_{k+1})$ to a stationary problem \eqref{timediscretesystem} at the point $t_{k+1}$ employing the components $(v_k,M_k,\phi_k)$ of the solution to the same problem at the time $t_k$. 
	
	A crucial step in the development of the proof was to introduce a discretized version of \eqref{diffviscoelastic*} which is unconditionally stable or, in other words, which yields a discrete energy estimate of the form \eqref{discreteestimate}. For example, we discretize the term $\frac{\xi(\phi)}{\alpha^{2}}(|M|^{2}-1)M$ appearing in \eqref{diffviscoelastic*}$_{3}$ as
	\begin{align} \label{discr-good} 
	\frac{\xi(\phi)}{\alpha^{2}}(|M|^{2}-1)M\approx \frac{{\xi(\phi_{k})}}{\alpha^{2}}(|M_{k+1}|^{2}M_{k+1}-M_{k}),
	\end{align}
	cf.\ \eqref{timediscretesystem}$_{3}$. Such a discretization is inspired by the convex splitting scheme used in \cite{yangmao} for scaler valued functions. We adapted such a discretization ($i.e.$ \eqref{discr-good}) in vector settings (since $M\in\mathbb{R}^{3}$). The inequality \eqref{discr-good} along with the inequality 
	\begin{align*}
	    \frac{1}{4}\bigl(|A|^{2}-1\bigr)^{2}-\frac{1}{4}\bigl(|B|^{2}-1\bigr)^{2}+\frac{1}{4}\bigl(|A|^{2}-|B|^{2}\bigr)^{2}+\frac{1}{2}|A\cdot(A-B)|^{2}
		+\frac{1}{2}|A-B|^{2}\leqslant (A-B)\cdot \bigl(|A|^{2}A-B\bigr) 
	\end{align*}
	for any $A,B\in \mathbb{R}^{3}$, see Lemma~\ref{algebriclem}, plays a crucial role in the derivation of the desired energy-like estimate \eqref{discreteestimate}.
	The latter inequality is one of the key observations of the present article despite its easy proof. It is inspired by the convex splitting scheme used in \cite{yangmao} for scalar-valued functions, in which case an equality of the form \eqref{AlgIneq2} is obtained.
	
	To solve the time discrete system \eqref{timediscretesystem}, we show that the existence of its solution is equivalent to the existence of a fixed point of a certain nonlinear operator and the existence of this fixed point is proven via the Leray-Schauder fixed point theorem.
	
	Using the information about the existence of solutions at nodal points $\{t_k\}$ we define piecewise constant interpolants in Section \ref{sec3} and show that these interpolants approximate $(v,M,\phi,\mu)$ which solves the weak formulations \eqref{WeakForm}. To this end, we first show an energy type estimate satisfied by the interpolants and recover weak type convergences. Next in order to pass to the limit in the nonlinear terms we need to recover strong convergences of the interpolants, which is achieved by applying an Aubin-Lions type lemma. The key to pass to the limit in the weak formulation, especially to the approximates corresponding to the terms $\mbox{div}(\xi(\phi)\nabla M)$ (cf.\ \eqref{diffviscoelastic*}$_{1}$) and $\xi'(\phi)\frac{|\nabla M|^{2}}{2}$ (cf.\ \eqref{diffviscoelastic*}$_{5}$) is to obtain the strong convergence of $\{\nabla M^{N}\}$, where $M^{N}$ is the interpolant approximating $M$. This is achieved by exploiting the monotone structure of the magnetization equation. Finally, we show that the initial data are attained in a strong sense, which then yields global existence of weak solutions to system~\eqref{diffviscoelastic*}.

	\section{Derivation of the model and related discussion} \label{sec:derivation}

In this section we derive a mathematical model for the flow of diffuse interface Newtonian incompressible magnetic fluids, which leads to system~\eqref{diffviscoelastic*} for smooth enough fields. We follow an energetic variational approach, which has been applied to various materials in the literature, cf., e.g., \cite{liucomplex} (on the modeling of elastic complex fluids),  \cite{liusun} (on nematic liquid crystal flows) and \cite{liubenesova} (on magnetoviscoelastic flows). We refer to \cite{fengliu, GigaKirshteinLiu2017} for related reviews and the references included therein.
The energetic variational approach is based on the so-called energy dissipation law, the least action principle, the maximum dissipation principle and Newton's force balance law. The systems of partial differential equations derived are phrased in Eulerian coordinates, which is particularly useful in interphase problems.

The energy dissipation law reads $\tfrac{d}{dt} \mathcal{E}_{tot} = -\mathcal{D}$, where $\mathcal{E}_{tot}$ denotes the total energy functional and $\mathcal{D}$ the dissipation functional. The total energy is given as a sum of the kinetic energy $\mathcal{K}= \int_\Omega \tfrac12 |u|^2$, the mixing energy $\mathcal{E}_{mix}$ and $\mathcal{E}_{mag}$, which models magnetic effects in the fluids under consideration. The mixing energy is defined by 
	\begin{equation}\label{freeenergy-mix}
	\begin{split}
	\mathcal{E}_{mix}(\phi)= \frac{\eta}{2}\int_{\Omega}|\nabla\phi|^{2}+\frac{1}{4\eta}\int_{\Omega}(|\phi|^{2}-1)^{2},
		\end{split}
	\end{equation}
where $\phi$ is as before the order parameter and $\eta >0$. The gradient part (or the regularization part) is the approximation of the interfacial energy and the term $\frac{1}{4\eta}(|\phi|^{2}-1)^{2}$ is the usual Ginzburg–Landau double-well potential penalizing the deviation of $|\phi|$ from 1. Physically it is reasonable to consider that $\phi$ takes values $-1$ and $+1$ for the unmixed fluids; it is further expected that $\phi\in[-1,+1]$ once the partial mixing occurs. Indeed, as $\phi$ solves a fourth order parabolic equation (as we will derive the Cahn-Hilliard dynamics), no comparison principle is available and furthermore we are not using a singular potential (one recalls that we are working with a Ginzburg-Landau potential of polynomial type) we will not be able to guarantee that $\phi\in[-1,+1].$ For a related discussion on the mixing energy $\mathcal{E}_{mix}$ and the sharp interface limit $\eta\longrightarrow 0$ we refer the readers to \cite{shenliu}.\\
The magnetic contribution $\mathcal{E}_{mag}$ to the energy that we consider here is motivated from micromagnetics, cf., e.g., the recent review \cite{DiFratta-etal2019} and references therein. However, for the time being, we only consider the so-called exchange energy contribution, which reflects the tendency of the magnetization to align in one direction. In our setting, also the order parameter enters into the magnetic energy contributions, which allows to study a bi-fluid model with the fluids having different magnetic behavior. In micromagnetics one takes the saturation condition into account, which means that the modulus of the magnetization is constant. As typical in the mathematical literature we set this (saturation) constant equal to one. Here we take the saturation condition into account by considering a penalization term which punishes the deviation of $|M|$ from 1, cf., e.g., \cite[Section 1.2]{Kurzke}, \cite{chipotshafrir} or \cite{anjazab}. Then $\mathcal{E}_{mag}$ reads 
\begin{equation*}
	\begin{split}
	&\mathcal{E}_{mag}(\phi,M)=\int_{\Omega}\xi(\phi)\frac{|\nabla M|^{2}}{2}+\frac{1}{4\alpha^{2}}\int_{\Omega}\xi(\phi)(|M|^{2}-1)^{2},
	\end{split}
	\end{equation*}
where $\alpha>0$ is used to control the strength of the penalization. Here we include a coupling with the order parameter $\phi$ through the factor $\xi(\phi)$ with $\xi$ as in \eqref{XiAssum}, see also the example given there. This function replaces the degenerate function $\frac{1+\phi}{2}$ that is considered in, e.g., \cite{yuefeng} in a related situation for liquid crystals. In \cite{shenyang} yet another degenerate function is introduced, viz $\left(\frac{1+\phi}{2}\right)^{2}$, which is used in place of the non degenerate choice $\xi(\phi)$. The analysis of the current article only allows us to deal with a non degenerate function $\xi(\cdot)$ as in \eqref{XiAssum}, cf., e.g., the proof of the strong convergence \eqref{MNStrongC}, where we use the non degeneracy of the function $\xi(\cdot).$ \\
The dissipation functional that we consider in this article represents the assumed viscous properties of the fluids and reads $\displaystyle\mathcal{D} = \int_\Omega \nu\left|\frac{\nabla v + (\nabla v)^\top}{2} \right|^2$ with the viscosity constant $\nu>0$.

The action functional is given by $\displaystyle\int_0^t  \mathcal{K} - \left(\mathcal{E}_{mag}+\mathcal{E}_{mix}\right)$. Its variations with respect to the flow map (also sometimes refered to as the displacement) yields the evolution equation for the linear momentum, where, based on Newton's force balance, we add the dissipative force to the right hand side of the momentum equation; the dissipative force is obtained from a variation of $\mathcal{D}$ with respect to the rate of the flow, i.e. with respect to the velocity. Due to the assumption of incompressibility and zero boundary conditions, the symmetric gradient of $v$ can be rewritten to obtain the term $\nu \Delta v$ in the momentum equation.\\
The variation of the kinetic energy with respect to the flow map is standard and provides the terms $\tder v+(v\cdot\nabla)v +\nabla \pi$, where $\pi$ is the fluid pressure. For the variation of $\mathcal{E}_{mag}+\mathcal{E}_{mix}$ with respect to the flow map, we follow, e.g., \cite[Eqn.~(13)--(17)]{yuefeng} and assume that the order parameter and the magnetization are convected with the material point, i.e., there is no diffusion in $\phi$ and $M$. Hence, when applying the chain rule, the calculation reduces to a derivation of the stress tensor 
	\begin{equation*}
	\begin{split}
	\mathbb{S}  = - \frac{\partial \mathcal{E}_{mag}}{\partial \nabla M} \odot \nabla M  -\frac{\partial \mathcal{E}_{mix}}{\partial \nabla \phi} \otimes \nabla \phi
	  = -\xi(\phi)(\nabla M\odot\nabla M)-\eta(\nabla\phi\otimes\nabla\phi), 
	\end{split}
	\end{equation*}
	where we used the notation $(\nabla M\odot\nabla M)_{ij}=\sum_{k=1}^{3}(\nabla_{i}M_{k})(\nabla_{j}M_{k})$ and $(\nabla\phi\otimes\nabla\phi)_{ij}=\nabla_{i}\phi\nabla_{j}\phi$.\\
Hence the variation of the action functional together with the dissipative force contribution leads to the balance equation of linear momentum, which is given by the incompressible Navier-Stokes equations with a modified expression of the stress tensor due to the combined effect of the mixing and magnetic energy contributions: 
	\begin{align*}
	\begin{array}{lll}
	&\tder v+(v\cdot\nabla)v 
	= \nu \Delta v - \nabla \pi + \dvr\mathbb{S}
	&\mbox{ in }Q_{T},\\
	&\dvr v=0 &\mbox{ in } Q_{T}.
	\end{array}
 \end{align*}


Next, variations of the energy functionals with respect to the other physical fields, viz the magnetization and the order parameter, are calculated and applied in a gradient flow approach. This yields the evolution equations for these fields in the Eulerian framework. We assume that the magnetization vector follows the transport $M(x(t)) = M(x(X,t),t) = M_0(X)$ for any $t \geq 0$, $x=x(X,t)$ and  $M_0$ being the initial condition. Under the assumption that $\partial_n M =0$ on $\Sigma_T$, the evolution equation for the magnetization vector reads
	\begin{equation*}
    \partial_{t}M+(v\cdot\nabla)M=-\frac{\delta\mathcal{E}_{tot}}{\delta M}=\dvr(\xi(\phi)\nabla M)-\frac{\xi(\phi)}{\alpha^{2}}(|M|^{2}-1)M\qquad\mbox{ in }
 Q_{T},
	\end{equation*}
	where $\displaystyle\frac{\delta\mathcal{E}_{tot}}{\delta M}$ denotes the variational derivative of the total energy functional $\mathcal{E}_{tot}$ with respect to $M$. 
	
	Concerning the derivation of the governing equation for $\phi,$ we assume a generalized Fick’s law, i.e., the mass flux be proportional to the
	gradient of the chemical potential (see, e.g., \cite{Cahn,shenliu} for details), and the physically reasonable boundary condition  $\partial_{n}\phi=\partial_{n}\mu=0$. We then obtain the following Cahn-Hilliard equations:
	\begin{equation*}
	\begin{array}{lll}
	& \partial_{t}\phi+(v\cdot\nabla)\phi=\Delta \mu&\qquad\mbox{in } Q_{T}\\
	\end{array}
	\end{equation*}
	with $$\mu=\frac{\delta\mathcal{E}_{tot}}{\delta \phi} 
=-\eta\Delta\phi+\frac{1}{\eta}(|\phi|^{2}-1)\phi+\xi'(\phi)\frac{|\nabla M|^{2}}{2}+\frac{\xi'(\phi)}{4\alpha^{2}}(|M|^{2}-1)^{2}\,\,\mbox{in}\,\, Q_{T}.$$
In summary, the physically reasonable boundary conditions solved by the unknowns are
$$v=0,\quad\partial_{n}M=0,\quad \partial_{n}\phi=\partial_{n}\mu=0\quad\mbox{on }\Sigma_{T}.$$
Summarizing the above equations and adding boundary and initial conditions, we thus obtain the system
\begin{equation}\label{diffviscoelastic}
	\begin{alignedat}{2}
	\tder v+(v\cdot\nabla)v+\nabla \pi -\nu\Delta v=&-\dvr(\xi(\phi)(\nabla M\odot\nabla M))-\eta\dvr(\nabla\phi\otimes\nabla\phi)&&\mbox{ in }Q_{T},\\
	\dvr v=&0&&\mbox{ in } Q_{T},\\
	\partial_{t}M+(v\cdot\nabla)M=&\dvr(\xi(\phi)\nabla M)-\frac{\xi(\phi)}{\alpha^{2}}(|M|^{2}-1)M&&\mbox{ in }Q_{T},\\
	\partial_{t}\phi+(v\cdot\nabla)\phi=&\Delta \mu&&\mbox{ in }Q_{T},\\
	\mu=&-\eta\Delta\phi+\frac{1}{\eta}(|\phi|^{2}-1)\phi+\xi'(\phi)\frac{|\nabla M|^{2}}{2}+\frac{\xi'(\phi)}{4\alpha^{2}}(|M|^{2}-1)^{2}&&\mbox{ in } Q_{T},\\
	v=0,\ \partial_{n}M=&0,\ \partial_{n}\phi=\partial_{n}\mu=0&&\mbox{ on }\Sigma_{T},\\
	(v,M,\phi)(\cdot,0)=&(v_{0},M_{0},\phi_{0})&& \mbox{ in } \Omega.
	\end{alignedat}
	\end{equation}

Next we will show that system~\eqref{diffviscoelastic*} is obtained from system~\eqref{diffviscoelastic} for smooth enough fields. This step is inspired by \cite{AbDeGa113} and \cite{shenyang}. The reason for rewriting the system is purely mathematical. From an analytical perspective, some nonlinearities in the momentum equation  \eqref{diffviscoelastic}$_{1}$ cause difficulties when passing to the limit in an existence proof. By rewriting the system as below, some problematic nonlinearities can be replaced by terms with better compactness properties, cf.\ \eqref{addedexpression} below.

To reformulate the momentum equation \eqref{diffviscoelastic}$_{1}$, we observe the following pointwise identities
	\begin{equation}\label{identities}
	\begin{split}
	&\eta\dvr(\nabla\phi\otimes\nabla\phi)=\eta\Delta\phi\nabla\phi+\eta\nabla\left(\frac{|\nabla\phi|^{2}}{2}\right),\\
	& \dvr(\xi(\phi)(\nabla M\odot\nabla M))=\dvr(\xi(\phi)\nabla M)\nabla M+\xi(\phi)\nabla\left(\frac{|\nabla M|^{2}}{2}\right).
	\end{split}
	\end{equation}
	Using \eqref{diffviscoelastic}$_{5}$ together with \eqref{identities}, we obtain for smooth enough fields
	\begin{equation}\label{addedexpression}
	\begin{split}
	 \dvr(\xi(\phi)(\nabla M\odot\nabla M))+\eta\dvr(\nabla\phi\otimes\nabla\phi)=& -\mu\nabla\phi+\frac{1}{4\eta}\nabla(|\phi|^{2}-1)^{2}+\eta\nabla\left(\frac{|\nabla\phi|^{2}}{2}\right)\\
	&+\nabla\left(\xi(\phi)\frac{|\nabla M|^{2}}{2}\right)+\nabla\left(\frac{\xi(\phi)}{4\alpha^{2}}(|M|^{2}-1)^{2}\right)\\
	&-\frac{\xi(\phi)}{\alpha^{2}}((|M|^{2}-1)M)\nabla M+\dvr(\xi(\phi)\nabla M)\nabla M.
	\end{split}
	\end{equation}
	The terms on the right hand side of \eqref{diffviscoelastic}$_{1}$ no longer appear in \eqref{diffviscoelastic*}. They are replaced by the first, sixth and seventh terms appearing in the right hand side of \eqref{addedexpression}. We note that the remaining terms of \eqref{addedexpression} can be included in the new pressure because of their gradient structure, which finishes the derivation of \eqref{diffviscoelastic*}.\\
    We remark that the new term $\mu\nabla\phi$ certainly has better compactness properties compared to $\dvr(\nabla\phi\otimes\nabla\phi)$. Later, this observation will allow to pass to the limit in the nonlinear terms while proving existence of weak solutions to \eqref{diffviscoelastic*}. 
	 In the next section we consider a time discrete variant of system \eqref{diffviscoelastic*}, which will help to conclude the existence proof.

\section{Existence of weak solutions to a time discrete model}\label{sec2}
This section is devoted to the existence proof for weak solutions to a time discrete problem corresponding to system~\eqref{diffviscoelastic*}. In that direction let $h>0$ be a positive constant and
let 
\begin{equation}\label{assumk}
v_{k}\in \LND,\,\,M_{k}\in W^{2,2}_{n}(\Omega),\,\,\phi_{k}\in W^{2,2}_{n}(\Omega)
\end{equation} 
be the information at $t_{k},$ $k\in\mathbb{N}_{0}.$
The quadruple $(v_{k+1},M_{k+1},\phi_{k+1},\mu_{k+1}),$ solution at $t_{k+1},$ is determined as a weak solution to the following system
\begin{equation} \label{timediscretesystem}
\begin{alignedat}{2}
 \frac{v_{k+1}-v_{k}}{h}+(v_{k+1}\cdot\nabla)v_{k+1}+\nabla p_{k+1}-\mu_{k+1}\nabla\phi_{k}=&
-{\xi(\phi_{k})}(|M_{k+1}|^{2}M_{k+1}-M_{k})\nabla M_{k+1}&&\\
&-\dvr(\xi(\phi_{k})\nabla M_{k+1})\nabla M_{k+1}+\nu\Delta v_{k+1}&&\text{ in }\Omega\\
 \dvr v_{k+1}=&0 &&\text{ in }\Omega\\
\frac{M_{k+1}-M_{k}}{h}+(v_{k+1}\cdot\nabla)M_{k+1}=&\dvr(\xi(\phi_{k})\nabla M_{k+1}) &&\\
&-\frac{{\xi(\phi_{k})}}{\alpha^{2}}(|M_{k+1}|^{2}M_{k+1}-M_{k})&&\text{ in }\Omega\\
\frac{\phi_{k+1}-\phi_{k}}{h}+(v_{k+1}\cdot\nabla)\phi_{k}=&\Delta\mu_{k+1}&&\text{ in }\Omega\\
\mu_{k+1}+\eta\Delta\phi_{k+1}-\frac{1}{\eta}(\phi^{3}_{k+1}-\phi_{k})=&H_{0}(\phi_{k+1},\phi_{k})\frac{|\nabla M_{k+1}|^{2}}{2}&&\\
&+\frac{1}{4\alpha^{2}}{H_{0}(\phi_{k+1},\phi_{k})}(|M_{k+1}|^{2}-1)^{2}&&\text{ in }\Omega\\
 v_{k+1}=\partial_{n}M_{k+1}=\partial_{n}\phi_{k+1}&=\partial_{n}\mu_{k+1}=0&&\text{ on }\partial\Omega
\end{alignedat}
\end{equation}
where $H_{0}:\mathbb{R}\times\mathbb{R}\to \mathbb{R}$ is defined as
\begin{equation}\label{H0}
\ H_{0}(a,b)=\begin{cases}
\frac{\xi(a)-\xi(b)}{a-b} &\mbox{if}\qquad a\neq b,\\[4.mm]
\xi'(b) & \mbox{if}\qquad a= b.
\end{cases}
\end{equation}

\enlargethispage{-\baselineskip}

We define the notion of weak solution to the time discretized problem \eqref{timediscretesystem} as follows
\begin{mydef}\label{defweaksolta}[Weak solution to the problem \eqref{timediscretesystem}]
	Let \eqref{assumk} hold. The quadruple 
	\begin{equation}\label{regularityk1}
	\begin{array}{l}
	(v_{k+1},M_{k+1},\phi_{k+1},\mu_{k+1})\in \WND\times W^{2,2}_{n}(\Omega)\times W^{2,2}_{n}(\Omega)\times W^{1,2}(\Omega),
	\end{array}
	\end{equation}
	is a weak solution to system~\eqref{timediscretesystem} if the following identities are true
	\begin{equation}\label{identity1}
	\begin{split}
	&\int_{\Omega} \frac{v_{k+1}-v_{k}}{h}\cdot\widetilde\psi_{1}+\int_{\Omega}(v_{k+1}\cdot\nabla)v_{k+1}\cdot\widetilde\psi_{1}+\int_{\Omega}\nabla\mu_{k+1}\phi_{k}\cdot\widetilde\psi_{1}\\
	&-\int_{\Omega}\left(\frac{{\xi(\phi_{k})}}{\alpha^{2}}(|M_{k+1}|^{2}M_{k+1}-M_{k})\nabla M_{k+1}\right)\cdot\widetilde\psi_{1}+\int_{\Omega}\left(\mathrm{div}(\xi(\phi_{k})\nabla M_{k+1})\nabla M_{k+1}\right)\cdot\widetilde\psi_{1}\\
	&=-\nu\int_{\Omega}\nabla v_{k+1}\cdot\nabla\widetilde\psi_{1}
	\end{split}
	\end{equation}
	for all $\widetilde\psi_{1}\in \WND,$\\
	\begin{equation}\label{identity2}
	\begin{split}
	&\int_{\Omega}\frac{M_{k+1}-M_{k}}{h}\cdot\widetilde\psi_{2}+\int_{\Omega}(v_{k+1}\cdot\nabla)M_{k+1}\cdot\widetilde\psi_{2}\\
	& =\int_{\Omega}\left(\dvr\left(\xi(\phi_{k})\nabla M_{k+1}\right)-\frac{\xi(\phi_{k})}{\alpha^{2}}(|M_{k+1}|^{2}M_{k+1}-M_{k})\right)\cdot\widetilde\psi_{2}
	\end{split}
	\end{equation}
	for all $\widetilde\psi_{2}\in L^2(\Omega),$\\
	\begin{equation}\label{identity3}
	\begin{split}
	&\int_{\Omega} \frac{\phi_{k+1}-\phi_{k}}{h}\cdot\widetilde\psi_{3}+\int_{\Omega}(v_{k+1}\cdot\nabla)\phi_{k}\cdot\widetilde\psi_{3}=-\int_{\Omega}\nabla\mu_{k+1}\cdot\nabla\widetilde\psi_{3}
	\end{split}
	\end{equation}
	and 
	\begin{equation}\label{identity4}
	\begin{split}
	&\int_{\Omega}\mu_{k+1}\widetilde\psi_{3}-\int_{\Omega} H_{0}(\phi_{k+1},\phi_{k})\frac{|\nabla M_{k+1}|^{2}}{2}\widetilde\psi_{3}-\int_{\Omega}\frac{H_{0}(\phi_{k+1},\phi_{k})}{4\alpha^{2}}(|M_{k+1}|^{2}-1)^{2}\widetilde\psi_{3}\\
	&=\eta\int_{\Omega}\nabla\phi_{k+1}\cdot\nabla\widetilde\psi_{3}+\frac{1}{\eta}\int_{\Omega}(\phi^{3}_{k+1}-\phi_{k})\widetilde\psi_{3}
	\end{split}
	\end{equation}
	for all $\widetilde\psi_{3}\in W^{1,2}(\Omega).$
\end{mydef}
Next we state a crucial estimate that is inspired by the convex splitting scheme used in \cite{yangmao}. In \cite{yangmao} the scalar version \eqref{AlgIneq2} is stated whereas we prove here the inequality \eqref{AlgIneq1} for vector-valued functions. This estimate proves to be efficient to deal with the term $(M_{k+1}-M_{k})\cdot\frac{\xi(\phi_{k})}{\alpha^{2}}(|M_{k+1}|^{2}M_{k+1}-M_{k}) $ when showing the inequality \eqref{discreteestimate}.
	\begin{lemma}\label{algebriclem}
		Let $A,B\in \mathbb{R}^{3}$ and $a,b\in \mathbb{R}.$ The following relations are true
		\begin{align}
		& \frac{1}{4}\bigl(|A|^{2}-1\bigr)^{2}-\frac{1}{4}\bigl(|B|^{2}-1\bigr)^{2}+\frac{1}{4}\bigl(|A|^{2}-|B|^{2}\bigr)^{2}+\frac{1}{2}|A\cdot(A-B)|^{2} +\frac{1}{2}|A-B|^{2}\nonumber\\
		&\leqslant (A-B)\cdot \bigl(|A|^{2}A-B\bigr)\label{AlgIneq1},\\
		& \frac{1}{4}(a^{2}-1)^{2}-\frac{1}{4}(b^{2}-1)^{2}+\frac{1}{4}(a^{2}-b^{2})^2+\frac{1}{2}(a^{2}-ab)^{2}+\frac{1}{2}(a-b)^{2}\nonumber\\
		&= (a-b)(a^{3}-b)\label{AlgIneq2}.
		\end{align}
	\end{lemma}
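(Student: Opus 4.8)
The plan is to establish the scalar identity \eqref{AlgIneq2} first, since it is an exact equality and the computation is purely algebraic, and then to leverage it — together with an additional nonnegative correction term — to obtain the vector inequality \eqref{AlgIneq1}. For \eqref{AlgIneq2}, I would simply expand both sides as polynomials in $a,b$. The right-hand side is $(a-b)(a^3-b) = a^4 - a^3 b - ab + b^2$. On the left, I would expand $\tfrac14(a^2-1)^2 - \tfrac14(b^2-1)^2 = \tfrac14 a^4 - \tfrac12 a^2 - \tfrac14 b^4 + \tfrac12 b^2$, then $\tfrac14(a^2-b^2)^2 = \tfrac14 a^4 - \tfrac12 a^2 b^2 + \tfrac14 b^4$, then $\tfrac12(a^2-ab)^2 = \tfrac12 a^4 - a^3 b + \tfrac12 a^2 b^2$, and $\tfrac12(a-b)^2 = \tfrac12 a^2 - ab + \tfrac12 b^2$. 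Summing these, the $b^4$ terms cancel, the $a^2b^2$ terms cancel, the $a^2$ terms cancel, and one is left with $a^4 - a^3 b - ab + b^2$, which matches the right-hand side; hence \eqref{AlgIneq2} holds with equality.

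For the vector inequality \eqref{AlgIneq1}, the key is to decompose the right-hand side $(A-B)\cdot(|A|^2 A - B)$. I would write $|A|^2 A - B = |A|^2(A-B) + (|A|^2 - 1)B + (B - B) $; more cleanly, $|A|^2 A - B = |A|^2 A - |A|^2 B + |A|^2 B - B = |A|^2 A\cdot(A-B)\text{-type terms}$, so that
\begin{equation*}
(A-B)\cdot(|A|^2 A - B) = |A|^2 |A-B|^2 + (|A|^2 - 1)\, B\cdot(A-B).
\end{equation*}
Then I would use the polarization-type identity $B\cdot(A-B) = \tfrac12(|A|^2 - |B|^2) - \tfrac12|A-B|^2$ together with $|A|^2|A-B|^2 = |A\cdot(A-B)|^2 + |A|^2|A-B|^2 - |A\cdot(A-B)|^2$, the last difference being $\geq 0$ by Cauchy--Schwarz (since $|A\cdot(A-B)|^2 \leq |A|^2|A-B|^2$). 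Substituting $B\cdot(A-B)$ and writing $|B|^2 = |A|^2 - (|A|^2-|B|^2)$ where needed, the term $(|A|^2-1)\cdot\tfrac12(|A|^2-|B|^2)$ should reorganize, via the elementary identity $\tfrac14(|A|^2-1)^2 - \tfrac14(|B|^2-1)^2 = \tfrac12(|A|^2-|B|^2)\big((|A|^2-1) - \tfrac12(|A|^2-|B|^2)\big) = \tfrac12(|A|^2-1)(|A|^2-|B|^2) - \tfrac14(|A|^2-|B|^2)^2$, into exactly the first three terms on the left of \eqref{AlgIneq1}. Collecting everything, the remaining slack is precisely $|A|^2|A-B|^2 - |A\cdot(A-B)|^2 \geq 0$ plus a nonnegative multiple of $|A-B|^2$, and \eqref{AlgIneq1} follows.

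The main obstacle is purely bookkeeping: matching the cross term $(|A|^2-1)B\cdot(A-B)$ against the combination $\tfrac14(|A|^2-1)^2 - \tfrac14(|B|^2-1)^2 + \tfrac14(|A|^2-|B|^2)^2$ on the left requires choosing the right splitting of $B\cdot(A-B)$ and being careful that the leftover is genuinely nonnegative — this is where the Cauchy--Schwarz bound $|A\cdot(A-B)|^2\leq|A|^2|A-B|^2$ is essential and, indeed, is the only inequality used (the scalar case collapses to equality precisely because in dimension one Cauchy--Schwarz is an equality, $|a(a-b)|^2 = a^2(a-b)^2$). I would double-check the final assembly by specializing $A,B$ to parallel vectors to confirm it reduces to \eqref{AlgIneq2}, which serves as a built-in consistency check on the algebra.
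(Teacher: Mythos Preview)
Your proposal is correct and amounts to the same argument as the paper: the paper expands the left-hand side of \eqref{AlgIneq1} to show it equals $(A-B)\cdot(|A|^2A-B) - \tfrac12|A|^2|B|^2 + \tfrac12|A\cdot B|^2$ and then applies Cauchy--Schwarz to the pair $(A,B)$, while you organize the computation from the right-hand side and apply Cauchy--Schwarz to the pair $(A,A-B)$ --- but the two slack terms coincide identically, since $|A|^2|A-B|^2 - |A\cdot(A-B)|^2 = |A|^2|B|^2 - (A\cdot B)^2$. Your direct expansion for \eqref{AlgIneq2} is also fine (the paper simply cites this identity from \cite{yangmao}); one small correction to your bookkeeping is that there is no extra ``nonnegative multiple of $|A-B|^2$'' left over in the final assembly --- the defect is exactly the Cauchy--Schwarz gap and nothing more.
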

	\begin{proof}
		The proof involves straight forward algebraic manipulation. One expands the left hand side of \eqref{AlgIneq1} to furnish:
		\begin{equation*}
		\begin{split}
		& \frac{1}{4}\bigl(|A|^{2}-1\bigr)^{2}-\frac{1}{4}\bigl(|B|^{2}-1\bigr)^{2}+\frac{1}{4}\bigl(|A|^{2}-|B|^{2}\bigr)^{2}+\frac{1}{2}|A\cdot(A-B)|^{2}+\frac{1}{2}|A-B|^{2}\\
		&= (A-B)\cdot (|A|^{2}A-B)-\frac{1}{2}|A|^{2}|B|^{2}+\frac{1}{2}|A\cdot B|^{2}.
		\end{split}
		\end{equation*}
		Since
		$$|A\cdot B|\leqslant |A||B|$$
		by the Cauchy-Schwartz inequality, \eqref{AlgIneq1} follows.\\
		Identity \eqref{AlgIneq2} is taken from \cite{yangmao}.
		\end{proof}
\begin{thm}\label{existenceweaksoldiscrete}[Existence of weak solution to the problem \eqref{timediscretesystem}]
	Let \eqref{XiAssum} and \eqref{assumk} hold. Then there exists a quadruple $(v_{k+1},M_{k+1},\phi_{k+1},\mu_{k+1})$ which satisfies \eqref{regularityk1} and solves the integral identities \eqref{identity1}--\eqref{identity4}. Moreover, the following discrete version of the energy estimate holds
	\begin{equation}\label{discreteestimate}
	\begin{split}
	&  E_{tot}(v_{k+1},M_{k+1},\phi_{k+1})+h\nu\int_{\Omega}|\nabla v_{k+1}|^{2}
	+h\int_{\Omega}|\nabla\mu_{k+1}|^{2}\\ &+h\int_{\Omega}\left|\mathrm{div}(\xi(\phi_{k})\nabla M_{k+1})-\frac{\xi(\phi_{k})}{\alpha^{2}}(|M_{k+1}|^{2}M_{k+1}-M_{k})\right|^{2}
	 \leqslant E_{tot}(v_{k},M_{k},\phi_{k}),
	\end{split}
	\end{equation} 
	where
	\begin{equation}\label{defEtot}
	\begin{split}  
	E_{tot}(v,M,\phi)=\frac{1}{2}\int_{\Omega}|v|^{2}+\frac{1}{2}\int_{\Omega}\xi(\phi)|\nabla M|^{2}+\frac{1}{4\alpha^{2}}\int_{\Omega}\xi(\phi)(|M|^{2}-1)^{2}
	+\frac{\eta}{2}\int_{\Omega}|\nabla\phi|^{2}+\frac{1}{4\eta}\int_{\Omega}(\phi^{2}-1)^{2}.
	\end{split}
	\end{equation}
\end{thm}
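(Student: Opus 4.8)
The plan is to prove Theorem~\ref{existenceweaksoldiscrete} in two stages: first establish existence of a weak solution to \eqref{timediscretesystem} via a fixed point argument, then derive the discrete energy estimate \eqref{discreteestimate} by testing the weak formulation with the natural test functions. For the existence part, I would reformulate the coupled stationary system as a fixed point problem for a compact operator $\mathcal{T}$ on a suitable Banach space. Given a provisional magnetization and order parameter, the Navier--Stokes-type equation \eqref{identity1} for $v_{k+1}$ (with the linearization $(v_{k+1}\cdot\nabla)v_{k+1}$ handled via the standard monotonicity/coercivity of the Stokes operator plus the fact that the trilinear term vanishes on the diagonal after integration by parts) is solvable; likewise \eqref{identity2}--\eqref{identity3} form a linear elliptic system for $(M_{k+1},\mu_{k+1},\phi_{k+1})$ once the nonlinear terms $|M_{k+1}|^2M_{k+1}$ and $\phi_{k+1}^3$ are either frozen or treated via monotonicity (the cubic terms $u\mapsto|u|^2u$ and $s\mapsto s^3$ are monotone, so the corresponding elliptic problems are solvable by the theory of monotone operators). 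One then checks that $\mathcal{T}$ maps a large ball into itself (using the a priori bound that the energy estimate will provide), is continuous, and is compact thanks to the compact embeddings $W^{2,2}\hookrightarrow\hookrightarrow W^{1,2}$ and $W^{1,2}\hookrightarrow\hookrightarrow L^q$; the Leray--Schauder fixed point theorem then yields a solution, and elliptic regularity ($\partial\Omega\in C^{1,1}$, homogeneous Neumann data) upgrades $M_{k+1},\phi_{k+1}$ to $W^{2,2}_n(\Omega)$ as claimed in \eqref{regularityk1}.

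For the energy estimate, I would test \eqref{identity1} with $\widetilde\psi_1=v_{k+1}$, \eqref{identity2} with $\widetilde\psi_2=\mathrm{div}(\xi(\phi_k)\nabla M_{k+1})-\frac{\xi(\phi_k)}{\alpha^2}(|M_{k+1}|^2M_{k+1}-M_k)$ (equivalently with $\partial_t$-type increments after an integration by parts), \eqref{identity3} with $\widetilde\psi_3=\mu_{k+1}$, and \eqref{identity4} with $\widetilde\psi_3=-\frac{\phi_{k+1}-\phi_k}{h}$, then add the resulting identities. The convective terms cancel pairwise: $\int_\Omega(v_{k+1}\cdot\nabla)v_{k+1}\cdot v_{k+1}=0$ by incompressibility, $\int_\Omega\nabla\mu_{k+1}\phi_k\cdot v_{k+1}$ cancels against $\int_\Omega(v_{k+1}\cdot\nabla)\phi_k\,\mu_{k+1}$, and the two magnetization-transport/stress contributions $\int_\Omega\mathrm{div}(\xi(\phi_k)\nabla M_{k+1})\nabla M_{k+1}\cdot v_{k+1}$ and $\int_\Omega\frac{\xi(\phi_k)}{\alpha^2}(|M_{k+1}|^2M_{k+1}-M_k)\nabla M_{k+1}\cdot v_{k+1}$ and $\int_\Omega(v_{k+1}\cdot\nabla)M_{k+1}\cdot(\dots)$ cancel against the corresponding terms from \eqref{identity2}. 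The discrete time derivatives produce, via the elementary identity $a\cdot(a-b)=\tfrac12|a|^2-\tfrac12|b|^2+\tfrac12|a-b|^2$, the kinetic and Dirichlet energy differences plus nonnegative remainders; the role of the function $H_0$ in \eqref{H0} is precisely to make $\int_\Omega H_0(\phi_{k+1},\phi_k)(|\nabla M_{k+1}|^2/2+\tfrac{1}{4\alpha^2}(|M_{k+1}|^2-1)^2)(\phi_{k+1}-\phi_k)$ reconstruct the increment of $\frac12\int_\Omega\xi(\phi)|\nabla M|^2+\frac{1}{4\alpha^2}\int_\Omega\xi(\phi)(|M|^2-1)^2$ exactly (a discrete chain rule, since $\xi(\phi_{k+1})-\xi(\phi_k)=H_0(\phi_{k+1},\phi_k)(\phi_{k+1}-\phi_k)$).

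The genuinely delicate term is $\int_\Omega\frac{\xi(\phi_k)}{\alpha^2}(|M_{k+1}|^2M_{k+1}-M_k)\cdot(M_{k+1}-M_k)$, coming from the magnetization equation tested against its time increment, together with the cubic term $\frac{1}{\eta}\int_\Omega(\phi_{k+1}^3-\phi_k)(\phi_{k+1}-\phi_k)$. Here Lemma~\ref{algebriclem} is the workhorse: applying \eqref{AlgIneq1} pointwise with $A=M_{k+1}(x)$, $B=M_k(x)$ (and using $\xi(\phi_k)\ge c_1>0$ from \eqref{XiAssum}) bounds the increment of $\frac{1}{4\alpha^2}\int_\Omega\xi(\phi_k)(|M|^2-1)^2$ plus nonnegative dissipative remainders from below by the tested term; \eqref{AlgIneq2} does the analogous job for the scalar cubic term and the $(\phi^2-1)^2$ energy. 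Collecting all contributions, the nonnegative remainder terms are discarded, $h\nu\|\nabla v_{k+1}\|_{L^2}^2$ and $h\|\nabla\mu_{k+1}\|_{L^2}^2$ survive from the dissipation, $h\|\mathrm{div}(\xi(\phi_k)\nabla M_{k+1})-\tfrac{\xi(\phi_k)}{\alpha^2}(|M_{k+1}|^2M_{k+1}-M_k)\|_{L^2}^2$ survives from the choice of $\widetilde\psi_2$, and what remains is exactly \eqref{discreteestimate}. I expect the main obstacle to be bookkeeping: verifying that every convective and stress term cancels with the correct sign across the four tested identities, and that the algebraic lemma is invoked with the precise grouping needed so that no uncontrolled term is left over — in particular that the "bad" mixed term $-\mu_{k+1}\nabla\phi_k\cdot v_{k+1}$ is reconciled with the Cahn--Hilliard pair before the cubic estimates are applied.
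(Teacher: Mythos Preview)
Your energy-estimate argument is essentially identical to the paper's: the same four test functions, the same pairwise cancellations of convective and stress terms, the same use of the identity $A\cdot(A-B)=\tfrac12|A|^2-\tfrac12|B|^2+\tfrac12|A-B|^2$, the same invocation of Lemma~\ref{algebriclem} for the cubic terms in $M$ and $\phi$, and the same discrete-chain-rule role for $H_0$. (A minor point: the paper tests \eqref{identity2} with $-\dvr(\xi(\phi_k)\nabla M_{k+1})+\frac{\xi(\phi_k)}{\alpha^2}(|M_{k+1}|^2M_{k+1}-M_k)$, i.e.\ the opposite sign from what you wrote, but after rearrangement this is the same computation.)

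On the existence side the paper also uses Leray--Schauder, but organizes it more explicitly: it writes the problem as $\mathcal{N}_k(w)=\mathcal{F}_k(w)$, where $\mathcal{N}_k$ carries the Stokes operator, the term $-\dvr(\xi(\phi_k)\nabla M)+\frac{\xi(\phi_k)}{\alpha^2}(|M|^2M-M_k)+\overline M$, and Neumann Laplacians on $\phi,\mu$ augmented by their spatial averages, while $\mathcal{F}_k$ collects all lower-order/compact terms. It then proves $\mathcal{N}_k^{-1}:Y\to X$ is bounded and continuous by first showing strong monotonicity of an extension $\mathcal{S}_k$ on a larger space and then a two-step elliptic bootstrap ($W^{2,3/2}\to W^{2,2}$) to recover $M\in W^{2,2}_n$; finally it applies Leray--Schauder to $\mathcal{F}_k\circ\mathcal{N}_k^{-1}$. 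Your description of solving the subproblems separately and using monotonicity of the cubics is compatible with this, just less specific.

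The one place where your sketch is too optimistic is the a priori bound for the fixed-point argument: you say the energy estimate ``will provide'' it. In the paper's formulation this is not automatic. Because $\mathcal{N}_k$ carries the extra average terms $\overline M,\overline\phi,\overline\mu$ (needed so that the Neumann problems are invertible), the homotopy $\mathcal{N}_k(w)=\lambda\mathcal{F}_k(w)$, after the same testing, leaves residual terms $(1-\lambda)\overline\mu^{\,2}$, $-(1-\lambda)h^{-1}\overline\phi\,\overline{\phi_k}$ and $(1-\lambda)\overline M\cdot\int_\Omega\bigl(-\dvr(\xi(\phi_k)\nabla M)+\frac{\xi(\phi_k)}{\alpha^2}(|M|^2M-M_k)\bigr)$ that do not cancel; see \eqref{energylambda}--\eqref{energylambda*}. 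The paper absorbs these via Young's inequality, but to do so it needs a \emph{separate} test of the $M$-equation with $\widetilde\psi_2=M$ to control $(1-\lambda)\overline M^{\,2}$ independently of $\lambda$ (inequality \eqref{normboundM}), and a case split $\lambda\in[0,\tfrac12)$ versus $\lambda\in[\tfrac12,1]$ to control $|\overline\mu|$ (equations \eqref{MuAvEst}--\eqref{PhiEq}). So the clean inequality \eqref{discreteestimate} is the $\lambda=1$ endpoint but is not by itself the uniform-in-$\lambda$ bound that closes Leray--Schauder; that step requires genuine additional work which your proposal does not yet address.
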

\begin{proof}
	The proof is divided in two steps. Although it is natural to prove first the existence of a weak solution and then to obtain the energy dissipation inequality \eqref{discreteestimate} as a special case of an inequality derived in the existence proof, we choose to present them in a reverse way. The reason is that the approach we took consists in the application of Lemma~\ref{algebriclem} and identity  \eqref{algebricidnty} in both steps but its presentation requires less space and is easier to understand in the proof of the energy inequality.
	The first step, i.e., the obtainment of the estimate \eqref{discreteestimate} is contained in Section \ref{discreteenergyestimatesection} and the second step, i.e., the proof of the existence of weak solutions to \eqref{timediscretesystem} is included in Section \ref{existencetimediscrete}. We emphasize that the proof of the second step is independent of the first one.
	
	\subsection{Any weak solution \texorpdfstring{$(v_{k+1},M_{k+1},\phi_{k+1},\mu_{k+1})$}{quadruple} of \texorpdfstring{\eqref{timediscretesystem}}{sref} in the sense of Definition~\ref{defweaksolta} satisfies \texorpdfstring{\eqref{discreteestimate}}{soref}}\label{discreteenergyestimatesection}
	To this end we set $\widetilde\psi_1=v_{k+1}$ in \eqref{identity1},  $\widetilde\psi_2=-\mbox{div}(\xi(\phi_{k})\nabla M_{k+1})+\frac{\xi(\phi_{k})}{\alpha^{2}}(|M_{k+1}|^{2}M_{k+1}-M_{k})$ in \eqref{identity2}, $\widetilde\psi_3=\mu_{k+1}$ in  \eqref{identity3} and  $\widetilde\psi_3=-\frac{\phi_{k+1}-\phi_{k}}{h}$ in \eqref{identity4}, employ the definition \eqref{H0} of $H_{0}(\cdot,\cdot)$ and add up the resulting expressions to furnish:
	\begin{equation}\label{energy1stp}
	\begin{split}
	&\int_{\Omega}(v_{k+1}-v_{k})\cdot v_{k+1}
	+h\nu \int_{\Omega}|\nabla v_{k+1}|^{2}
	+\int_{\Omega}(M_{k+1}-M_{k})\cdot \bigl(-\mbox{div}(\xi(\phi_{k})\nabla M_{k+1})\\
	&+\frac{\xi(\phi_{k})}{\alpha^{2}}(|M_{k+1}|^{2}M_{k+1}-M_{k})\bigr)+h\int_{\Omega}\Bigl|\mbox{div}(\xi(\phi_{k})\nabla M_{k+1})-\frac{\xi(\phi_{k})}{\alpha^{2}}(|M_{k+1}|^{2}M_{k+1}-M_{k})\Bigr|^{2}\\
	&\displaystyle +\frac{1}{2}\int_{\Omega}\bigl(\xi(\phi_{k+1})-\xi(\phi_{k})\bigr)|\nabla M_{k+1}|^{2}+\frac{1}{4\alpha^{2}}\int_{\Omega}\bigl(\xi(\phi_{k+1})-\xi(\phi_{k})\bigr)(|M_{k+1}|^{2}-1)^{2}\\
	&+\eta\int_{\Omega}\nabla\phi_{k+1}(\nabla\phi_{k+1}-\nabla\phi_{k})+\frac{1}{\eta}\int_{\Omega}(\phi_{k+1}-\phi_{k})(\phi^{3}_{k+1}-\phi_{k})+h\int_{\Omega}|\nabla \mu_{k+1}|^{2}=0,
	\end{split}
	\end{equation}
	One now performs integration by parts in the last term of \eqref{energy1stp}$_1$ and uses the following identity
	\begin{equation}\label{algebricidnty}
	\begin{split}
	\ A\cdot(A-B)=\frac{|A|^{2}}{2}-\frac{|B|^{2}}{2}+\frac{|A-B|^{2}}{2}\,\,\text{ for all }\,\,A,\,B\in\mathbb{R}^{m},\,m\in\eN,
	\end{split}
	\end{equation}
	on the results of the previous manipulations and the first term of \eqref{energy1stp}$_1$
	and Lemma~\ref{algebriclem} on terms involving $|M_{k+1}|^{2}M_{k+1}-M_{k}$, $\phi^{3}_{k+1}-\phi_{k}$ respectively,
	to infer
	\begin{equation}\label{energy2stp}
	\begin{split}
	&\frac{1}{2}\int_{\Omega}|v_{k+1}|^{2}-\frac{1}{2}\int_{\Omega}|v_{k}|^{2}+\frac{1}{2}\int_{\Omega}|v_{k+1}-v_{k}|^{2}+h\nu\int_{\Omega}|\nabla v_{k+1}|^{2}+\frac{1}{2}\int_{\Omega}\xi(\phi_{k+1})|\nabla M_{k+1}|^{2}\\
	&-\frac{1}{2}\int_{\Omega}\xi(\phi_{k})|\nabla M_{k}|^{2}+\frac{1}{2}\int_{\Omega}\xi(\phi_{k})|\nabla M_{k+1}-\nabla M_{k}|^{2}+\frac{1}{4\alpha^{2}}\int_{\Omega}\xi(\phi_{k+1})\bigl(|M_{k+1}|^{2}-1\bigr)^{2}\\
	&-\frac{1}{4\alpha^{2}}\int_{\Omega}\xi(\phi_{k})\bigl(|M_{k}|^{2}-1\bigr)^{2}
	+\frac{1}{4\alpha^{2}}\int_{\Omega}\xi(\phi_{k})\bigl(|M_{k+1}|^{2}-|M_{k}|^{2}\bigr)^{2}\\
	&+\frac{1}{2\alpha^{2}}\int_{\Omega}\xi(\phi_{k})|M_{k+1}\cdot(M_{k+1}-M_{k})|^{2}
	+\frac{1}{2\alpha^{2}}\int_{\Omega}\xi(\phi_{k})|M_{k+1}-M_{k}|^{2}\\
	& +h\int_{\Omega}\Bigl|\mbox{div}(\xi(\phi_{k})\nabla M_{k+1})-\frac{\xi(\phi_{k})}{\alpha^{2}}(|M_{k+1}|^{2}M_{k+1}-M_{k})\Bigr|^{2}+\frac{\eta}{2}\int_{\Omega}|\nabla\phi_{k+1}|^{2}-\frac{\eta}{2}\int_{\Omega}|\nabla\phi_{k}|^{2}\\
	&+\frac{\eta}{2}\int_{\Omega}|\nabla\phi_{k+1}-\nabla\phi_{k}|^{2}+\frac{1}{4\eta}\int_{\Omega}(\phi_{k+1}^{2}-1)^{2}-\frac{1}{4\eta}\int_{\Omega}(\phi_{k}^{2}-1)^{2}+\frac{1}{4\eta}\int_{\Omega}(\phi^{2}_{k+1}-\phi^{2}_{k})^2\\
	&+\frac{1}{2\eta}\int_{\Omega}(\phi^{2}_{k+1}-\phi_{k+1}\phi_{k})^{2}
	+\frac{1}{2\eta}\int_{\Omega}(\phi_{k+1}-\phi_{k})^{2}+h\int_{\Omega}|\nabla \mu_{k+1}|^{2}\leqslant0.
	\end{split}
	\end{equation}
	Ignoring some of the positive terms appearing on the left hand side of \eqref{energy2stp}, one at once concludes the discrete energy estimate \eqref{discreteestimate} from \eqref{energy2stp}.  
	
	\subsection{Proof of the existence of weak solutions to \texorpdfstring{\eqref{timediscretesystem}}{someref}}\label{existencetimediscrete}
	In order to prove the existence of a weak solution to the time discrete problem \eqref{timediscretesystem}, we will use Leray-Schauder fixed point principle. For this purpose we consider the following Hilbert spaces 
	\begin{equation*}
	\begin{split}
	& X=\WND\times W^{2,2}_{n}(\Omega)\times W^{1,2}(\Omega)\times W^{1,2}(\Omega),\\
	& Y= \bigl(\WND\bigr)'\times L^2(\Omega)\times\bigl(W^{1,2}(\Omega)\bigr)'\times \bigl(W^{1,2}(\Omega)\bigr),
	\end{split}
	\end{equation*}
	with norms defined as a sum of norms on each factor of the corresponding cartesian product, and operators $\mathcal{N}_{k},\mathcal{F}_{k}:X\to Y$. For $w=(v,M,\phi,\mu)\in X,$ we define the operator $\mathcal{N}_{k}$ as follows
	\begin{equation}\label{Lk}
	\begin{split}
	\mathcal{N}_{k}(w)=\begin{pmatrix}
	\mathcal{A}v\\[2.mm]
	-\dvr(\xi(\phi_{k})\nabla M)+\frac{\xi(\phi_{k})}{\alpha^{2}}(|M|^{2}M-M_{k})+\overline{M}\\[2.mm]
	-\eta\Delta_N\phi+\frac{1}{\eta}(\phi^{3}-\phi_{k})+\overline{\phi}\\
	-\Delta_{N}\mu+\overline{\mu}
	\end{pmatrix}
	\end{split}
	\end{equation}
	where $\displaystyle\overline{f}=\int_\Omega f$ for any $f\in L^1(\Omega)$, and $\mathcal{A}: \WND\to \bigl(\WND\bigr)'$ and $-\Delta_N: W^{1,2}(\Omega)\to \bigl(W^{1,2}(\Omega)\bigr)'$ are given for all $(v,\varphi)\in \WND\times W^{1,2}(\Omega)$ by
	\begin{align*}
	\langle\mathcal{A}v,\widetilde\psi_1\rangle&=\nu\int_{\Omega}\nabla v\cdot\nabla \widetilde\psi_1\text{ for all }\widetilde\psi_1\in\WND,\\
	\langle-\Delta_N\varphi,\widetilde\psi_3\rangle&=\int_{\Omega}\nabla\varphi\cdot\nabla\widetilde\psi_3\text{ for all }\widetilde\psi_3\in W^{1,2}(\Omega).
	\end{align*}
	We further introduce $\mathcal{F}_{k}$ for $w=(v,M,\phi,\mu)\in X,$ as follows
	\begin{equation}\label{Fk}
	\begin{array}{ll}
	\mathcal{F}_{k}(w)=\begin{pmatrix}
	&\ -\frac{v-v_{k}}{h}-(v\cdot\nabla)v-\nabla\mu\phi_{k}+\frac{\xi(\phi_{k})}{\alpha^{2}}(|M|^{2}M-M_{k})\nabla M-\dvr(\xi(\phi_{k})\nabla M)\nabla M\\[4.mm]
	&\ -\frac{M-M_{k}}{h}-(v\cdot\nabla)M+\overline{M}\\[4.mm]
	& \ \mu- H_{0}(\phi,\phi_{k})\frac{|\nabla M|^{2}}{2}-\frac{H_{0}(\phi,\phi_{k})}{4\alpha^{2}}(|M|^{2}-1)^{2}+\overline{\phi}\\[4.mm]
	& \  -\frac{\phi-\phi_{k}}{h}-(v\cdot\nabla)\phi_{k}+\overline\mu
	\end{pmatrix}.
	\end{array}
	\end{equation}
	One observes that $w_{k+1}=(v_{k+1},M_{k+1},\phi_{k+1},\mu_{k+1})\in X$ is a weak solution to the time discrete problem \eqref{timediscretesystem} iff the following holds
	$$\mathcal{N}_{k}(w_{k+1})=\mathcal{F}_k(w_{k+1})\,\,\mbox{in}\,\, Y.$$
	The existence of such $w_{k+1}\in X$ will be proved via the Leray-Schauder fixed point theorem. As a prerequisite we need to show the following:
	\begin{itemize}
		\item The operator $\mathcal{N}_{k}: X\to Y$ is bounded and invertible and the inverse operator $\mathcal{N}^{-1}_{k}: Y\to X$ is bounded and continuous.\\
		\item The operator $\mathcal{F}_{k}: X\to Y$ is continuous and compact.
	\end{itemize}
	The first item is proved in Section \ref{Nk} while the assertion of the second item is obtained in Section \ref{FkProp}.  
	\subsubsection{The operator $\mathcal{N}_{k}: X\to Y$ is bounded and invertible and the inverse operator $\mathcal{N}^{-1}_{k}: Y\to X$ is bounded and continuous.}\label{Nk} Showing the boundedness of $\mathcal{N}_k:X\to Y$ is straightforward therefore we will not deal with details and we focus on the invertibility of $\mathcal{N}_k$ and properties of the inverse. To this end we will proceed via defining  another operator $\mathcal{S}_{k}$ retaining a similar structure as that of $\mathcal{N}_{k}$ but defined on a space $W\supset X.$\\
	 Denoting $$W=\WND\times W^{1,2}(\Omega)\times W^{1,2}(\Omega)\times W^{1,2}(\Omega)$$ we define the operator $\mathcal{S}_k:W\to W'$ as
	\begin{equation*}
	\begin{split}
	\mathcal{S}_{k}(w)=\begin{pmatrix}
	\ \mathcal{A}v\\[2.mm]
	-\dvr_N(\xi(\phi_{k})\nabla M)+\frac{\xi(\phi_{k})}{\alpha^{2}}(|M|^{2}M-M_{k})+\overline{M}\\[2.mm]
	-\eta\Delta_N\phi+\frac{1}{\eta}(\phi^{3}-\phi_{k})+\overline{\phi}\\
	-\Delta_{N}\mu+\overline{\mu}
	\end{pmatrix},
	\end{split}
	\end{equation*}
	where $-\dvr_N:L^2(\Omega)\to (W^{1,2}(\Omega))^{'}$ is defined for $\Phi\in L^2(\Omega)$ as 	
	\begin{equation*}
	\langle-\dvr_N \Phi,\psi_2\rangle=\int_\Omega \Phi\cdot\nabla\psi_2\text{ for all }\psi_2\in W^{1,2}(\Omega).
	\end{equation*}
	The goal is to show that $\mathcal{S}_k$ is invertible and the restriction of the inverse operator $\mathcal{S}_k^{-1}$ is bounded and continuous from $Y$ to $X.$ Then by the coincidence of $\mathcal{N}_k$ and the restriction of $\mathcal{S}_k$ on $X$ the boundedness and continuity of $\mathcal{N}_k^{-1}$ follows. 
	
	\enlargethispage{-\baselineskip}
	
	\textbf{$\mathcal{S}_k^{-1}:W'\to W$ is bounded and continuous:} We begin with the justification of the strong monotonicity of $\mathcal{S}_k$ on $W$. To this end, we consider an arbitrary couple $w_1,w_2\in W$, $w_i=(v_i,M_i,\phi_i,\mu_i),\ i=1,2$ and we compute
	\begin{align*}
	\langle\mathcal{S}_k(w_1)-\mathcal{S}_k(w_2),w_1-w_2\rangle =& \int_\Omega |\nabla(v_1-v_2)|^2+\int_\Omega\xi(\phi_k)|\nabla (M_1-M_2)|^2\\&+\int_\Omega\frac{\xi(\phi_k)}{\alpha^2}\bigl(|M_1|^2M_1-|M_2|^2M_2\bigr)\cdot(M_1-M_2)+\bigl|\overline{M_1-M_2}\bigr|^2\\
	& +\eta\int_\Omega|\nabla (\phi_1-\phi_2)|^2+\frac{1}{\eta}\int_\Omega\bigl(\phi_1^3-\phi_2^3\bigr)(\phi_1-\phi_2)+\bigl(\overline{\phi_1-\phi_2}\bigr)^2\\
	&+\int_\Omega|\nabla(\mu_1-\mu_2)|^2+\bigl(\overline{\mu_1-\mu_2}\bigr)^2=\sum_{i=1}^9 I_i.
	\end{align*}
	We show that the latter sum bounds from above the square of the norm in $W$ of the difference $w_1-w_2$. We infer $I_2\geq c_1\|\nabla (M_1-M_2)\|^2_{L^2(\Omega)}$ using the lower bound on $\xi$ and the monotonicity of the mapping $\alpha\mapsto |\alpha|^2\alpha$ implies $I_3,I_6\geq 0$. Employing the lower bound on $\xi$ again and Lemma~\ref{Lem:PoincareAverage} we infer $I_2+I_4\geq c\|M_1-M_2\|^2_{W^{1,2}(\Omega)}$,  $I_5+I_7\geq c\|\phi_1-\phi_2\|^2_{W^{1,2}(\Omega)}$ and $I_8+I_9\geq c\|\mu_1-\mu_2\|^2_{W^{1,2}(\Omega)}$. Hence we conclude that 
	\begin{equation*}
	\langle\mathcal{S}_k(w_1)-\mathcal{S}_k(w_2),w_1-w_2\rangle\geq c\|w_1-w_2\|^2_W,
	\end{equation*}
	i.e., $\mathcal{S}_k$ is strongly monotone.
	
	With the help of the embedding $W^{1,2}(\Omega)\hookrightarrow L^6(\Omega)$ we justify the boundedness of $\mathcal{S}_k$. Applying the Lebesgue dominated convergence theorem it can be immediately checked that $\mathcal{S}_k$ is radially continuous on $W$, i.e., for each pair $w,\tilde w\in W$ the function $t\in\eR\mapsto \langle\mathcal{S}_k(w+t\tilde{w}),\tilde w\rangle$ is continuous. Repeating the arguments used for the justification of the strong monotonicity of $\mathcal{S}_k$ we conclude that $\langle \mathcal{S}_k(w),w\rangle \geq c\|w\|^2_W-c_k$, for any $w\in W$ with $c_k$ depending on $\phi_k$ and $M_k$. The latter inequality implies that $\mathcal{S}_k$ is coercive on $W$, i.e.,
	$$\lim_{\|w\|_W\to\infty}\frac{\langle\mathcal{S}_k(w),w\rangle}{\|w\|_W}=\infty.$$ 
	The application of \cite[Theorem 2.14]{Ro05} yields the existence of the inverse operator $\mathcal{S}_k^{-1}:W'\to W$ that is bounded and Lipschitz continuous.\\[2.mm]
	
	 \textbf{$\mathcal{S}^{-1}_k$ is bounded and continuous from $Y$ to $X$:} We first verify the boundedness and to this end it is sufficient to focus on the second component of the operator under consideration. We observe that for fixed $F\in Y,$ $M$ is a weak solution to 
	\begin{equation}\label{Mkp2}
	\begin{alignedat}{2}
	\Delta M=&\frac{1}{\xi(\phi_k)}\Bigl(F_2-\xi'(\phi_{k})\nabla M\nabla\phi_k+\frac{\xi(\phi_{k})}{\alpha^{2}}\bigl(|M|^{2}M-M_{k}\bigr)\Bigr)\quad&&\text{ in }\Omega,\\
	\partial_{n}M=&0&&\text{ on }\partial\Omega,\\
	\end{alignedat}
	\end{equation}
	where $F_2\in L^2(\Omega)$ is the second component of $F$. Using the bounds on $\xi$ from \eqref{XiAssum} again, the facts that $M\in W^{1,2}(\Omega)$ and $\nabla\phi_k\in W^{1,2}(\Omega)\hookrightarrow L^6(\Omega)$ we get $\nabla M\nabla\phi_k\in L^\frac{3}{2}(\Omega)$, $|M|^2M\in L^2(\Omega)$ and consequently
	\begin{equation}\label{Mkp3}
	\begin{split}
	\Delta M=&\tilde f\in L^q(\Omega),\\
	\partial_{n}M=&0\text{ on }\partial\Omega
	\end{split}
	\end{equation}
	with $q=\frac{3}{2}$.
	One now applies Lemma~\ref{Lem:PoissReg}, recalling the assumption $\Omega$ is of class $C^{1,1}$, to conclude that
	$M\in W^{2,q}_{n}(\Omega)$ and
	\begin{equation}\label{MSecDerBound}
	\|M\|_{W^{2,q}(\Omega)}\leq c(\|\tilde f\|_{L^q(\Omega)}+\|M\|_{W^{1,q}(\Omega)})\leq C_k\left(1+\|F_2\|_{L^q(\Omega)}+\|M\|^3_{L^2(\Omega)}+\|\phi_k\|_{W^{2,2}(\Omega)}\|M\|_{W^{1,s}(\Omega)}\right)
	\end{equation} 
	with $s=2$.	Then we get $\nabla M\in W^{1,\frac{3}{2}}(\Omega)\hookrightarrow L^3(\Omega)$, which implies that we have \eqref{Mkp3} with $q=2$. Applying Lemma~\ref{Lem:PoissReg} again one concludes $M\in W^{2,2}_n(\Omega)$ and bound \eqref{MSecDerBound} with $q=2$ and $s=3$. We estimate in this case $\|M\|_{W^{1,3}(\Omega)}$ on the right hand side of \eqref{MSecDerBound} using the embedding $W^{2,\frac{3}{2}}(\Omega)\hookrightarrow W^{1,3}(\Omega)$ and \eqref{MSecDerBound} with $q=\frac{3}{2}$, $s=2$ to conclude that the restriction of $\mathcal{S}^{-1}_k$ on $Y$ maps bounded sets in $Y$ to bounded sets in $X$.\\[2.mm]
	 In order to show the continuity of $\mathcal{S}^{-1}_k:Y\to X$, we consider a sequence $\{F_j\}\subset Y$ such that $F_j\to F$ in $Y$. Then thanks to the continuity of $\mathcal{S}_k^{-1}$ from $Y$ to $W$ we have for $M_j$, $M$ corresponding to $F_j$, $F$ respectively, that $M_j\to M$ in $W^{1,2}(\Omega)$. Moreover, from \eqref{Mkp2} we observe that $M_j-M$ is a weak solution to     \begin{equation}\label{MDifEq}
	\begin{alignedat}{2}
	\Delta (M_j-M)=&\frac{1}{\xi(\phi_k)}\Bigl((F_j-F)_2-\xi'(\phi_{k})\nabla (M_j-M)\nabla\phi_k+\frac{\xi(\phi_{k})}{\alpha^{2}}\bigl(|M_j|^{2}M_j-|M|^2M\bigr)\Bigr)&&\text{ in }\Omega,\\
	\partial_{n}(M_j-M)=&0&&\text{ on }\partial\Omega,
	\end{alignedat}
	\end{equation}
	where $(F_j-F)_2$ stands for the second component of $F_j-F$. By Lemma~\ref{Lem:PoissReg} we obtain similarly as above
	\begin{align*}
	\|M_j-M\|_{W^{2,\frac{3}{2}}(\Omega)}\leq & c \bigl(\|(F_j-F)_2\|_{L^\frac{3}{2}(\Omega)}+\|\nabla (M_j-M)\|_{L^2(\Omega)}\|\nabla \phi_k\|_{L^6(\Omega)}\\&+(\|M_j\|^2_{L^6(\Omega)}+\|M\|^2_{L^6(\Omega)})\|M_j-M\|_{L^6(\Omega)}+\|M_j-M\|_{W^{1,\frac{3}{2}}(\Omega)}\bigr).
	\end{align*}
	Hence the convergence $F_j\to F$ in $Y$ and the continuity of $\mathcal{S}_k^{-1}$ from $Y$ to $W$ implies $M_j \to M$ in $W^{2,\frac{3}{2}}(\Omega)$, $M_j\to M$ in $W^{1,3}(\Omega)$ consequently. Going back to \eqref{MDifEq} we infer
	\begin{align*}
	\|M_j-M\|_{W^{2,2}(\Omega)}\leq & c \bigl(\|(F_j-F)_2\|_{L^2(\Omega)}+\|\nabla (M_j-M)\|_{L^3(\Omega)}\|\nabla \phi_k\|_{L^6(\Omega)}\\&+(\|M_j\|^2_{L^6(\Omega)}+\|M\|^2_{L^6(\Omega)})\|M_j-M\|_{L^6(\Omega)}+\|M_j-M\|_{W^{1,2}(\Omega)}\bigr)
	\end{align*}
	and subsequently the convergence $M_j\to M$ in $W^{2,2}(\Omega)$, which concludes the continuity of $\mathcal{S}^{-1}_k:Y\to X$. Then  we immediately conclude that the restriction of $\mathcal{S}_k$ on $X$ coincides with $\mathcal{N}_k$ as the amount of regularity for $M$ allows for performing the integration by parts in the second component of $\mathcal{S}_k$. Then the restriction of $\mathcal{S}_k^{-1}$ on $Y$ coincides with $\mathcal{N}_k^{-1}$, which in particular implies the boundedness and continuity of the latter operator. 
	\subsubsection{The operator $\mathcal{F}_{k}: X\to Y$ is continuous and compact}\label{FkProp}  Now we show that $\mathcal{F}_{k}:X\to Y$ is compact. Let us introduce the space
	$$Z=L^{\frac{3}{2}}(\Omega)\times W^{1,\frac{3}{2}}(\Omega)\times L^3(\Omega)\times L^{3}(\Omega).$$ 
	To prove the compactness of $\mathcal{F}_{k}:X\to Y,$ first it is shown that $\mathcal{F}_{k}:X\to Z$ is a bounded operator and then we show that $\mathcal{F}_{k}:X\to Z$ is continuous. Finally, in view of the compact embedding of $Z$ into $Y$ and the linearity of the inclusion map we conclude that $\mathcal{F}_{k}: X\to Y$ is continuous and compact.
	
	\enlargethispage{-\baselineskip}
	
	\textbf{Boundedness of $\mathcal{F}_{k}:X\to Z:$} This is going to be a consequence of the following estimates whose obtainment are explained after \eqref{boundednes}: 
	\begin{equation}\label{boundednes}
	\begin{split}
	\|(v\cdot\nabla)v\|_{L^{\frac{3}{2}}(\Omega)}&\leqslant C\|v\|^{2}_{W^{1,2}(\Omega)},\\
	\|\nabla\mu\phi_{k}\|_{L^{\frac{3}{2}}(\Omega)}&\leqslant C_{k}\|\mu\|_{W^{1,2}(\Omega)},\\
	\|\xi(\phi_{k})(|M|^{2}M-M_{k})\nabla M\|_{L^{\frac{3}{2}}(\Omega)}&\leqslant C_{k}(\|M\|^{4}_{W^{2,2}(\Omega)}+\|M\|_{W^{2,2}(\Omega)}),\\
	\|\mathrm{div}(\xi(\phi_{k})\nabla M)\nabla M\|_{L^{\frac{3}{2}}(\Omega)}&\leqslant C_{k}\|M\|^{2}_{W^{2,2}(\Omega)},\\
	\|(v\cdot\nabla)M\|_{W^{1,\frac{3}{2}}(\Omega)}&\leqslant C\|v\|_{W^{1,2}(\Omega)}\|M\|_{W^{2,2}(\Omega)},\\
	\frac{1}{2}\|H_{0}(\phi,\phi_{k})|\nabla M|^{2}\|_{L^{3}(\Omega)}&\leqslant C_{k}\|M\|^{2}_{W^{2,2}(\Omega)},\\[3.mm]
	\frac{1}{4\alpha^2}\|H_{0}(\phi,\phi_{k})(|M|^{2}-1)^{2}\|_{L^{3}(\Omega)}&\leqslant C_{k}(\|M\|^{4}_{W^{2,2}(\Omega)}+1),\\[3.mm]
	\|(v\cdot\nabla)\phi_{k}\|_{L^{3}(\Omega)}& \leqslant C_{k}\|v\|_{W^{1,2}(\Omega)}
	\end{split}
	\end{equation}
	and the linear terms are obviously bounded in corresponding spaces. We will explain the obtainment of the bounds \eqref{boundednes}$_{3}$, \eqref{boundednes}$_{4}$, \eqref{boundednes}$_{5}$ and \eqref{boundednes}$_{6}$. The other boundedness estimates are relatively easy to deal with.\\
	To show \eqref{boundednes}$_{3}$, we use the continuous embedding of $W^{2,2}_{n}(\Omega)$,  $W^{1,2}(\Omega)$ respectively in $L^{\infty}(\Omega)$,  $L^{6}(\Omega)$ and \eqref{XiAssum}$_{2}$, and furnish the bound of $|M|^{2}M\nabla M$ in   $L^{\infty}(\Omega)\times L^{6}(\Omega)\hookrightarrow L^{6}(\Omega)\hookrightarrow L^{\frac{3}{2}}(\Omega).$\\[2.mm]
	Now we will prove \eqref{boundednes}$_{4}.$
	In that direction one expands $\mathrm{div}(\xi(\phi_{k})\nabla M)\nabla M$ to have:
	\begin{equation*}
	\begin{array}{l}
	\mathrm{div}(\xi(\phi_{k})\nabla M)=\xi(\phi_{k})\Delta M+\xi^{'}(\phi_{k})\nabla M\nabla\phi_{k}.
	\end{array}
	\end{equation*}
	We estimate the terms on the right hand side of the latter identity. Since $\phi_{k}$ is bounded in $W^{2,2}_{n}(\Omega),$ one uses \eqref{XiAssum}$_{3}$ and the embedding $W^{1,2}(\Omega)\times W^{1,2}(\Omega)\hookrightarrow L^{6}(\Omega)\times L^{6}(\Omega)\hookrightarrow L^{3}(\Omega)$ to furnish the $L^{3}(\Omega)$ norm bound for the second summand. In view of \eqref{XiAssum}$_{2},$ the $L^{2}(\Omega)$ norm of the first summand is bounded by $\|M\|_{W^{2,2}(\Omega)}.$ This altogether provides that $$\|\mathrm{div}(\xi(\phi_{k})\nabla M)\|_{L^{2}(\Omega)}\leqslant C_{k}\|M\|_{W^{2,2}(\Omega)}.$$ Further one uses the embedding $L^{2}(\Omega)\times W^{1,2}(\Omega)\hookrightarrow L^{2}(\Omega)\times L^{6}(\Omega)\hookrightarrow L^{\frac{3}{2}}(\Omega)$ to have
	$$\ \|\mathrm{div}(\xi(\phi_{k})\nabla M)\nabla M\|_{L^{\frac{3}{2}}(\Omega)}\leqslant\|\mathrm{div}(\xi(\phi_{k})\nabla M)\|_{L^{2}(\Omega)}\|\nabla M\|_{L^{6}(\Omega)}\leqslant C_{k}\|M\|^{2}_{W^{2,2}(\Omega)},$$
	which is \eqref{boundednes}$_{4}$.\\
	Now we will show \eqref{boundednes}$_{5}.$ The boundedness of $(v\cdot\nabla)M$ in $L^{\frac{3}{2}}(\Omega)$ is clear from the continuous embedding $W^{1,2}(\Omega)\hookrightarrow L^{6}(\Omega)$ and $L^{6}(\Omega)\times L^{6}(\Omega)\hookrightarrow L^{3}(\Omega)\hookrightarrow L^{\frac{3}{2}}(\Omega).$ In order to estimate the spatial derivatives of $(v\cdot\nabla)M,$ one needs to estimate terms of the form $(v)_{i}\partial_{ij}M$ and $\partial_{j}(v)_{i}\partial_{i}M.$ Since $W^{1,2}(\Omega)\hookrightarrow L^{6}(\Omega)$ and $L^{6}(\Omega)\times L^{2}(\Omega)\hookrightarrow L^{\frac{3}{2}}(\Omega)$ the boundedness of $\|(v)_{i}\partial_{ij}M\|_{L^{\frac{3}{2}}(\Omega)}$ follows. Similarly one uses $W^{2,2}(\Omega)\hookrightarrow W^{1,6}(\Omega)$ to furnish the bound on $\|\partial_{j}(v)_{i}\partial_{i}M\|_{L^{\frac{3}{2}}(\Omega)}.$ This concludes \eqref{boundednes}$_{5}$.\\
	Finally, let us show \eqref{boundednes}$_{6}.$
	One recalls the definition of $H_{0}(\cdot,\cdot)$ from \eqref{H0}. Since $\xi(\cdot)\in C^{1}(\mathbb{R}),$ one uses the mean value theorem and the upper bound of $\xi'(\cdot)$, cf.\ assumption \eqref{XiAssum} to have $\|H_{0}(\phi,\phi_{k})\|_{L^{\infty}(\Omega)}\leqslant C_{k}$, for some positive constant $C_{k}.$ Further using $W^{2,2}(\Omega)\hookrightarrow W^{1,6}(\Omega)$ one has $\||\nabla M|^2\|_{L^{3}(\Omega)}\leqslant\|\nabla M\|^2_{L^{6}(\Omega)}\leqslant C\|M\|^{2}_{W^{2,2}(\Omega)}.$ Consequently \eqref{boundednes}$_{6}$ follows.\\[3.mm]
	\textbf{The operator $\mathcal{F}_k:X\to Z$ is continuous:} To show this assertion we consider an arbitrary sequence $\{w_j\}\subset X$ and $w\in X$ such that $w_j\to w$ in $X$ as $j\to\infty$. We prove that $\{w_j\}$ possesses a subsequnece $\{w_{j'}\}$ for which $\mathcal{F}_k(w_{j'})\to \mathcal{F}_{k}(w)$ in $Z$. Obviously, if $\mathcal{F}_k$ were not continuous, we would find $\{ \tilde w_j\}$ converging to $\tilde w$ in $X$ with $\|\mathcal{F}_k(\tilde w_j)-\mathcal{F}_k(\tilde w)\|_Z\geq m>0$ for all $j$, which contradicts the existence of a subsequence $\{\tilde w_ {j'}\}$ such that $\mathcal{F}_k(\tilde w_{j'})\to \mathcal{F}_k(\tilde w)$. Let us fix $\{w_ j\}$ converging to $w$ in $X$. We select a subsequence $\{w_{j'}\}$ such that each component of $\{w_{j'}\}$, in particular $\{M_{j'}\},\{\nabla M_ {j'}\}, \{\phi_ {j'}\}$ converge also a.e.\ in $\Omega$ to the corresponding component of $w$. The passage $j'\to\infty$ in linear terms of $\mathcal{F}_k(w_{j'})$ follows due to their boundedness. We explain in details the passage $j'\to\infty$ in more involved nonlinear terms. Namely, due to the a.e.\ convergence of $\{\phi_{j'}\}$ and $\{\nabla M_ {j'}\}$ and assumption \eqref{XiAssum} we obtain that $H_0(\phi_{j'},\phi_k)|\nabla M_{j'}|^2\to H_0(\phi,\phi_k)|\nabla M|^2$ a.e.\ in $\Omega$ and the convergence in $L^3(\Omega)$ then follows by Lemma~\ref{Lem:GenDomConv} and the embedding $W^{2,2}(\Omega)\hookrightarrow W^{1,6}(\Omega)$. One obtains similarly that due to the a.e convergence of $\{\phi_{j'}\}$, $\{M_{j'}\}$ and the embedding $W^{2,2}(\Omega)\hookrightarrow L^{12}(\Omega)$ the sequence $\{H_0(\phi_{j'},\phi_k)(|M_{j'}|^2-1)^2\}$ converges to $H_0(\phi,\phi_k)(|M|^2-1)^2$ in $L^3(\Omega)$. For the limit passage in the remaining nonlinear terms we employ analogous arguments to that ones used for showing their boundedness. Hence we obtain that $\mathcal{F}_k(w_{j'})\to \mathcal{F}_k(w)$ as $j'\to\infty$. This proves the continuity of the map $\mathcal{F}_{k}:X\rightarrow Z.$\\[2.mm]
	We now observe that $Z$ is compactly embedded in $Y.$ Hence from the boundedness and continuity of $\mathcal{F}_{k}:X\to Z$, and the linearity of the inclusion $Z$ into $Y,$ the compactness and continuity of the map $\mathcal{F}_{k}:X\to Y$ follows.\\[2.mm]
	Finally, one aims to show the existence of a $w_{k+1}\in X$ satisfying
	\begin{equation}\label{fxdpntmap}
	\begin{array}{l}
	\mathcal{N}_{k}(w_{k+1})=\mathcal{F}_{k}(w_{k+1})\,\,\mbox{in}\,\, Y.
	\end{array}
	\end{equation}
	In fact, it suffices to show the existence of a fixed point of the operator $\mathcal{F}_{k}\circ \mathcal{N}^{-1}_{k}$ on $Y$, i.e., the existence of $h_{k+1}\in Y$ satisfying
	\begin{equation}\label{2ndfxd}
	\begin{array}{l}
	h_{k+1}=(\mathcal{F}_{k}\circ \mathcal{N}^{-1}_{k})h_{k+1}\,\,\mbox{in}\,\,Y,
	\end{array}
	\end{equation}
	since from the invertibility of $\mathcal{N}_{k}:X\to Y,$ one can obtain $w_{k+1}\in X$  satisfying \eqref{fxdpntmap} by using $w_{k+1}=\mathcal{N}^{-1}_{k}(h_{k+1})$.\\
	To prove the existence of a fixed point of \eqref{2ndfxd} we apply the Leray-Schauder fixed point theorem \cite[Theorem 10.3]{GilTr01} to the compact and continuous operator $\mathcal{F}_{k}\circ \mathcal{N}^{-1}_{k}$. To this end we verify that:
	\begin{equation}\label{lerayfxpt}
	\begin{array}{l}
	\mbox{There exists}\,\, r>0 \,\,\mbox{such that if}\,\, h\in Y\,\, \mbox{solves}\,\, h=\lambda(\mathcal{F}_{k}\circ \mathcal{N}^{-1}_{k})h\,\,\mbox{with}\,\,\lambda\in[0,1],\\
	\mbox{then it holds}\,\,\|h\|_{Y}\leqslant r.
	\end{array}
	\end{equation}
	Let $h\in Y$ satisfy $h=\lambda(\mathcal{F}_{k}\circ \mathcal{N}^{-1}_{k})h$ in $Y$ with some $\lambda\in [0,1]$. Then
	$$w=(v,M,\phi,\mu)=\mathcal{N}_{k}^{-1}h,$$
	fulfills
	\begin{equation}\label{LkFk}
	\begin{array}{l}
	\mathcal{N}_{k}(w)-\lambda\mathcal{F}_{k}(w)=0\,\,\mbox{in}\,\, Y.
	\end{array}
	\end{equation}
	We first show that
	\begin{equation}\label{boundw}
	\|w\|_{X}\leqslant C_{k}
	\end{equation}
	with $C_k$ independent of $\lambda\in[0,1]$, from which \eqref{lerayfxpt} follows due to the boundedness of $\mathcal{N}_k$. Therefore we now focus on showing \eqref{boundw}. One can explicitly recall the definitions of $\mathcal{N}_{k}$ and $\mathcal{F}_{k}$ from \eqref{Lk} and \eqref{Fk}, test the first component of \eqref{LkFk} by $v$, the second component by $-\dvr(\xi(\phi_k)\nabla M)+\frac{\xi(\phi_{k})}{\alpha^{2}}(|M|^{2}M-M_{k})$, the third component by $\frac{\phi-\phi_{k}}{h}$ and the fourth component by $\mu$, use \eqref{algebricidnty}, Lemma~\ref{algebriclem} (similarly as we obtained \eqref{energy2stp} from \eqref{energy1stp} during the derivation of the discrete version of the energy inequality in the first part of the proof) and drop some positive terms from the left hand side (exactly as we have obtained \eqref{discreteestimate} from \eqref{energy2stp}) to furnish:
	\begin{equation}\label{energylambda}
	\begin{split}
	&\frac{\lambda}{h}\left(\frac{1}{2}\int_{\Omega}|v|^{2}-\frac{1}{2}\int_{\Omega}|v_{k}|^{2}\right)+\nu\int_{\Omega}|\nabla v|^{2}+\frac{\lambda}{h}\left(\frac{1}{2}\int_{\Omega}\xi(\phi)|\nabla M|^{2}
	-\frac{1}{2}\int_{\Omega}\xi(\phi_{k})|\nabla M_{k}|^{2}\right)\\
	& +\frac{\lambda}{h}\left(\frac{1}{4\alpha^{2}}\int_{\Omega}\xi(\phi)\bigl(|M|^{2}-1\bigr)^{2}-\frac{1}{4\alpha^{2}}\int_{\Omega}\xi(\phi_{k})\bigl(|M_{k}|^{2}-1\bigr)^{2}\right)\\
	& +\int_{\Omega}\left|\mbox{div}(\xi(\phi_{k})\nabla M)-\frac{\xi(\phi_{k})}{\alpha^{2}}(|M|^{2}M-M_{k})\right|^{2}+\frac{1}{h}\left(\frac{\eta}{2}\int_{\Omega}|\nabla\phi|^{2}-\frac{\eta}{2}\int_{\Omega}|\nabla\phi_{k}|^{2}\right)\\
	&+\frac{1}{h}\left(\frac{1}{4\eta}\int_{\Omega}(\phi^{2}-1)^{2}-\frac{1}{4\eta}\int_{\Omega}(\phi_{k}^{2}-1)^{2}\right)+\int_{\Omega}|\nabla \mu|^{2}+(1-\lambda)\overline\mu ^{2}+\frac{1-\lambda}{h}\overline{\phi}^2\\
	&-\frac{1-\lambda}{h}\overline\phi\overline\phi_k +(1-\lambda)\overline M \int_\Omega\left(-\mbox{div}(\xi(\phi_{k})\nabla M)+\frac{\xi(\phi_{k})}{\alpha^{2}}(|M|^{2}M-M_{k})\right)\leqslant0.
	\end{split}
	\end{equation}
	We used the notation $\displaystyle\overline f=\int_\Omega f$ for $f\in L^1(\Omega)$. Let us note that the attainment of the inequality \eqref{energylambda} is independent of the values of $\lambda\in[0,1]$.
	
	One now uses Young's and H\"{o}lder's inequality to estimate the terms appearing on the last line of \eqref{energylambda}:
	\begin{equation}\label{PhiProdEst}
	\begin{split}
	\left|\frac{1-\lambda}{h}\overline\phi\overline\phi_k\right|\leq &\frac{1-\lambda}{h}\left(\gamma|\Omega|^3\int_\Omega|\phi|^4+c\gamma^{-\frac{1}{3}}|\Omega|\int_\Omega|\phi_k|^\frac{4}{3}\right)\\
	\leq &\frac{1-\lambda}{h}\left(3\gamma|\Omega|^3\left(\int_\Omega(\phi^2-1)^2+\frac{1}{2}|\Omega|\right)+c\gamma^{-\frac{1}{3}}|\Omega|\int_\Omega|\phi_k|^\frac{4}{3}\right),
	\end{split}
	\end{equation}
	\begin{equation}\label{Jensen}
	\begin{split}
	&\left|(1-\lambda)\overline{M} \int_{\Omega}\left(-\mbox{div}(\xi(\phi_{k})\nabla M)+\frac{\xi(\phi_{k})}{\alpha^{2}}(|M|^{2}M-M_{k})\right)\right|\\
	&\leqslant (1-\lambda)\epsilon|\Omega|\int_{\Omega}\left|\mbox{div}(\xi(\phi_{k})\nabla M)-\frac{\xi(\phi_{k})}{\alpha^{2}}(|M|^{2}M-M_{k})\right|^{2}+c\frac{1-\lambda}{\epsilon}\overline{M}^{2}
	\end{split}
	\end{equation}
	for some positive parameters $\gamma,\epsilon>0$.\\
	Since $(1-\lambda)$ is non negative and always bounded by one, for small enough choice of the parameters $\gamma,\epsilon>0,$ the first terms on the right hand side of \eqref{PhiProdEst} and \eqref{Jensen} can be absorbed respectively by the seventh and fifth summands appearing on the left hand side of \eqref{energylambda}. One still needs to estimate the second term on the right hand side of \eqref{Jensen}. In that direction we recall from \eqref{LkFk} that $M\in W^{2,2}_n(\Omega)$ solves the following equation:
	\begin{equation}\label{Mlambda}
	\begin{split}
	\lambda\int_{\Omega}\left(\frac{M-M_{k}}{h}+(v\cdot\nabla)M\right)\cdot\widetilde{\psi}_2+(1-\lambda)\overline{M}\overline{\widetilde\psi_2}=\int_{\Omega}-\xi(\phi_{k})\nabla M\cdot\nabla\widetilde\psi_2-\frac{\xi(\phi_{k})}{\alpha^{2}}(|M|^{2}M-M_{k})\cdot\widetilde\psi_2
	\end{split}
	\end{equation}
	for all $\widetilde\psi_2\in W^{1,2}(\Omega)$.
	Setting $\widetilde{\psi}_{2}=M$ in \eqref{Mlambda}, using \eqref{algebricidnty} and the incompressibility of $v,$ we infer:
	\begin{equation}\label{estimateMMk}
	\begin{split}
	&\frac{\lambda}{2h}\left(\int_{\Omega}|M|^{2}-\int_{\Omega}|M_{k}|^{2}+\int_{\Omega}|M-M_{k}|^{2}\right)+\int_{\Omega}\xi(\phi_{k})|\nabla M|^{2}+\int_{\Omega}\frac{\xi(\phi_k)}{\alpha^2}|M|^{4}\\
	&+(1-\lambda)\overline{M}^{2}=\int_{\Omega}\frac{\xi(\phi_{k})}{\alpha^{2}}M\cdot M_{k}.
	\end{split}
	\end{equation}
	Once again one uses Young's inequality with $\delta>0$ to estimate $$\left|\int_{\Omega}M\cdot M_{k}\right|\leqslant \delta\int_{\Omega}|M|^{4}+c\delta^{-\frac{1}{3}}\int_{\Omega}|M_{k}|^{\frac{4}{3}}.$$
	We choose suitably small value of the parameter $\delta$, use \eqref{XiAssum}$_{2}$ and $\lambda\leq 1$ to have in particular the following from \eqref{estimateMMk}:
	\begin{equation}\label{normboundM}
	\ \int_{\Omega}|\nabla M|^{2}+\int_{\Omega} |M|^{4}+(1-\lambda)\overline{M}^{2}\leqslant C_{k},
	\end{equation}
	where $C_{k}>0$ is independent of $\lambda>0.$ For small enough choice of the parameters $\gamma>0,$ $\epsilon>0,$ using \eqref{normboundM}, \eqref{PhiProdEst} and \eqref{Jensen} in \eqref{energylambda} we obtain:
	\begin{equation}\label{energylambda*}
	\begin{split}
	&\frac{\lambda}{h}\left(\frac{1}{2}\int_{\Omega}|v|^{2}-\frac{1}{2}\int_{\Omega}|v_{k}|^{2}\right)+\nu\int_{\Omega}|\nabla v|^{2}+\frac{\lambda}{h}\left(\frac{1}{2}\int_{\Omega}\xi(\phi)|\nabla M|^{2}
	\ -\frac{1}{2}\int_{\Omega}\xi(\phi_{k})|\nabla M_{k}|^{2}\right)\\
	&\ +\frac{\lambda}{h}\left(\frac{1}{4\alpha^{2}}\int_{\Omega}\xi(\phi)\bigl(|M|^{2}-1\bigr)^{2}-\frac{1}{4\alpha^{2}}\int_{\Omega}\xi(\phi_{k})\bigl(|M_{k}|^{2}-1\bigr)^{2}\right)\\
	&\ +\frac{1}{2}\int_{\Omega}\left|\mbox{div}(\xi(\phi_{k})\nabla M)-\frac{\xi(\phi_{k})}{\alpha^{2}}(|M|^{2}M-M_{k})\right|^{2}+\frac{1}{h}\left(\frac{\eta}{2}\int_{\Omega}|\nabla\phi|^{2}-\frac{\eta}{2}\int_{\Omega}|\nabla\phi_{k}|^{2}\right)\\
	&\ +\frac{1}{h}\left(\frac{1}{8\eta}\int_{\Omega}(\phi^{2}-1)^{2}-\frac{1}{4\eta}\int_{\Omega}(\phi_{k}^{2}-1)^{2}\right)+\int_{\Omega}|\nabla \mu|^{2}+(1-\lambda)\overline{\mu}^{2}\leqslant C_{k}.
	\end{split}
	\end{equation}
	One uses \eqref{energylambda*} and Poincar\'e's inequality (since $v$ solves homogeneous Dirichlet boundary condition) to render $\|v\|_{W^{1,2}(\Omega)}+\|\phi\|_{W^{1,2}(\Omega)}\leqslant C_{k}.$  We conclude $\|M\|_{W^{1,2}(\Omega)}\leqslant C_{k}$ independently of $\lambda$ from \eqref{normboundM}. The first term of \eqref{energylambda*}$_{3}$ provides
	\begin{equation}\label{divMinq}
	\left\|\dvr(\xi(\phi_{k})\nabla M)-\frac{\xi(\phi_{k})}{\alpha^{2}}(|M|^{2}M-M_{k})\right\|_{L^{2}(\Omega)}\leqslant C_{k}.
	\end{equation}
	Since $W^{1,2}(\Omega)\hookrightarrow L^{6}(\Omega)$ one has $\|M\|_{L^{6}(\Omega)}\leqslant C_{k}$ and hence in view of \eqref{divMinq} one furnishes
	\begin{equation*}
	\|\dvr(\xi(\phi_{k})\nabla M)\|_{L^{2}(\Omega)}\leqslant C_{k}.
	\end{equation*}
	Since $\phi_{k}\in W^{2,2}(\Omega)$ and $M\in W^{1,2}(\Omega)$, one concludes similarly to the corresponding proof for $M$, cf.\ arguments leading to \eqref{Mkp3}, that first $\|\Delta M\|_{L^\frac{3}{2}(\Omega)}\leq C_k$. Recalling that $\partial_{n}M\mid_{\partial\Omega}=0,$ and $M$ solves an elliptic problem we get $\|M\|_{W^{2,\frac{3}{2}}(\Omega)}\leqslant C_{k}$ that finally by a bootstrap argument implies $\|M\|_{W^{2,2}(\Omega)}\leqslant C_k$.\\
	It follows directly from \eqref{energylambda*} that $\|\nabla\mu\|_{L^{2}(\Omega)}\leqslant C_{k}$. To conclude that $\|\mu\|_{W^{1,2}(\Omega)}\leqslant C_{k}$ by Lemma~\ref{Lem:PoincareAverage} it is sufficient to show that
	\begin{equation}\label{MuAvEst}
	\left|\overline\mu\right|\leq C_k.
	\end{equation}
	One observes that \eqref{MuAvEst} immediately follows from the bound on the last term appearing in the left hand side of \eqref{energylambda*} for $\lambda\in[0,\frac{1}{2})$. For the case $\lambda\in[\frac{1}{2},1]$ we test 
	\begin{equation}\label{PhiEq}
	-\Delta_N \phi+\frac{1}{\eta}(\phi^3-\phi_k)=\lambda\Bigl(\mu-H_0(\phi,\phi_k)\frac{|\nabla M|^2}{2}+\frac{H_0(\phi,\phi_k)}{4\alpha^2}(|M|^2-1)^2\Bigr) +(1-\lambda)\overline\phi
	\end{equation}
	by one and use the bound of $H_{0}(\phi,\phi_k)$ in $L^{\infty}(\Omega)$, $M$ in $W^{2,2}_{n}(\Omega)$ and $\phi$ in $W^{1,2}(\Omega)$ to arrive at \eqref{MuAvEst}. Hence \eqref{MuAvEst} is true independently of the values of $\lambda\in[0,1]$ and consequently the estimate $\|\mu\|_{W^{1,2}(\Omega)}\leqslant C_{k}$ follows.\\
	In conclusion we have proved inequality \eqref{boundw}. Hence there exists  $w_{k+1}=(v_{k+1},M_{k+1},\phi_{k+1},\mu_{k+1}) \in X$ satisfying the identity \eqref{fxdpntmap}. By construction this quadruple also solves the identities \eqref{identity1}--\eqref{identity4}.\\[2.mm]
	In order to show \eqref{regularityk1} one only needs to improve the regularity of $\phi_{k+1}$. We claim that $\phi_{k+1}\in W^{2,2}_{n}(\Omega).$ One recalls that $\phi_{k+1}$ solves an equation of the form \eqref{PhiEq} with $\lambda=1$ and $\partial_{n}\phi_{k+1}=0$ on $\partial\Omega$. Since $w_{k+1}\in X$ and $H_{0}(\phi_{k+1},\phi_{k})\in L^{\infty}(\Omega),$ it is not hard to observe that $\Delta\phi_{k+1}\in L^{2}(\Omega)$ and hence in view of the regularity properties of a solution to the Poisson equation with the homogeneous Neumann boundary condition one bootstraps the regularity to show $\phi_{k+1}\in W^{2,2}_{n}(\Omega).$ 
	 This finishes the proof of Theorem~\ref{existenceweaksoldiscrete}.
	\end{proof}
We now want to state an auxiliary result corresponding to the bound of $\displaystyle\left|\int_{\Omega} \mu\right|$ which will be used in the next section. 
\begin{lem}\label{boundintmu}
	Let \eqref{assumk} hold. Then there is a constant $\displaystyle C=C\left(\int_{\Omega}\phi_{0}\right)>0$ such that the following inequality is true
	\begin{equation*}
	\left|\int_{\Omega}\mu_{k+1}\right|\leqslant C\left(\|\nabla M_{k+1}\|^{2}_{L^{2}(\Omega)}+\|M_{k+1}\|^{4}_{L^{4}(\Omega)}+\|\phi_{k+1}\|^{3}_{L^{3}(\Omega)}+1\right).
	\end{equation*}  
\end{lem}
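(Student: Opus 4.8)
The plan is to test the fourth weak identity \eqref{identity4} with the constant function $\widetilde\psi_3\equiv 1$. Since then $\nabla\widetilde\psi_3=0$, the term $\eta\int_\Omega\nabla\phi_{k+1}\cdot\nabla\widetilde\psi_3$ disappears, and after rearranging \eqref{identity4} we obtain
\begin{equation*}
\int_\Omega\mu_{k+1}=\int_\Omega H_0(\phi_{k+1},\phi_k)\frac{|\nabla M_{k+1}|^2}{2}+\int_\Omega\frac{H_0(\phi_{k+1},\phi_k)}{4\alpha^2}\bigl(|M_{k+1}|^2-1\bigr)^2+\frac{1}{\eta}\int_\Omega\phi_{k+1}^3-\frac{1}{\eta}\int_\Omega\phi_k .
\end{equation*}
It then suffices to estimate the four terms on the right-hand side separately.

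For the first two terms I would use the pointwise bound $|H_0(\phi_{k+1},\phi_k)|\leq c_3$, which follows from the definition \eqref{H0} of $H_0$, the mean value theorem and assumption \eqref{XiAssum} (this is the bound already exploited, e.g., in obtaining \eqref{boundednes}$_6$). Together with the elementary pointwise inequality $(|M_{k+1}|^2-1)^2\leq |M_{k+1}|^4+1$ this yields
\begin{equation*}
\left|\int_\Omega H_0(\phi_{k+1},\phi_k)\frac{|\nabla M_{k+1}|^2}{2}\right|\leq\frac{c_3}{2}\|\nabla M_{k+1}\|_{L^2(\Omega)}^2,\qquad
\left|\int_\Omega\frac{H_0(\phi_{k+1},\phi_k)}{4\alpha^2}\bigl(|M_{k+1}|^2-1\bigr)^2\right|\leq\frac{c_3}{4\alpha^2}\bigl(\|M_{k+1}\|_{L^4(\Omega)}^4+|\Omega|\bigr),
\end{equation*}
while the third term is trivially bounded by $\frac{1}{\eta}\|\phi_{k+1}\|_{L^3(\Omega)}^3$.

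The only point requiring a little care is the last term $\frac{1}{\eta}\int_\Omega\phi_k$, which must be controlled by a constant depending on $\int_\Omega\phi_0$ but not on $k$. To this end I would exploit the mass conservation built into the discrete Cahn--Hilliard step: testing \eqref{identity3} with $\widetilde\psi_3\equiv 1$, using that $\int_\Omega\nabla\mu_{k+1}\cdot\nabla\widetilde\psi_3=0$ and that $\int_\Omega(v_{k+1}\cdot\nabla)\phi_k=\int_\Omega\dvr(\phi_k v_{k+1})=0$ by $\dvr v_{k+1}=0$ and $v_{k+1}\in\WND$ (zero trace), gives $\int_\Omega\phi_{k+1}=\int_\Omega\phi_k$. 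Iterating this identity down to the initialisation $\phi_0$ of the scheme shows $\int_\Omega\phi_k=\int_\Omega\phi_0$, hence $\bigl|\tfrac1\eta\int_\Omega\phi_k\bigr|=\tfrac1\eta\bigl|\int_\Omega\phi_0\bigr|$. Adding the four estimates then produces the asserted inequality with a constant $C$ depending only on $c_3$, $\alpha$, $\eta$, $|\Omega|$ and $\int_\Omega\phi_0$. I do not expect a genuine obstacle here; the only mildly non-routine observation is that the bound on $\int_\Omega\phi_k$ has to be extracted from the continuity-equation identity \eqref{identity3} rather than from \eqref{identity4} itself.
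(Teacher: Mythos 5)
Your proposal is correct and follows essentially the same route as the paper: test \eqref{identity4} with $\widetilde\psi_3\equiv 1$, bound $H_0$ in $L^\infty$ via the mean value theorem and \eqref{XiAssum}, estimate the cubic and quartic terms directly, and control $\int_\Omega\phi_k$ by testing \eqref{identity3} with $\widetilde\psi_3\equiv 1$ and iterating the resulting mass conservation $\int_\Omega\phi_{k+1}=\int_\Omega\phi_k$ down to $\int_\Omega\phi_0$. The paper's proof is identical in structure, only stated a little more tersely.
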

\begin{proof}
	First we choose $\widetilde\psi_{3}=1$ in the identities \eqref{identity3} and \eqref{identity4}. The identity \eqref{identity3} at once yields
	\begin{equation}\label{intphiconstant}
	\int_\Omega\phi_{k+1}=\int_{\Omega}\phi_{k},
	\end{equation}
	since $$ \int_{\Omega}(v_{k+1}\cdot\nabla)\phi_{k}=\int_{\Omega}\dvr(v_{k+1}\phi_{k})=0$$ by Green's formula.\\
	By iteration \eqref{intphiconstant} also provides
	\begin{equation}\label{equalsphi0}
	\int_{\Omega}\phi_{k+1}=\int_{\Omega}\phi_{k}=\int_{\Omega}\phi_{0}.
	\end{equation}
	Now we will use \eqref{identity4} with $\widetilde{\psi}_{3}=1.$ One observes that
	\begin{equation}\label{somebounds}
	\begin{split}
	\int_{\Omega}|H_{0}(\phi_{k+1},\phi_{k})(|M_{k+1}|^{2}-1)^{2}|&\leqslant c(\|M_{k+1}\|^{4}_{L^{4}(\Omega)}+1),\\
	 \int_{\Omega}|\phi^{3}_{k+1}|&\leqslant c\|\phi_{k+1}\|^{3}_{L^{3}(\Omega)},
	\end{split}
	\end{equation}
	for some positive constant $c>0.$\\
	Hence by using \eqref{equalsphi0} and \eqref{somebounds} in \eqref{identity4} the desired result follows.
\end{proof}
		\section{Proof of Theorem~\ref{Thm:Main}}\label{sec3} 
		From now on and until the end of this section we fix $T>0.$\\
		For the purposes of this section we will consider a strictly increasing sequence formed by the points $0=t_{0}<t_{1}<...<t_{k}<t_{k+1}<...,$ $k\in \mathbb{N}_{0},$ such that for fixed $N\in\eN,$  $h=\frac{1}{N}=t_{k+1}-t_{k}$, for each $k\in\mathbb{N}_{0}$. By the successive application of Theorem~\ref{existenceweaksoldiscrete} we aim to construct a sequence of solutions $\{(v_{k+1},M_{k+1},\phi_{k+1},\mu_{k+1})\},$ $k\in\mathbb{N}_{0}$ to problem \eqref{timediscretesystem}, by assuming $(v_{k},M_{k},\phi_{k})\in \LND\times W^{2,2}_n(\Omega)\times W^{2,2}_n(\Omega).$
     Recalling the regularity of initial data, it is clear that for the first application of Theorem~\ref{existenceweaksoldiscrete} one can not directly use  $(M_{0},\phi_{0})\in W^{1,2}(\Omega)\times W^{1,2}(\Omega)$. In order to overcome this issue, we employ the density of $W^{2,2}_n(\Omega)$ in $W^{1,2}(\Omega)$ that is discussed for the case of $\Omega$ of class $C^{1,1}$ in \cite[Remark 1.2 iii)]{Dro02}. Therefore we consider sequences $\{M^{N}_{0}\}\subset W^{2,2}_{n}(\Omega)$, $\{\phi^{N}_{0}\}\subset W^{2,2}_{n}(\Omega)$ such that 
	\begin{equation}\label{MN0Cnv}
	  M^{N}_{0}\to M_{0}\text{ in }W^{1,2}(\Omega)
	\end{equation}
    and
	\begin{equation}\label{PhiN0Cnv}
	\phi^{N}_{0}\to \phi_{0}\text{ in }W^{1,2}(\Omega)
	\end{equation}
    as $N\to\infty$. We adopt the notations used in \cite{AbDeGa113}, \cite{AbDeGa213} and \cite{AbelsWeber} in order to introduce suitable interpolation functions corresponding to the unknowns. We fix $N\in\eN$, set $h=\frac{1}{N}$ and define  piecewise constant interpolants corresponding to $\{v,M,\phi\}$ on $[-h,\infty)$ and to $\mu$ on $[0,\infty)$ as follows:
	 $$ v^N(t)=v_0,\ M^N(t)=M^N_0,\ \phi^N(t)=\phi^N_0 \qquad\mbox{for}\qquad t\in[-h,0)$$
	 and 
	 $$f^{N}(t)=f_{k+1}\qquad\mbox{for}\qquad t\in[kh,(k+1)h),$$
	 where $f^N(t)$ represents $v^N(t), M^N(t), \phi^N(t), \mu^N(t)$, and $f_{k}$ represents the corresponding $v_{k}, M_{k}, \phi_{k},\mu_{k}$ for $k\in\mathbb{N}_{0}$.\\ 
	 Next, we define a piecewise affine interpolant $\widetilde f^N$ for $k\in\eN_0$ by 
	 \begin{equation}\label{AffInterpolantDef}
	    \widetilde f^N(t)=\frac{(k+1)h-t}{h}f^N(t-h)+\frac{t-kh}{h}f^N(t)\text{ for }t\in[kh,(k+1)h).    
	 \end{equation}
	 We note that for our purposes it is sufficient to construct the sequences of piecewise affine interpolants $\{\widetilde v^N\}$, $\{\widetilde \phi^N\}$ and $\{\widetilde M^N\}$. Let us introduce the notation that is used throughout this section. We denote the shift in time and the difference quotient of a function $f$ as follows
	 \begin{equation}\label{shifttimef}
        \begin{split}
	  f_{h}(t)&=\left(\tau_{-h}f\right)(t)=f(t-h),\\
	 \partial_{t,h}^{-}f(t)&=\frac{1}{h}\left(f-f_h\right)(t).\\
	 \end{split}
	 \end{equation}
	 It follows from the latter definition and \eqref{AffInterpolantDef} that 
	 \begin{equation}\label{AffInterpProp}
	     \begin{split}
	         \tder\widetilde f^N(t)&=\partial^-_{t,h}f^N(t)\,\,\mbox{for all}\,t\in [kh,(k+1)h),\,k\in\mathbb{N}_{0},\\
	         \|\widetilde f^N\|_{L^p(0,\tau;X)}&\leq \|f^N\|_{L^p(0,\tau;X)}+\|f^N_h\|_{L^p(0,\tau;X)}\text{ for any }p\in[1,\infty]\,\,\mbox{and}\,\,\tau>0.
	     \end{split}
	 \end{equation}
	 We will also frequently use the following relation that allows us to express the difference of values at certain times of a piecewise constant intepolant  by the corresponding difference for a piecewise affine interpolant. Let $s=\tilde mh$ for $\tilde m\in\eN_0$ be given. We consider for $t\geq 0$ the difference $f^N(t+s)-f^N(t)$. Obviously, there is $\tilde k\in\eN_0$ such that $t\in [\tilde k h,(\tilde k+1)h)$ and $t+s\in [(\tilde k+\tilde m) h,(\tilde k+\tilde m+1)h)$. Then by the definitions of interpolants we obtain
	 \begin{equation}\label{InterpTimeDiffIdent}
	    f^N(t+s)-f^N(t)=f_{\tilde k+\tilde m+1}-f_{\tilde k+1}=\tilde f^N\left((\tilde k+\tilde m+1)h\right) - \tilde f^N\left((\tilde k+1)h\right)=\widetilde f^N(\tilde t+s)- \widetilde f^N\left(\tilde t\right)
	 \end{equation}
	 for $\tilde t=(\tilde k+1)h$.
	 
	 Now we will specify integral identities that are satisfied by interpolants $v^N, M^N, \phi^N, \mu^N$. For arbitrary $\tau\in (0,\infty)$ there exists a $\overline{k}_{\tau}\in\mathbb{N}_{0}$ such that $\tau\in[\overline k_{\tau}h,(\overline{k}_{\tau}+1)h).$ Further for a function $\psi_1\in L^ 2(0,\infty;V(\Omega))$ we set in \eqref{identity1} $\widetilde\psi_1=\int_{kh}^b\psi_1$, where 
	 $$b=\begin{cases}
	 (k+1)h &k<\overline{k}_{\tau},\\
	 \tau &k=\overline{k}_{\tau}
	 \end{cases}$$ 
	 and summing the resulting expressions over $k\in\{0,\ldots,\overline{k}_{\tau}\}$ we obtain
	 \begin{equation}\label{intidentity1}
	 \begin{split}
	 &\int_{0}^{\tau}\int_{\Omega} \partial^{-}_{t,h}(v^{N})\cdot\psi_{1}+\int_{0}^{\tau}\int_{\Omega}(v^{N}\cdot\nabla)v^{N}\cdot\psi_{1}+\int_{0}^{\tau}\int_{\Omega}\nabla\mu^{N}\phi^{N}_{h}\cdot\psi_{1}\\&-\int_{0}^{\tau}\int_{\Omega}\left(\frac{\xi(\phi^{N}_{h})}{\alpha^{2}}(|M^{N}|^{2}M^{N}-M^{N}_{h})\nabla M^{N}\right)\cdot\psi_{1}\\
	 &+\int_{0}^{\tau}\int_{\Omega}\left(\dvr(\xi(\phi^{N}_{h})\nabla M^{N})\nabla M^{N}\right)\cdot\psi_{1}
	 =-\nu\int_{0}^{\tau}\int_{\Omega}\nabla v^{N}\cdot\nabla\psi_{1},
	 \end{split}
	 \end{equation}
	 for all $\psi_{1}\in L^2(0,\infty;V(\Omega))$ and $0<\tau<\infty.$\\ 
	 Similarly using \eqref{identity2}--\eqref{identity4} and recalling that now the role of $M_{0}$ and $\phi_{0}$ are replaced respectively by $M^{N}_{0}$ and $\phi^{N}_{0}$ we infer
	 \begin{equation}\label{intidentity2}
	 \begin{split}
	 &\int_{0}^{\tau}\int_{\Omega}\partial^{-}_{t,h}M^{N}\cdot\psi_{2}+\int_{0}^{\tau}\int_{\Omega}(v^{N}\cdot\nabla)M^{N}\cdot\psi_{2}\\
	 & =\int_{0}^{\tau}\int_{\Omega}\left(\dvr({\xi(\phi^{N}_{h})}\nabla
	 M^{N})-\frac{\xi(\phi^{N}_{h})}{\alpha^{2}}(|M^{N}|^{2}M^{N}-M^{N}_{h})\right)\cdot\psi_{2},
	 \end{split}
	 \end{equation}
	 for all $\psi_{2}\in L^2(0,\infty;W^{1,2}(\Omega)),$ $0<\tau<\infty$ and 
	 \begin{equation}\label{intidentity3}
	 \begin{split}
	 &\int_{0}^{\tau}\int_{\Omega} \partial^{-}_{t,h}\phi^{N}\cdot\psi_{3}+\int_{0}^{\tau}\int_{\Omega}(v^{N}\cdot\nabla)\phi^{N}_{h}\cdot\psi_{3}=-\int_{0}^{\tau}\int_{\Omega}\nabla\mu^{N}\cdot\nabla\psi_{3}
	 \end{split}
	 \end{equation}
	 and 
	 \begin{equation}\label{intidentity4}
	 \begin{split}
	 &\int_{0}^{\tau}\int_{\Omega}\mu^{N}\psi_{3}-\int_{0}^{\tau}\int_{\Omega} H_{0}(\phi^{N},\phi^{N}_{h})\frac{|\nabla M^{N}|^{2}}{2}\psi_{3}-\int_{\Omega}\frac{H_{0}(\phi^{N},\phi^{N}_{h})}{4\alpha^{2}}(|M^{N}|^{2}-1)^{2}\psi_{3}\\
	 &=\eta\int_{0}^{\tau}\int_{\Omega}\nabla\phi^{N}\cdot\nabla\psi_{3}+\frac{1}{\eta}\int_{0}^{\tau}\int_{\Omega}((\phi^{N})^{3}-\phi^{N}_{h})\psi_{3},
	 \end{split}
	 \end{equation}
	 for all $\psi_{3}\in L^2(0,\infty;W^{1,2}(\Omega))$ and $0<\tau<\infty.$\\ 
	\subsection{Compactness of a sequence of interpolants}
	In this section we will provide arguments to pass to the limit $h\longrightarrow 0$ (equivalently $N\longrightarrow\infty$) in \eqref{intidentity1}--\eqref{intidentity4} in order to prove the existence of a weak solution to problem \eqref{diffviscoelastic*}.
	\subsubsection{Obtaining convergences in a weak sense}
	  All the convergences necessary for that passage are consequences of uniform bounds following from the energy inequality for interpolants $v^N, M^N, \phi^N, \mu^N$, which we now derive. Summing \eqref{discreteestimate} over $k$ we conclude
	 \begin{equation}\label{energyinterpolant}
	 \begin{split}
	 & E_{tot}(v^{N}(t),M^{N}(t),\phi^{N}(t))\\
	 & +\int_{0}^{t}\int_{\Omega}\left(\nu|\nabla v^{N}|^{2}+|\nabla\mu^{N}|^{2}+
	 \left|\mathrm{div}(\xi(\phi^{N}_{h})\nabla M^{N})-\frac{\xi(\phi^{N}_{h})}{\alpha^{2}}(|M^{N}|^{2}M^{N}-M^{N}_{h})\right|^{2}\right)\\
	 &\leqslant E_{tot}(v_0,M^{N}_0,\phi^{N}_0)
	 \end{split}
	 \end{equation}
	 for each $t\in h\mathbb{N}_{0}$. Moreover, taking into account that all the quantities involved in \eqref{energyinterpolant} are constant on intervals of the form $[kh,(k+1)h)$, $k\in\eN_{0}$, we conclude that \eqref{energyinterpolant} is satisfied for all $0< t< \infty$.
	 At this moment one recalls the definition of $E_{tot}$ from \eqref{defEtot}. The boundedness of $E_{tot}(v_{0},M^{N}_{0},\phi^{N}_{0})$ that follows from \eqref{MN0Cnv} and \eqref{PhiN0Cnv} implies:
	 \begin{alignat}{2}
	    &\{v^{N}\}&&\text{ is bounded in }L^{2}(0,T+1;W^{1,2}(\Omega))\cap L^{\infty}(0,T+1;L^{2}(\Omega)),\label{VNBound}\\
	    &\{M^{N}\}&&\text{ is bounded in } L^{\infty}(0,T+1;W^{1,2}(\Omega)),\label{MNBound}\\
	    & \{\phi^{N}\}&&\text{ is bounded in } L^{\infty}(0,T+1;W^{1,2}(\Omega)),\label{PhiNBound}\\
	    &\{\nabla\mu^{N}\} &&\text{ is bounded in } L^{2}(0,T+1;L^{2}(\Omega)),\label{GMUNBound}
	 \end{alignat}
	 \begin{equation}\label{MagnDissNBound}
	      \left\{\mathrm{div}(\xi(\phi^{N}_{h})\nabla M^{N})-\frac{\xi(\phi^{N}_{h})}{\alpha^{2}}(|M^{N}|^{2}M^{N}-M^{N}_{h})\right\}\text{ is bounded in }L^{2}(0,T+1;L^{2}(\Omega)).
	 \end{equation}
	 We note that bound \eqref{MNBound} follows from the uniform bound of $\{\nabla M^N\}$ in $L^2(0,T+1;L^2(\Omega))$ and the bound of $\{M^N\}$ in $L^\infty(0,T+1;L^4(\Omega))$, which is a direct consequence of \eqref{XiAssum}$_2$ and the uniform bound of $\{\xi(\phi^N)(|M^N|^2-1)^2\}$ in $L^\infty(0,T+1;L^1(\Omega))$.
	    By Lemma~\ref{boundintmu} we get
	 \begin{equation*}	     
	    \int_0^{T+1}\left|\int_\Omega \mu^N\right|\leqslant G(T+1)
	 \end{equation*}
	 for a monotone function $G:\mathbb{R}^{+}\to\mathbb{R}^{+}$. Combining the above estimate and \eqref{GMUNBound} we infer using Lemma~\ref{Lem:PoincareAverage}
	  \begin{equation}\label{MuNBound}	     
	    \{\mu^N\}\text{ is bounded in }L^2(0,T+1;W^{1,2}(\Omega))
	 \end{equation}
	 By definition, $\phi^{N}_{h}=\phi^{N}(t-h)$ and it coincides with $\phi^{N}_{0}$ in $[-h,0),$ this provides 
	 \begin{equation}\label{phiNh}
	 \phi^{N}_{h}\,\,\mbox{is bounded in}\,\, L^{\infty}(0,T+1;W^{1,2}(\Omega)).
	 \end{equation}
	 We obtain similarly 
	 \begin{equation}\label{MNh}
	  M^{N}_{h}\,\,\mbox{is bounded in}\,\, L^{\infty}(0,T+1;W^{1,2}(\Omega))
	  \end{equation}
	  and 
	  \begin{equation}\label{VNh}
	  v^{N}_{h}\,\,\mbox{is bounded in}\,\, L^{\infty}(0,T+1;L^2(\Omega)).
	  \end{equation}
	  In view of \eqref{VNBound}--\eqref{PhiNBound} and \eqref{MuNBound}, one has the following weak type convergences upto some subsequence (not explicitly relabeled):
	  \begin{equation}\label{weakconvergences}
	  \begin{alignedat}{2}
	   v^{N}&\rightharpoonup v&&\mbox{ in }L^{2}(0,T;W^{1,2}(\Omega)),\\[3.mm]
	    v^{N}&\rightharpoonup^* v&&\mbox{ in } L^{\infty}(0,T;L^{2}(\Omega)),\\[3.mm]
	    M^{N}&\rightharpoonup^* M &&\mbox{ in } L^{\infty}(0,T;W^{1,2}(\Omega)),\\[3.mm]
	   \phi^{N}&\rightharpoonup^* \phi &&\mbox{ in } L^{\infty}(0,T;W^{1,2}(\Omega)),\\[3.mm]
	   \mu^{N}&\rightharpoonup \mu &&\mbox{ in }L^{2}(0,T;W^{1,2}(\Omega)).\\[3.mm]
	  \end{alignedat}
	  \end{equation}
	 \subsubsection{Recovering strong convergences and related results}
	  \begin{itemize}
	  	\item \textit{Relative compactness of }$\mathit{\{v^N\}}$ \textit{w.r.t. the strong topology of} $\mathit{L^2(0,T;L^4(\Omega))}$\textit{ and relative compactness of }$\{\widetilde{v}^{N}\}$ \textit{w.r.t. the weak}$^*$\textit{ topology of}  $\mathit{L^{\infty}(0,T;L^{2}(\Omega))}$:\\
	  First we claim that up to a nonrelabeled subsequence 
     	  	\begin{equation*}
	  	    v^N\to v\text{ in }L^2(0,T;L^4(\Omega))\text{ as }N\to\infty.
	  	\end{equation*}
	  In that direction we first show that $\partial_{t,h}^{-}v^{N}$ is bounded in $L^{2}(0,T+1;(V(\Omega))'),$ where $(V(\Omega))'$ is the dual of $V(\Omega)$ with $\LND$ as the pivot space. To prove our claim we recall the identity \eqref{intidentity1} with $\psi_{1}\in L^{2}(0,\infty;V(\Omega))\hookrightarrow L^{2}(0,\infty;L^{\infty}(\Omega)).$ We will frequently use the embedding $W^{1,2}(\Omega)\hookrightarrow L^{6}(\Omega)$ and $L^{6}(\Omega)\times L^{2}(\Omega)\hookrightarrow L^{\frac{3}{2}}(\Omega)\hookrightarrow L^{1}(\Omega)$ in the following computation. One obtains from \eqref{intidentity1} that
	  	\begin{equation*}
	  	\begin{split}
	  	&\left|\int_0^{T+1} \langle \partial^{-} _{t,h}{v^N},\psi^1\rangle\right|\leqslant C\left(\|v^{N}\|_{L^{\infty}(0,T+1;L^{2}(\Omega))}\|\nabla v^{N}\|_{L^{2}(0,T+1;L^{2}(\Omega))}\|\psi_{1}\|_{L^{2}(0,T+1;L^{\infty}(\Omega))}\right.\\[3.mm]
	  	& +\|\nabla\mu^{N}\|_{L^{2}(0,T+1;L^{2}(\Omega))}\|\phi^{N}_{h}\|_{L^{\infty}(0,T+1;L^{6}(\Omega))}\|\psi_{1}\|_{L^{2}(0,T+1;L^{\infty}(\Omega))}\\[3.mm]
	  	& +\|\mathrm{div}(\xi(\phi^{N}_{h})\nabla M^{N})-\frac{\xi(\phi^{N}_{h})}{\alpha^{2}}(|M^{N}|^{2}M^{N}-M^{N}_{h})\|_{L^{2}(0,T+1;L^{2}(\Omega))}\|\nabla M^{N}\|_{L^{\infty}(0,T+1;L^{2}(\Omega))}\\[3.mm]
	  	&\quad\|\psi_{1}\|_{L^{2}(0,T+1;L^{\infty}(\Omega))}\\[3.mm]
	  	& +\|\nabla v^{N}\|_{L^{2}(0,T+1;L^{2}(\Omega))}\|\nabla\psi_{1}\|_{L^{2}(0,T+1;L^{6}(\Omega))}.
	  	\end{split}
	  	\end{equation*}
	  	Consequently, one has that 
	  	\begin{equation}\label{VNDiscTimeDerEst}
	  	    \{\partial^{-}_{t,h}v^{N}\}\text{ is bounded in } (L^{2}(0,T+1;V(\Omega)))'=L^{2}(0,T+1;(V(\Omega))').
	  	\end{equation}
	  	Next, combining \eqref{VNDiscTimeDerEst} with \eqref{AffInterpProp}$_1$, \eqref{VNBound} and \eqref{VNh} with \eqref{AffInterpProp}$_2$ we deduce that
	  	\begin{equation}\label{TildeVNBound}
	  	\{\widetilde v^N\}\text{ is bounded in }W^{1,2}(0,T+1;(V(\Omega))').
	  	\end{equation}
	  	Using \eqref{InterpTimeDiffIdent}, the latter bound and the embedding $W^{1,2}(0,T+1;(V(\Omega))')\hookrightarrow C^{0,\frac{1}{2}}([0,T+1];(V(\Omega))')$ we obtain 
	  	\begin{equation*} 
	  	    \|v^N(t+\tilde s)-v^N(t)\|_{(V(\Omega))'}=\|\widetilde v^N(\tilde t+\tilde s)-\widetilde v^N(\tilde t)\|_{(V(\Omega))'} \leq c\tilde s^\frac{1}{2}
	  	\end{equation*}
	  	for $t\in [0,T+1-\tilde s]$ with $\tilde s=\tilde mh$, $\tilde m\in\eN$ and $\tilde{s}<T+1.$ Hence we conclude
	  	\begin{equation*} 
	  	    \int_0^{T+1-\tilde s}\|v^N(t+\tilde s)-v^N(t)\|^2_{(V(\Omega))'}\leq c(T+1)\tilde s
	  	\end{equation*}	  
	  	with $c$ independent of $N$. Then we find $m\in\eN$ such that $T<mh\leq T+1$. As a consequence of Lemma~\ref{Lem:RefIneq} we have \begin{equation*} 
	  	    \int_0^{T-s}\|v^N(t+s)-v^N(t)\|^2_{(V(\Omega))'}\leq c(T+1)s
	  	\end{equation*} 
	  	for any $0<s<T.$ Taking also into account \eqref{VNBound} and the chain of embeddings $\WND\stackrel{C}{\hookrightarrow} L^4(\Omega)\hookrightarrow (V(\Omega)')$ Lemma~\ref{Lem:RelComp}  yields the existence of a nonrelabeled subsequence $\{v^N\}$ such that 
	  	\begin{equation}\label{VNStrongly}
	  	    v^N\to v\text{ in }L^2(0,T;L^4(\Omega))\text{ as }N\to\infty,
	  	\end{equation}
	  	and hence our claim.\\
	  	Our second claim is that up to a nonrelabeled subsequence
	 	\begin{equation}\label{tildVNWeaklyS}
	 	\widetilde v^N\rightharpoonup^* v\text{ in }L^\infty(0,T;L^2(\Omega))\text{ as }N\to\infty.
	  	\end{equation}
	 One observes
	 	\begin{equation}\label{stconv2}
	 	\widetilde{v}^{N}(t)-v^{N}(t)=(t-(k+1)h)\partial_{t,h}^{-}v^{N}(t),\quad\mbox{for}\quad t\in[kh,(k+1)h),\quad k\in\mathbb{N}_{0}.
	  	\end{equation}
	  	Since $|t-(k+1)h|\leqslant h=\frac{1}{N}\leqslant 1,$ \eqref{AffInterpProp}$_1$ and \eqref{stconv2} lead to
	  	\begin{equation*}
	   \|\widetilde{v}^{N}(t)-v^{N}(t)\|_{(V(\Omega))'}\leqslant h\|\partial_{t}\widetilde{v}^{N}(t)\|_{(V(\Omega))'}\quad\mbox{for}\quad t\in[0,\infty).
	 	\end{equation*}
	 	Hence \eqref{TildeVNBound} implies 
	  	\begin{equation}\label{TildeVNVNDiff}
	   \widetilde{v}^{N}-v^{N}\to 0\text{ in }L^2(0,T;(V(\Omega))')\text{ as }N\to\infty,
	 	\end{equation}
	 	from which 
	 	\begin{equation}\label{TVNStrongly}
	 	    \widetilde{v}^{N}\to v\text{ in }L^2(0,T;(V(\Omega))')\text{ as }N\to \infty
	 	\end{equation}    
	 	immediately follows by \eqref{VNStrongly} as 
	 	\begin{equation*}
	 	    \|\widetilde v^N-v\|_{L^2(0,T;(V(\Omega))')}\leq \|\widetilde v^N-v^N\|_{L^2(0,T;(V(\Omega))')}+\| v^N-v\|_{L^2(0,T;(V(\Omega))')}.
	 	\end{equation*}
	 	Hence in view of the bound of $\{\widetilde {v}^{N}\}$ in $L^{\infty}(0,T;L^{2}(\Omega))$ (which follows immediately by using \eqref{VNBound}, \eqref{VNh} and \eqref{AffInterpProp}$_{2}$) we conclude \eqref{tildVNWeaklyS}.
	\item \textit{Relative compactness of }$\mathit{\{\phi^N\},\{\phi_h^N\}}$\textit{ and }$\mathit{\{\widetilde \phi^N\}}$\textit{ w.r.t. the strong topology of } $\mathit{L^2(0,T;L^4(\Omega))}$:\\
	In that direction we will first verify that $\partial^{-}_{t,h}\phi^{N}$ is bounded in $L^{2}(0,T+1;(W^{1,2}(\Omega))').$ One uses \eqref{VNBound} and \eqref{phiNh} to furnish that $(v^{N}\cdot\nabla)\phi^{N}_{h}$ is bounded in $L^{2}(0,T+1;L^{\frac{3}{2}}(\Omega)),$ since
	  	\begin{equation*}
	  	\begin{split}
	  	L^{2}(0,T+1;W^{1,2}(\Omega))\times L^{\infty}(0,T+1;L^{2}(\Omega))&\hookrightarrow L^{2}(0,T+1;L^{6}(\Omega))\times L^{\infty}(0,T+1;L^{2}(\Omega))\\
	  	&\hookrightarrow L^{2}(0,T+1;L^{\frac{3}{2}}(\Omega)).
	  	\end{split}
	  	\end{equation*}
	  	Further one recalls from \eqref{MuNBound} that $\nabla\mu^{N}$ is bounded in $L^{2}(0,T+1;L^{2}(\Omega))$. Hence in view of \eqref{intidentity3} we conclude that
	  	\begin{equation}\label{TDerPhiNEst}
	   \{\partial^{-}_{t,h}\phi^{N}\}\text{ is bounded in }L^{2}(0,T+1;(W^{1,2}(\Omega))').
	    \end{equation}
	    Proceeding in the same way as we did in the previous item ($i.e$ as we have shown the bound of $\{\widetilde v^{N}\}$ in $W^{1,2}((W^{1,2}(\Omega))')\cap L^{\infty}(L^{2}(\Omega))$ and the convergence \eqref{VNStrongly} of $v^{N}$), we combine the properties of interpolants from \eqref{AffInterpProp} with bounds \eqref{GMUNBound} and \eqref{phiNh} to deduce that 
	    \begin{equation}\label{WTildeBound}
	        \{\widetilde \phi^N\}\text{ is bounded in }W^{1,2}(0,T+1;(W^{1,2}(\Omega))')\cap L^\infty(0,T+1;W^{1,2}(\Omega))
	    \end{equation}
	    and further due to the chain of the embeddings $W^{1,2}(\Omega)\stackrel{C}{\hookrightarrow}L^4(\Omega)\hookrightarrow(W^{1,2}(\Omega))'$ deduce the existence of a nonrelabeled subsequence $\{\phi^N\}$ such that 
	    \begin{equation}\label{PhiNStrongly}
	        \phi^N\to \phi\text{ in }L^2(0,T;L^4(\Omega))\text{ as }N\to\infty.
	    \end{equation}
	    The bounds in \eqref{WTildeBound} imply by the Aubin-Lions lemma the relative compactness of $\{\widetilde \phi^N\}$ w.r.t. the strong topology of $L^2(0,T;L^4(\Omega))$. Imitating steps leading to \eqref{TildeVNVNDiff} and \eqref{TVNStrongly} we obtain that up to a nonrelabeled subsequence
	    \begin{equation*}
	        \widetilde \phi^N\to \phi\text{ in }L^2(0,T;(W^{1,2}(\Omega))')\text{ as }N\to\infty.
	    \end{equation*}
	    Consequently, it follows that up to a nonrelabeled subsequence
	    \begin{equation}\label{TildePhiNStrongly}
	        \widetilde \phi^N\to \phi\text{ in }L^2(0,T;L^4(\Omega))\text{ as }N\to\infty.
	    \end{equation}
	    It remains to show that up to a nonrelabeled subseqence
	    \begin{equation}\label{PhiNhStrongly}
	        \phi^N_h\to \phi\text{ in }L^2(0,T; L^4(\Omega))\text{ as }N\to\infty.
	    \end{equation}
	    We recall that the Sobolev embedding implies $W^{\frac{3}{4},2}(\Omega)\hookrightarrow L^{4}(\Omega)$ and by the interpolation inequality, cf.\ \cite[Theorem 12.5.]{LioMag}, we get
	    \begin{equation}\label{PhiNNhDif}
	    \begin{split}
	       \|\phi^N_h-\phi^N\|_{L^2(0,T;L^4(\Omega))}\leq & c\|\phi^N_h-\phi^N\|_{L^2(0,T;W^{\frac{3}{4},2}(\Omega))}\\
	       \leq &c\|\phi^N_h-\phi^N\|^\frac{7}{8}_{L^2(0,T;W^{1,2}(\Omega))}\|\phi^N_h-\phi^N\|^\frac{1}{8}_{L^2(0,T;(W^{1,2}(\Omega))^{'})}.
	       \end{split}
	    \end{equation}
 In view of \eqref{shifttimef}, \eqref{AffInterpProp}$_{1}$ and the uniform bound \eqref{WTildeBound} we infer
	\begin{equation*}
	    \phi^N_h-\phi^N\to 0\text{ in }L^2(0,T;(W^{1,2}(\Omega))^{'})
	\end{equation*}
	as $N\to\infty$ (or equivalently as $h\rightarrow 0$). Moreover, recalling \eqref{GMUNBound} and \eqref{phiNh}, we deduce from \eqref{PhiNNhDif} that 
	\begin{equation}\label{PhiNhNDiffConv}
	       \phi^N_h-\phi^N\to 0\text{ in }L^2(0,T;L^4(\Omega)).
	        \end{equation}
	        As 
	        \begin{equation*}
	            \|\phi^N_h-\phi\|_{L^2(0,T;L^4(\Omega))}\leq \|\phi^N_h-\phi^N\|_{L^2(0,T;L^4(\Omega))}+\|\phi^N-\phi\|_{L^2(0,T;L^4(\Omega))},
	        \end{equation*}
	        we conclude \eqref{PhiNhStrongly} by \eqref{PhiNhNDiffConv} and \eqref{PhiNStrongly}.
	       	\item \textit{Relative compactness of }$\mathit{\{M^N\}}$\textit{ w.r.t. the strong topology of $L^{8}(0,T;L^{4}(\Omega))\cap L^{2}(0,T;W^{1,2}(\Omega)),$ } $\mathit{\{\widetilde{M}^N\}}$\textit{ w.r.t. the strong topology of $L^{2}(0,T;L^{4}(\Omega))$ } and $\mathit{\{M^N_{h}\}}$\textit{ w.r.t. the strong topology of $L^{2}(Q_{T})$ } : \\
	  	In that direction we first show that $\{\partial^{-}_{t,h}M^{N}\}$ is bounded in $L^{2}(0,T+1;L^{\frac{3}{2}}(\Omega))$. One first uses \eqref{VNBound} and \eqref{MNBound} to furnish that $\{(v^{N}\cdot\nabla)M^{N}\}$ is bounded in $L^{2}(0,T+1;L^{\frac{3}{2}}(\Omega)).$ Then in view of \eqref{PhiNBound} one has that\\ $\left\{\mathrm{div}(\xi(\phi^{N}_{h})\nabla M^{N})
	  	-\frac{\xi(\phi^{N}_{h})}{\alpha^{2}}(|M^{N}|^{2}M^{N}-M^{N}_{h})\right\}$ is bounded in $L^{2}(0,T+1;L^{2}(\Omega)).$ Hence using test functions $\psi_{2}\in L^{2}(0,T+1;L^{3}(\Omega))$ in \eqref{intidentity2}, which is legal due to the density of $W^{1,2}(\Omega)$ in $L^3(\Omega)$, one at once has that
	  	\begin{equation}\label{TDerMNEst}
	  	    \{\partial^{-}_{t,h}M^{N}\}\text{ is bounded in } L^{2}(0,T+1;L^{\frac{3}{2}}(\Omega)).
	  	\end{equation}
	  	Uniform bound \eqref{TDerMNEst} along with \eqref{AffInterpProp}, bounds \eqref{MNBound}, \eqref{MNh} and the definition of $\widetilde{M}^{N}$ (cf.\ \eqref{AffInterpolantDef}) furnish
	  	\begin{equation}\label{TildeMNBound}
	  	\{\widetilde M^N\}\text{ is bounded in }W^{1,2}(0,T+1;L^\frac{3}{2}(\Omega))\cap L^{\infty}(0,T+1;W^{1,2}(\Omega)).
	  	\end{equation}
	  	Since $W^{1,2}(\Omega)\stackrel{C}{\hookrightarrow}L^4(\Omega)\hookrightarrow L^\frac{3}{2}(\Omega),$ one can now imitate arguments leading to \eqref{VNStrongly} to conclude that up to a nonrelabeled subsequence
	  	\begin{equation*}
	  	    M^N\to M\text{ in }L^2(0,T;L^4(\Omega))\text{ as }N\to\infty.
	  	\end{equation*}
	  	We improve the latter strong convergence. For that one observes the following by interpolation:
	  	\begin{equation}\label{improvingstrngconv}
	  	\|M^{N}-M\|_{L^{8}(0,T;L^{4}(\Omega))}\leqslant \|M^{N}-M\|^{\frac{3}{4}}_{L^{\infty}(0,T;L^{6}(\Omega))}\|M^{N}-M\|^{\frac{1}{4}}_{L^{2}(0,T;L^{2}(\Omega))}.
	  	\end{equation}
	  	The boundedness of the first multiplier appearing on the right hand side of \eqref{improvingstrngconv} and the fact that the second multiplier converges to zero at once render:
	  	\begin{equation}\label{strngconvMnl8l4}
	  	M^{N}\longrightarrow M\text{ in } L^{8}(0,T;L^{4}(\Omega))\text{ as }N\to\infty.
	  	\end{equation}
	  	Similar arguments which were used to show \eqref{PhiNhStrongly}, namely the interpolation
	  	\begin{equation*}
	  	    \|M^N_h-M^N\|_{L^2(0,T;L^2(\Omega))}\leq c\|M^N_h-M^N\|^{\frac{1}{2}}_{L^2(0,T;W^{1,2}(\Omega))}\|M^N_h-M^N\|^\frac{1}{2}_{L^2(0,T;(W^{1,2}(\Omega))^{'})}
	  	\end{equation*}
	  	can also be employed to show that
	  	\begin{equation}\label{strngconvMnh}
	  	M^{N}_{h}\longrightarrow M\text{ in } L^{2}(Q_{T})\text{ as }N\to\infty,
	  	\end{equation}
	  	of course this can be improved but \eqref{strngconvMnh} is enough for our purpose.\\
	  	Next we want to improve the compactness of $\nabla M^{N}$ w.r.t. the strong topology of $L^{2}(0,T;L^{2}(\Omega))$ or in other words we will show that
	  	\begin{equation}\label{MNStrongC}
	  	M^{N}\longrightarrow M\quad\mbox{in}\quad L^{2}(0,T;W^{1,2}(\Omega)).
	  	\end{equation}
	  	For the proof of \eqref{MNStrongC} we will exploit the monotonic structure of $\dvr(\xi(\phi^{N}_{h})\nabla M^{N}).$\\
	  	 In the direction of the proof of \eqref{MNStrongC} one first recalls \eqref{TildeMNBound} and applies the Aubin-Lions lemma to obtain the relative compactness of $\{\widetilde M^N\}$ w.r.t. the strong topology of $L^2(0,T;L^4(\Omega))$. Since up to a nonrelabeled subsequences
	  	$\|\widetilde M^N-M^N\|_{L^2(0,T;L^\frac{3}{2}(\Omega))}\to 0,$ which can be obtained using the uniform bound of $\partial_{t}\widetilde{M}^{N}$ in $L^{2}(0,T;L^{\frac{3}{2}}(\Omega))$ (cf.\ \eqref{TildeMNBound}) and similar line of arguments used to show \eqref{TildeVNVNDiff}, we conclude
	  	\begin{equation}\label{strngconvtM}
	  	\widetilde{M}^{N} \to {M} \text{ in } L^{2}(0,T;L^{4}(\Omega))\text{  as }N\to\infty.
	  	\end{equation}
	  	Further it is worth noticing that due to \eqref{TildeMNBound} for a not explicitly relabeled subsequence of  $\{\partial_{t}\widetilde{M}^{N}\}$ the following holds
	  	\begin{equation}\label{weakconvdttMn}
	  	\partial_{t}\widetilde{M}^{N}\rightharpoonup \partial_{t}M\text{ in } L^{2}(0,T;L^{\frac{3}{2}}(\Omega))\text{ as }N\to\infty.
	  	\end{equation} 
	  	Next fixing $\tau=T$, setting $\psi_2(t)=0$ for $t>T$, using \eqref{AffInterpProp}$_{1}$ and integrating by parts in the first term on the right hand side of \eqref{intidentity2} we get 
	  	 \begin{equation}\label{intidentity*}
	  	 \begin{split}
	  	 &\int_{0}^{T}\int_{\Omega}\partial_{t}\widetilde{M}^{N}\cdot\psi_{2}+\int_{0}^{T}\int_{\Omega}(v^{N}\cdot\nabla)M^{N}\cdot\psi_{2}\\
	  	 & =-\int_{0}^{T}\int_{\Omega}\xi(\phi^{N}_{h})\nabla
	  	 M^{N}\cdot\nabla \psi_{2}-\int_{0}^{T}\int_{\Omega}{\xi(\phi^{N}_{h})}(|M^{N}|^{2}M^{N}-M^{N}_{h})\cdot\psi_{2},
	  	 \end{split}
	  	 \end{equation}
	  	where we have used that for a.e $0<t<T,$ $\partial_{n}M^{N}=0$ on $\partial\Omega,$ following its construction.\\
	  	Our goal now is to perform the passage $N\to\infty$ in the latter identity. To this end we need to justify convergences of terms appearing on the right hand side of \eqref{intidentity*}.	Using \eqref{PhiNhStrongly} one has the a.e.\ convergence of $\phi^{N}_{h}$ to $\phi.$ One then uses \eqref{XiAssum} to have up to a nonrelabeled subsequence
	   \begin{equation}\label{convxiphinh}
	   \xi(\phi^{N}_{h})\longrightarrow \xi(\phi)\quad\mbox{ a.e.\ in }Q_T
	   \end{equation} 
	   and by the Lebesgue dominated convergence theorem
	   also
	    \begin{equation}\label{convxiphinhstrong}
	    \xi(\phi^{N}_{h})\longrightarrow \xi(\phi)\quad\mbox{ in }L^q(Q_T),\text{ for any } 1\leq q<\infty.
	   \end{equation} 
	   The strong convergence \eqref{strngconvMnl8l4} and the weak$^{*}$ convergence of $M^{N}$ to $M$ in $L^{\infty}(0,T;L^{6}(\Omega))$ (which follows from \eqref{weakconvergences}$_3$ are enough to conclude that
	   \begin{equation}\label{weakconvMn2M}
	   \begin{array}{l}
	   |M^{N}|^{2}M^{N}\rightharpoonup |M|^{2}M\quad\mbox{in}\quad L^{4}(0,T;L^{\frac{3}{2}}(\Omega)).
	   \end{array}
	   \end{equation}
	   Next we observe that due to \eqref{XiAssum}$_2$ and \eqref{MNBound} we get that $\{\xi(\phi_h^N)\nabla M^N\}$ is bounded uniformly in $L^2(Q_T)$ therefore the latter sequence possesses a not explicitly relabeled subsequence that converges weakly in $L^2(Q_T)$. We identify the weak limit by combining \eqref{convxiphinhstrong} with \eqref{weakconvergences}$_3$ and deduce that
	   \begin{equation}\label{NonlL2weakly}
	   \xi(\phi_h^N)\nabla M^N\rightharpoonup \xi(\phi)\nabla M\text{ in }L^2(Q_T).
	   \end{equation}
	   Let us now perform the passage $N\to\infty$ in \eqref{intidentity*}. Since $\psi_{2}\in L^{2}(0,T;W^{1,2}(\Omega)),$ the passage to the limit $N\to \infty$ in both terms on the left hand side of \eqref{intidentity*} is straightforward  as \eqref{weakconvdttMn}, \eqref{VNStrongly} and \eqref{weakconvergences}$_3$ are available. For the passage in the terms on the right hand side we apply \eqref{NonlL2weakly} and \eqref{weakconvMn2M} together with \eqref{convxiphinhstrong}, \eqref{strngconvMnh} respectively. Consequently one has
	   \begin{equation}\label{intidentitylimit*}
	   \begin{split}
       &\int_{0}^{T}\int_{\Omega}\partial_{t}{M}\cdot\psi_{2}+\int_{0}^{T}\int_{\Omega}(v\cdot\nabla)M\cdot\psi_{2}\\
	   & =-\int_{0}^{T}\int_{\Omega}\xi(\phi)\nabla
	   M\cdot\nabla \psi_{2}-\int_{0}^{T}\int_{\Omega}{\xi(\phi)}(|M|^{2}M-M)\cdot\psi_{2}.
	   \end{split}
	   \end{equation}
	   We consider the difference of \eqref{intidentity*} and \eqref{intidentitylimit*} and further set $\psi_{2}=(M^{N}-M),$ which is possible since $(M^{N}-M)\in L^{2}(0,T;W^{1,2}(\Omega))$:
	   \begin{equation}\label{intidentitydifference*}
	   \begin{split}
	   &\int_{0}^{T}\int_{\Omega}\partial_{t}(\widetilde{M}^{N}-M)\cdot(M^{N}-M)+\int_{0}^{T}\int_{\Omega}((v^{N}\cdot\nabla)M^{N}-(v\cdot\nabla)M)\cdot(M^{N}-M)\\
	   & =\int_{0}^{T}\int_{\Omega}(\xi(\phi)\nabla
	   M-\xi(\phi^{N}_{h})\nabla
	   M^{N})\cdot\nabla (M^{N}-M)\\
	   & -\int_{0}^{T}\int_{\Omega}({\xi(\phi^{N}_{h})}(|M^{N}|^{2}M^{N}-M^{N}_{h})-{\xi(\phi)}(|M|^{2}M-M))\cdot(M^{N}-M).
	   \end{split}
	   \end{equation}
	   The identity \eqref{intidentitydifference*} can be re-written in the form:
	   \begin{equation}\label{intidentitydifferencerewritten}
	   \begin{split}
	   & \int_{0}^{T}\int_{\Omega} \xi(\phi^{N}_{h})|\nabla(M^{N}-M)|^{2}\\
	   &=-\int_{0}^{T}\int_{\Omega}\left(\xi(\phi^{N}_{h})-\xi(\phi)\right)\nabla M\cdot\nabla(M^{N}-M)
	    +\int_{0}^{T}\int_{\Omega}\partial_{t}(\widetilde{M}^{N}-M)\cdot(M^{N}-M)\\
	   &\quad +\int_{0}^{T}\int_{\Omega}\left((v^{N}\cdot\nabla)M^{N}-(v\cdot\nabla)M\right)\cdot(M^{N}-M)\\
	   &\quad +\int_{0}^{T}\int_{\Omega}\left({\xi(\phi^{N}_{h})}(|M^{N}|^{2}M^{N}-M^{N}_{h})-{\xi(\phi)}(|M|^{2}M-M)\right)\cdot(M^{N}-M)=\sum_{i=1}^{4}I^N_i.
	   \end{split}
	    \end{equation}
	    Our goal is to show that all $I^N_i$ vanish in the limit $N\to\infty$. In order to handle $I^N_1$, one first uses \eqref{convxiphinh}, \eqref{XiAssum}$_{2}$ and the Lebesgue dominated convergence theorem to show that
	    $$\left(\xi(\phi^{N}_{h})-\xi(\phi)\right)\nabla M\longrightarrow 0\quad\mbox{in}\quad L^{2}(Q_{T}).$$
	     This convergence along with \eqref{weakconvergences}$_3$ proves that $I^{N}_{1}$ converges to zero as $N\rightarrow\infty$. In view of \eqref{strngconvMnl8l4} and \eqref{weakconvdttMn} $I^N_2$ vanishes in the limit $N\to\infty$. $I^N_3$ tends to zero because of \eqref{strngconvMnl8l4}, \eqref{VNStrongly} and \eqref{weakconvergences}$_3$. Finally we will show that $I^{N}_{4}$ converges to zero as $N\rightarrow\infty.$ In view of \eqref{weakconvMn2M}, \eqref{strngconvMnh} and \eqref{convxiphinhstrong} with $q=12$ one in particular observes that
	     \begin{equation}\label{weakconvpolynom}
	     \begin{array}{l}
	     \left({\xi(\phi^{N}_{h})}(|M^{N}|^{2}M^{N}-M^{N}_{h})-{\xi(\phi)}(|M|^{2}M-M)\right)\rightharpoonup 0\quad\mbox{in}\quad L^{\frac{12}{7}}(0,T;L^{\frac{4}{3}}(\Omega)),
	     \end{array}
	     \end{equation}
	     which along with the strong convergence \eqref{strngconvMnl8l4} concludes that $I^N_4$ vanishes in the limit $N\to\infty$ as well.
	     Passing to the limit $N\to\infty$ on the bothe sides of \eqref{intidentitydifferencerewritten} we deduce due to \eqref{XiAssum}$_{2}$
	     \begin{equation*}
	         \lim_{N\to\infty}\|\nabla (M^N-M)\|_{L^2(Q_T)}^2\leq \lim_{N\to\infty}c_1^{-1} \int_{0}^{T}\int_{\Omega} \xi(\phi^{N}_{h})|\nabla(M^{N}-M)|^{2}= 0.
	     \end{equation*} This along with \eqref{strngconvMnl8l4} implies the desired claim \eqref{MNStrongC}.
	  \end{itemize}

	   \subsubsection{Identifying the weak limit of $\left\{\dvr\left(\xi(\phi^N_h)\nabla M^N\right)-\frac{\xi(\phi^N_h)}{\alpha^2}\left(|M^N|^2M^N-M^N_h\right)\right\}$ in $L^2(Q_T)$}
	  The goal now is to show that 
	  \begin{equation}\label{WeakCnvMonster}
	     \dvr\left(\xi(\phi^N_h)\nabla M^N\right)-\frac{\xi(\phi^N_h)}{\alpha^2}(|M^N|^2M^N-M^N_h)\rightharpoonup \dvr\left(\xi(\phi)\nabla M\right)-\frac{\xi(\phi)}{\alpha^2}(|M|^2M-M)\text{ in }L^2(Q_T)
	  \end{equation}
	  for a nonrelabeled subsequence, where the functions $\phi$ and $M$ were obtained in \eqref{weakconvergences}. We note that due to the regularity of $M$ the divergence of $\xi(\phi)\nabla M$ is understood in distributional sense.
	  Bound \eqref{MagnDissNBound} implies that the sequence under consideration possesses a weakly convergent subsequence in $L^2(Q_T)$, which we do not explicitly relabel. The remaining task is to identify the limit. To this end one observes from \eqref{NonlL2weakly} that
	  $$\int_{0}^{T}\int_{\Omega}\xi(\phi^N_h)\nabla M^N\nabla\psi\longrightarrow \int_{0}^{T}\int_{\Omega}\xi(\phi)\nabla M\nabla\psi,\quad\text{ for all }\,\psi\in\mathcal{D}(Q_T), $$
	  $i.e$ the convergence 
	  $$\dvr\left(\xi(\phi^N_h)\nabla M^N\right)\longrightarrow \dvr\left(\xi(\phi)\nabla M\right)$$
	  holds in $\mathcal{D}'(Q_T)$ ($i.e.$ in the sense of distribution).\\
	  In view of this distributional convergence along with \eqref{weakconvpolynom} one identifies the weak limit and concludes the proof of \eqref{WeakCnvMonster}.
	  \subsection{The energy inequality for the weak solution}
	  The goal of this section is to show that the quadruple $(v,M,\phi,\mu)$ obtained in \eqref{weakconvergences} satisfies the energy inequality.
	  As  the first step we note that from convergences \eqref{VNStrongly}, \eqref{strngconvMnl8l4}, \eqref{MNStrongC}, \eqref{PhiNStrongly}, \eqref{weakconvergences}$_4$ we conclude that we have up to a nonrelabeled subsequences for a.a. $t\in(0,T)$
	 \begin{equation}\label{FixedTimeCnv}
	     \begin{alignedat}{2}
	         v^N(t)\to &v(t) &&\text{ in }L^2(\Omega),\\
	         M^N(t)\to &M(t) &&\text{ in }L^4(\Omega),\\
	         \nabla M^N(t)\to &\nabla M(t) &&\text{ in }L^2(\Omega),\\
	         \phi^N(t)\to &\phi(t) &&\text{ in }L^4(\Omega),\\
	         \nabla \phi^N(t)\rightharpoonup &\nabla \phi(t) &&\text{ in }L^2(\Omega).
	     \end{alignedat}
	 \end{equation}
	 Next we show that  
	 \begin{equation}\label{SemicontEnergy}
	    E_{tot}(v(t),M(t),\phi(t))\leq \liminf_{N\to \infty}E_{tot}(v^N(t),M^N(t),\phi^N(t))\text{ for a.a. }t\in(0,T).
	 \end{equation}
	 We first fix a $t\in(0,T)$ such that convergences from \eqref{FixedTimeCnv} are available. Next taking into account the definition of $E_{tot}$ in \eqref{defEtot}, we pass to the limit $N\to\infty$ in terms involving only $v^N(t)$ and $\phi^N(t)$ ($i.e$ the terms which corresponds to the first, fourth and fifth terms of \eqref{defEtot}), using corresponding convergences \eqref{FixedTimeCnv}$_{1,4}$ and the weak lower semicontinuity of the norm in combination with \eqref{FixedTimeCnv}$_5$. Consequently one has 
	 \begin{equation}\label{lowersemicontfewterms}
	 \begin{array}{ll}
	 & \displaystyle\frac{1}{2}\int_{\Omega}|v(t)|^{2}+\frac{\eta}{2}\int_{\Omega}|\nabla\phi(t)|^{2}+\frac{1}{4\eta}\int_{\Omega}(\phi(t)^{2}-1)^{2}\\
	 &\displaystyle\leqslant \liminf_{N\to \infty}\left(\frac{1}{2}\int_{\Omega}|v^{N}(t)|^{2}+\frac{\eta}{2}\int_{\Omega}|\nabla\phi^{N}(t)|^{2}+\frac{1}{4\eta}\int_{\Omega}(\phi^{N}(t)^{2}-1)^{2}\right).
	 \end{array}
	 \end{equation}
	 Next the strong convergence of $\{\phi^N(t)\}$, $\{\nabla M^N(t)\}$ and $\{M^N(t)\}$ respectively, imply the convergence a.e.\ in $\Omega$ for a nonrelabeled subsequence. This almost everywhere convergence along with \eqref{XiAssum}, the  strong convergences of $\{\nabla M^N(t)\}$, $\{M^N(t)\}$ respectively, and Lemma~\ref{Lem:GenDomConv} allow to pass to the limit $N\to\infty$ and show
	 \begin{equation}\label{convmix} 
	 \begin{split}
	 \int_{\Omega}\xi(\phi^{N})|\nabla M^{N}|^{2}&\longrightarrow\int_{\Omega}\xi(\phi)|\nabla M|^{2},\\
	 \frac{1}{4\alpha^{2}}\int_{\Omega}\xi(\phi^{N})(|M^{N}|^{2}-1)^{2}&\longrightarrow \frac{1}{4\alpha^{2}}\int_{\Omega}\xi(\phi)(|M|^{2}-1)^{2}.
	 \end{split}
	 \end{equation}
	Combining \eqref{lowersemicontfewterms} and \eqref{convmix} one shows \eqref{SemicontEnergy}.\\
	 Applying \eqref{MN0Cnv}, \eqref{PhiN0Cnv} and the embedding $W^{1,2}(\Omega)\hookrightarrow L^4(\Omega)$ we get 
	 \begin{equation}\label{TimeZeroConv}
	     E_{tot}(v_0,M^N_0,\phi^N_0)\to E_{tot}(v_0,M_0,\phi_0) 
	 \end{equation}
	 by similar arguments as above.	 Hence using \eqref{SemicontEnergy}, \eqref{TimeZeroConv}, \eqref{weakconvergences}$_{1,5}$, \eqref{WeakCnvMonster} and the weak lower semicontinuity of the norm we deduce from \eqref{energyinterpolant} that the following inequality holds for a.a. $t\in(0,T)$
	 \begin{equation}\label{EnIneq}
	 \begin{split}
	     E_{tot}(v(t),M(t),\phi(t))+&\int_0^t\left(\nu\|\nabla v\|^2_{L^2(\Omega)}+\|\nabla \mu\|^2_{L^2(\Omega)}+\left\|\dvr(\xi(\phi)\nabla M)-\frac{\xi(\phi)}{\alpha^2}M(|M|^2-1)\right\|^2_{L^2(\Omega)}\right)\\
	     &\leq E_{tot}(v_0,M_0,\phi_0).
	     \end{split}
	 \end{equation}
	 \subsection{Continuity in time of \texorpdfstring{$v,M,\phi$}{triple}}
	 This section is devoted to the improvement of the regularity of functions $v, M, \phi$ obtained in \eqref{weakconvergences}. Namely, our goal is to show that
	  \begin{equation}\label{TContinuityWeakTop}
	  \begin{split}
	      v&\in C_w([0,T];\LND),\\
	      M&\in C_w\left([0,T];W^{1,2}(\Omega)\right),\\
	      M&\in C\left([0,T];L^2(\Omega)\right),\\
	      \phi&\in C_w([0,T];W^{1,2}(\Omega)),\\
	      \phi&\in C([0,T];L^2(\Omega)).
	  \end{split}
	  \end{equation}
	  As a consequence of the uniform bound in \eqref{VNDiscTimeDerEst} we get $\tder v\in L^2\left(0,T;(V(\Omega))'\right)$ implying that $v\in C\left([0,T];(V(\Omega))'\right)$. Combining this fact with $v\in L^\infty(0,T;\LND)$ we conclude \eqref{TContinuityWeakTop}$_1$, cf.\ \cite[Ch. III, Lemma 1.4]{Tem77}. For the quantities $M$ and $\phi$ we deduce the regularity in \eqref {TContinuityWeakTop}$_{2,4}$ similarly, as $\tder M\in L^2\left(0,T;L^{\frac{3}{2}}(\Omega)\right)$ and $\tder\phi\in L^2\left(0,T;(W^{1,2}(\Omega))'\right)$  due to \eqref{TDerMNEst}, \eqref{TDerPhiNEst} respectively and \\
	  $M\in L^\infty\left(0,T;W^{1,2}(\Omega)\right)$, $\phi\in L^\infty(0,T;W^{1,2}(\Omega))$. In particular $M\in L^2(0,T;W^{1,2}(\Omega))$ along with $\tder M\in L^2(0,T;(W^{1,2}(\Omega))')$ implies \eqref {TContinuityWeakTop}$_3$ and \eqref {TContinuityWeakTop}$_5$ follows by the same argument. 
		\subsection{Recovering the weak formulations}\label{weakformobtain}
 This section is devoted to the justification of the fact that the quadruple $(v,M,\phi,\mu)$ obtained in \eqref{weakconvergences} satisfies the weak formulation in \eqref{WeakForm}. This will be achieved by passing to the limit $N\to\infty$ in integral identities \eqref{intidentity1}, \eqref{intidentity2}, \eqref{intidentity3} and \eqref{intidentity4}.\\
 Let use first consider \eqref{intidentity1} with the test function $\psi_{1}\in C^{1}_{c}([0,T);V(\Omega)).$ First of all one recalls from \eqref{AffInterpProp}$_1$ that
 $$\partial_{t}\widetilde{v}^{N}(t)=\partial^{-}_{t,h}v^{N}(t).$$
 From \eqref{TVNStrongly} and \eqref{tildVNWeaklyS} it follows that up to a not explicitly relabeled subsequence
 \begin{equation}\label{almostevrywhere}
    \widetilde{v}^{N}(t)\rightharpoonup v(t)\text{ in } L^{2}(\Omega)\text{ for a.a. }t\in(0,T).
  \end{equation}
   Fixing $\tau\in(0,T)$ such that \eqref{almostevrywhere} holds we integrate by parts in time to deduce from \eqref{intidentity1}:\\
 \begin{equation}\label{intidentity1*}
 \begin{split}
 & -\int_{0}^{\tau}\int_{\Omega} \widetilde{v}^{N}\cdot\partial_{t}\psi_{1}+\int_{\Omega}\widetilde{v}^{N}(\tau)\cdot\psi_{1}(\tau)-\int_{\Omega} \widetilde{v}^{N}(0)\cdot\psi_{1}(0)+\int_{0}^{\tau}\int_{\Omega}(v^{N}\cdot\nabla)v^{N}\cdot\psi_{1}\\
 &+\int_{0}^{\tau}\int_{\Omega}\nabla\mu^{N}\phi^{N}_{h}\cdot\psi_{1}
 +\int_{0}^{\tau}\int_{\Omega}\left(\mathrm{div}\left((\xi(\phi^{N}_{h})\nabla M^N\right)-\frac{\xi(\phi^{N}_{h})}{\alpha^{2}}(|M^{N}|^{2}M^{N}-M^{N}_{h})\right)\nabla M^{N}\cdot\psi_{1}
 \\
 &=-\nu\int_{0}^{\tau}\int_{\Omega}\nabla v^{N}\cdot\nabla\psi_{1}.
 \end{split}
 \end{equation}
  In view of \eqref{tildVNWeaklyS} one can pass to the limit $N\to\infty$ in the first integral of \eqref{intidentity1*}, whereas \eqref{almostevrywhere} allows for the limit passage in the second term of the left hand side of \eqref{intidentity1*}. 
  Further by definition of $\tilde v^N(0)$, cf.\ \eqref{AffInterpolantDef}, we have $\widetilde{v}^{N}(0)=v^{N}(-h)=v_{0}$ and for the third term on the left hand side of \eqref{intidentity1*} we get 
  \begin{equation*}
      \int_{\Omega} \widetilde{v}^{N}(0)\cdot\psi_1(0)=\int_{\Omega} v_0\cdot\psi_1(0).
  \end{equation*}
 Convergences \eqref{VNStrongly} and \eqref{weakconvergences}$_1$ suffice for the passage to the limit $N\to\infty$ in the fourth term of the left hand side of \eqref{intidentity1*}. The strong convergence \eqref{PhiNhStrongly} and the weak convergence \eqref{weakconvergences}$_5$ allows us to perform the limit passage in the fifth term on the left hand side of \eqref{intidentity1*}. One now recalls the weak convergence \eqref{WeakCnvMonster} and the strong convergence \eqref{MNStrongC} which suffices for the limit passage $N\to\infty$ in the fifth  integral of \eqref{intidentity1*}. Finally in view of the weak convergence \eqref{weakconvergences}$_1$, one passes to the limit in the final term of \eqref{intidentity1*}.\\[3.mm]
 Hence we reach the following expression:
 \begin{equation}\label{intidentity1**}
 \begin{split}
 & -\int_{0}^{\tau}\int_{\Omega} {v}\cdot\partial_{t}\psi_{1}+\int_{\Omega}{v}(\tau)\cdot\psi_{1}(\tau)-\int_{\Omega}  v_{0}\cdot\psi_{1}(0)+\int_{0}^{\tau}\int_{\Omega}(v\cdot\nabla)v\cdot\psi_{1}\\
 &+\int_{0}^{\tau}\int_{\Omega}\nabla\mu\phi\cdot\psi_{1}
 -\int_{0}^{\tau}\int_{\Omega}\left(\frac{\xi(\phi^{N}_{h})}{\alpha^{2}}(|M|^{2}-1)M\nabla M\right)\cdot\psi_{1}\\
 &+\int_{0}^{\tau}\int_{\Omega}\left(\mathrm{div}(\xi(\phi)\nabla M)\nabla M\right)\cdot\psi_{1}
 =-\nu\int_{0}^{\tau}\int_{\Omega}\nabla v\cdot\nabla\psi_{1}
 \end{split}
 \end{equation}
 for almost all $\tau\in (0,T)$. For fixed $t\in(0,T)$ we find a sequence $\{\tau_k\}$ such that $\tau_k\to t$ and \eqref{intidentity1**} holds with $\tau=\tau_k$. Then using \eqref{TContinuityWeakTop}$_1$ and the fact that all integrands in terms with the integration over time are integrable with respect to time we conclude that \eqref{intidentity1**} holds also for $t$. 
 This implies \eqref{WeakForm}$_{1}.$\\
 Now we perform the passage $N\to\infty$ in \eqref{intidentity2} in order to furnish the weak integral formulation \eqref{WeakForm}$_{2}.$ In that direction we first choose
 $\psi_{2}\in C^{1}_{c}([0,T);W^{1,2}(\Omega)))$ in \eqref{intidentity2}, further it follows from \eqref{AffInterpProp}$_1$ that
 $$\partial_{t}\widetilde{M}^{N}(t)=\partial^{-}_{t,h}M^{N}(t),$$
 and use integration by parts in time and space variables in \eqref{intidentity2} to infer
 \begin{equation}\label{intidentity2*}
 \begin{split}
 &-\int_{0}^{\tau}\int_{\Omega}\widetilde{M}^{N}\cdot\partial_{t}\psi_{2}+\int_{\Omega}\widetilde{M}^{N}(\tau)\cdot\psi_{2}(\tau)-\int_{\Omega}\widetilde{M}^{N}(0)\cdot\psi_{2}(0)+\int_{0}^{\tau}\int_{\Omega}(v^{N}\cdot\nabla)M^{N}\cdot\psi_{2}\\
 & =\int_{0}^{\tau}\int_{\Omega}-\xi(\phi^{N}_{h})\nabla
 M^{N}\cdot\nabla\psi_2-\frac{\xi(\phi^{N}_{h})}{\alpha^{2}}(|M^{N}|^{2}M^{N}-M^{N}_{h})\cdot\psi_2,
 \end{split}
 \end{equation}
 for a.a. $\tau\in(0,T)$. Next we fix $\tau$ and a not relabeled subsequence such that $\widetilde{M}^N(\tau)\to M(\tau)$ in $L^2(\Omega)$, which is possible due to \eqref{strngconvtM}. Hence we can pass to the limit $N\to\infty$ in the second term on the left hand side of \eqref{intidentity2*}. Moreover, \eqref{strngconvtM} allows us to pass to the limit in the first integral of \eqref{intidentity2*}. 
  As $\widetilde{M}^{N}(0)=M^N_0$, we get by \eqref{MN0Cnv} for the third term on the left hand side of \eqref{intidentity2*}
 \begin{equation*}
 \begin{split}
  \int_{\Omega}\widetilde{M}^{N}(0)\cdot\psi_{2}(0)\longrightarrow \int_{\Omega}M_{0}\cdot\psi_{2}(0).
 \end{split}
 \end{equation*}
 One can pass to the limit in the fourth integral of \eqref{intidentity2*} by using convergences \eqref{VNStrongly} and \eqref{MNStrongC}. We pass to the limit in the first term on the right hand side of \eqref{intidentity2*} by \eqref{NonlL2weakly}. For the passage to the limit in the last term of \eqref{intidentity2*} we apply \eqref{weakconvpolynom}. All the arguments presented above allow to obtain \eqref{WeakForm}$_2$ for a.a. $t\in (0,T)$. In view of \eqref{TContinuityWeakTop}$_{3}$ we deduce \eqref{WeakForm}$_2$ for all $t\in (0,T)$ and the proof follows the similar line of arguments used to show \eqref{WeakForm}$_{1}$ from \eqref{intidentity1**}.\\[3.mm]
 Now let us obtain \eqref{WeakForm}$_{3}$ from \eqref{intidentity3} with the test function $\psi_{3}\in C^{1}_{c}\left([0,T);W^{1,2}(\Omega)\cap L^\infty(\Omega)\right).$ In the similar spirit of \eqref{intidentity1*} and \eqref{intidentity2*} we write \eqref{intidentity3} as follows
 \begin{equation}\label{intidentity3*}
 \begin{split}
 &-\int_{0}^{\tau}\int_{\Omega} \widetilde{\phi}^{N}\partial_{t}\psi_{3}+\int_{\Omega}\widetilde{\phi}^{N}(\tau)\psi_{3}(\tau)-\int_{\Omega}\widetilde{\phi}^{N}(0)\psi_{3}+\int_{0}^{\tau}\int_{\Omega}(v^{N}\cdot\nabla)\phi^{N}_{h}\psi_{3}\\
 &=-\int_{0}^{\tau}\int_{\Omega}\nabla\mu^{N}\cdot\nabla\psi_{3},
 \end{split}
 \end{equation}
 for a.a. $\tau\in(0,T)$. Analogously to previous cases we pass to the limit $N\to\infty$ in the first and the second term on the left hand side of \eqref{intidentity3*} by employing \eqref{TildePhiNStrongly}. 
Furthermore, for the passage to the limit in the third term one uses the fact that $\widetilde \phi^N(0)=\phi^N_0$ and the convergence \eqref{PhiN0Cnv}. Next we note that due to \eqref{phiNh} and \eqref{PhiNhStrongly} we have up to a nonrelabeled subsequence
 \begin{equation*}
     \phi^N_h\rightharpoonup^*\phi\text{ in }L^\infty(0,T;W^{1,2}(\Omega)).
 \end{equation*}
 The latter convergence and the strong convergence in \eqref{VNStrongly} are sufficient for the limit passage in the fourth integral of \eqref{intidentity3*}, while the fifth and the final integrand is easily handled by the weak convergence \eqref{weakconvergences}$_5$. The arguments presented so far proves the identity \eqref{WeakForm}$_{3}$ for a.a. $t\in(0,T).$ In order to prove the identity for all $t\in(0,T)$ one proceeds similarly to the previous cases by employing \eqref{TContinuityWeakTop}$_5$.\\[3.mm]
 Now we perform the passage to the limit $N\to\infty$ in \eqref{intidentity4} to obtain \eqref{WeakForm}$_{4}$. We pass to the limit in the first terms on the left and right hand side of \eqref{intidentity4} by using convergences \eqref{weakconvergences}$_{4,5}$. We are going to explain more elaborately the limit passage in the rest of the terms of \eqref{intidentity4}. In order to deal with the second and third term of \eqref{intidentity4} we first recall the definition of $H_{0}$ from \eqref{H0} and infer:
 \begin{equation*}
 H_{0}(\phi^{N},\phi^{N}_{h})=\begin{cases}
 \xi{'}(\phi^{N}_h) &\mbox{if}\qquad \phi^{N}=\phi^{N}_h,\\
 \xi'(\zeta_{t,t-h}) & \mbox{if}\qquad \phi^{N}\neq \phi^{N}_h,
 \end{cases}
 \end{equation*}
 where $\zeta_{t,t-h}$ is an element of the line segment with endpoints $\phi^{N}$ and $\phi^{N}_h$. Since $\xi(\cdot)\in C^{1}(\mathbb{R})$ (cf.\ \eqref{XiAssum}$_{1}$) and $\phi^{N}$ and $\phi^{N}_{h}$ a.e.\ converge to $\phi$ by \eqref{PhiNStrongly},  \eqref{PhiNhStrongly} respectively, one has that
 \begin{equation}\label{aeconvergence}
 \begin{array}{l}
 H_{0}(\phi^{N},\phi^{N}_{h})\quad\mbox{converges a.e.\ to}\quad \xi'(\phi).
 \end{array}
 \end{equation}
 Once again one uses the upper bound \eqref{XiAssum}$_{3}$ of $\xi'$ and convergence \eqref{MNStrongC} to deduce by Lemma~\ref{Lem:GenDomConv} that
 $$H_{0}(\phi^{N},\phi^{N}_{h})|\nabla M^{N}|^{2}\longrightarrow \xi'(\phi)|\nabla M|^{2}\quad\mbox{in}\quad L^{1}(0,T;L^{1}(\Omega)),$$
 which is enough to pass limit in the second term on the left hand side of \eqref{intidentity4} since $\psi_{3}\in C^{1}_{c}([0,T);W^{1,2}(\Omega)\cap L^{\infty}(\Omega))$.
 
 Further convergence \eqref{strngconvMnl8l4} renders that
 $$(|M^{N}|^{2}-1)^{2}\longrightarrow (|M|^{2}-1)^{2}\quad\mbox{in}\quad L^{2}(0,T;L^{1}(\Omega)),$$
 which along with \eqref{aeconvergence} furnishes by Lemma~\ref{Lem:GenDomConv} that
 $$H_{0}(\phi^{N},\phi^{N}_{h})(|M^{N}|^{2}-1)^{2}\longrightarrow \xi'(\phi)(|M|^{2}-1)^{2}\quad\mbox{in}\quad L^{1}(0,T;L^{1}(\Omega)).$$
 Hence one concludes the passage to the limit in the third term on the left hand side of \eqref{intidentity4}.
 
 Let us now consider the last term of \eqref{intidentity4}. Convergence \eqref{PhiNStrongly} provides the strong convergence
 $$(\phi^{N})^{2}\longrightarrow\phi^{2}\quad\mbox{in}\quad L^{1}(0,T;L^{2}(\Omega)),$$
 which along with convergence \eqref{weakconvergences}$_4$ implies in particular the following weak convergence
 \begin{equation}\label{weakconvphi3}
 \begin{array}{l}
 (\phi^N)^{3}\rightharpoonup \phi^{3}\quad\mbox{in}\quad L^{1}(0,T;L^{1}(\Omega)).
 \end{array}
 \end{equation} 
Convergence \eqref{weakconvphi3} along with \eqref{PhiNhStrongly} suffices to pass to the limit in the second term on the right hand side of \eqref{intidentity4}. This finishes the obtainment of the integral identities in \eqref{WeakForm}.
	   \subsection{The attainment of initial data \texorpdfstring{$v_0, M_0, \phi_0$}{InitC}}\label{attainmentinitial}
	   In this section, we prove \eqref{InitDataAtt}. We begin with the proof of the following identities
	  \begin{equation}\label{InitValIdent}
	    \begin{alignedat}{2}
	    v(0)=&v_0&&\text{ a.e.\ in }\Omega,\\
	    M(0)=&M_0&&\text{ a.e.\ in }\Omega,\\
	    \phi(0)=&\phi_0&&\text{ a.e.\ in }\Omega.
	    \end{alignedat}
	  \end{equation}
	  The special choice of a test function $\psi_1=\omega_1\omega_2$, where $\omega_1\in C^1_c([0,T)),$ $\omega_{1}(0)>0$ and $\omega_2\in V(\Omega)$ are arbitrary but fixed, in  \eqref{WeakForm}$_1$ (which is already proved in Section \ref{weakformobtain}) yields
	  \begin{equation*}
	    \int_\Omega v_0\cdot\omega_1(0)\omega_2=\lim_{t\to 0_+}\int_\Omega v(t)\cdot\omega_1(t)\omega_2=\int_\Omega v(0)\cdot\omega_1(0)\omega_2,
	  \end{equation*}
	  where the second equality follows by \eqref{TContinuityWeakTop}$_1.$The latter identity with the special choice of $\omega_2=v_0-v(0)$ which is possible due to the density of $V(\Omega)$ in $L^2_{\dvr}(\Omega)$ implies \eqref{InitValIdent}$_1$. The remaining identities from \eqref{InitValIdent} are shown by repeating the above arguments.
	  
	  Taking into account the regularity from \eqref{TContinuityWeakTop} we now claim that the energy inequality \eqref{EnIneq} holds for all $t\in[0,T]$. To this end we consider an arbitrary $t\in[0,T]$ and a sequence $\{t^k\}$ such that $t^k\geq t$, $t^k\to t$ as $k\rightarrow\infty$ and 
	   \begin{equation}\label{tkconv}
	   \begin{alignedat}{2}
	  \phi(t^{k})&\rightharpoonup \phi(t)&&\mbox{ in }W^{1,2}(\Omega),\\
	  \phi(t^{k})&\rightarrow \phi(t)&&\mbox{ in }L^{2}(\Omega),\\
	  M(t^{k})&\rightharpoonup M(t)&&\mbox{ in } W^{1,2}(\Omega),\\
	  M(t^{k})&\rightarrow M(t)&&\mbox{ in }L^{2}(\Omega).\\
	  \end{alignedat}
	  \end{equation}
	  and \eqref{EnIneq} holds for each $t^k$.
	  Consequently
	  \begin{equation}\label{phitkcnv}
	    \xi(\phi(t^k,\cdot))\to\xi(\phi(t,\cdot))\text{ a.e.\ in }\Omega\text.
	   \end{equation}
	    We note that the existence of such a sequence $\{t^k\}$ follows by \eqref{TContinuityWeakTop}$_{4,5}$. Due to the weak lower semicontinutiy of the norm in $L^2(\Omega),$ $L^{4}(\Omega)$ and \eqref{tkconv}$_{1,2}$, we obtain
	  \begin{equation}\label{1stPartEnLSC}
	  \begin{split}
	      &\liminf_{k\to\infty}\int_\Omega\left( \frac{\eta}{2}|\nabla\phi(t^k)|^2+\frac{1}{4\eta} (\phi(t^k)^2-1)^2\right)=\liminf_{k\to\infty}\int_\Omega\left(\frac{\eta}{2}|\nabla\phi(t^k)|^2+\frac{1}{4\eta} \left((\phi(t^k))^4-2(\phi(t^k))^2+1\right)\right)\\
	      &\geq \int_\Omega\left(\frac{\eta}{2}|\nabla\phi(t)|^2+\frac{1}{4\eta} \left( (\phi(t))^4-2(\phi(t))^2+1\right)\right)=\int_\Omega \left(\frac{\eta}{2}|\nabla\phi(t)|^2+\frac{1}{4\eta}(\phi(t)^2-1)^2\right).
	      \end{split}
	  \end{equation}
	  Next, thanks to the convexity of the function $|\cdot|^p$, $p\geq 1$ we have $|A|^p-|B|^p\geq p|B|^{p-2}B\cdot(A-B)$ for $A,B\in\mathbb{R}^m$. Using this fact, \eqref{tkconv}$_{3,4},$ \eqref{phitkcnv}, the Lebesgue dominated convergence theorem and \eqref{XiAssum}$_{2},$ we obtain
	  \begin{equation}\label{2ndPartEnLSC}
	  \begin{split}
	      &\liminf_{k\to\infty} \int_{\Omega}\left(\xi(\phi(t^k))|\nabla M(t^k)|^2+\frac{\xi(\phi(t^k))}{\alpha^2}(|M(t^k)|^2-1)^2\right)\\
	      &\geq \liminf_{k\to\infty} \int_{\Omega}\Bigl(\xi(\phi(t^k))\left(|\nabla M(t)|^2+2\nabla M(t)\cdot\left(\nabla M(t^k)-\nabla M(t)\right)\right)\\&+\frac{\xi(\phi(t^k))}{\alpha^2}\left(|M(t)|^4+4|M(t)|^2M(t)\cdot\left(M(t^k)-M(t)\right)-2|M(t^k)|^2+1\right)\Bigr)\\
	      &=\int_{\Omega}\left(\xi(\phi(t))|\nabla M(t)|^2+\frac{\xi(\phi(t))}{\alpha^2}(|M(t)|^2-1)^2\right)
	      \end{split}
	  \end{equation}
	   Altogether \eqref{1stPartEnLSC}, \eqref{2ndPartEnLSC} and \eqref{TContinuityWeakTop}$_1$ imply that \eqref{EnIneq} holds for all $t\in[0,T]$ and hence our claim.
	  
	  Hence we obtain
	  \begin{equation}\label{LSIneq}
	      \limsup_{t\to 0_+}E_{tot}(v(t),M(t),\phi(t))\leq E_{tot}(v_0,M_0,\phi_0).
	  \end{equation}
	  On the other hand using \eqref{TContinuityWeakTop} along with \eqref{InitValIdent} we get like at \eqref{1stPartEnLSC} and \eqref{2ndPartEnLSC}
	  \begin{equation*}
	      \liminf_{t\to 0_+}E_{tot}(v(t),M(t),\phi(t))\geq E_{tot}(v(0),M(0),\phi(0))=E_{tot}(v_0,M_0,\phi_0),
	  \end{equation*}
	  which in a combination with \eqref{LSIneq} yields
	  \begin{equation}\label{LimEn0}
	      \lim_{t\to 0_+}E_{tot}(v(t),M(t),\phi(t))= E_{tot}(v_0,M_0,\phi_0).
	  \end{equation}
	  Going back to the definition of $E_{tot}$ and employing the strong convexity of $|\cdot|^2$, i.e., $|A|^2-|B|^2\geq 2B\cdot(A-B)+2|A-B|^2$ for all $A,B\in\mathbb{R}^m$ and the convexity of $|\cdot|^4$ ($i.e.$ the inequality $|A|^4-|B|^4\geq 4|B|^{2}B\cdot(A-B)$ for all $A,B\in\mathbb{R}^m$),
	  it follows that for each $t\in(0,T)$
	  \begin{equation}\label{ineqdetot}
	  \begin{split}
	      E&_{tot}(v(t),M(t),\phi(t))-E_{tot}(v_0,M_0,\phi_0)\\
	      \geq&  \frac{1}{2}\int_\Omega 2v_0\cdot(v(t)-v_0)+\int_\Omega|v(t)-v_0|^2+\int_\Omega\left(\xi(\phi(t))-\xi(\phi_0)\right)|\nabla M_0|^2\\
	       &+\int_\Omega 2\xi(\phi(t))\nabla M_0\cdot(\nabla M(t)-\nabla M_0)+2\int_\Omega \xi(\phi(t))|\nabla M(t)-\nabla M_0|^2\\
	      &+\frac{1}{4\alpha^2}\int_\Omega \left(\xi(\phi(t))-\xi(\phi_0)\right)|M_0|^4 +\frac{1}{\alpha^2}\int_\Omega \xi(\phi(t))|M_0|^2M_0\cdot\left(M(t)-M_0\right)\\
	      &-\frac{1}{2\alpha^2}\int_\Omega  \left(\xi(\phi(t))|M(t)|^2-\xi(\phi_0)|M_0|^2\right)+\frac{1}{4\alpha^2}\int_\Omega  \left(\xi(\phi(t))-\xi(\phi_0)\right)\\
	      &+\eta\int_\Omega\nabla\phi_0\cdot\left(\nabla\phi(t)-\nabla\phi_0\right)+\eta\int_\Omega|\nabla\phi(t)-\nabla\phi_0|^2+\frac{1}{\eta}\int_\Omega\phi_0^3\left(\phi(t)-\phi_0\right) \\&-\frac{1}{2\eta}\int_\Omega \left(\phi(t)^2-\phi_0^2\right)=\sum_{m=1}^{13}I_m(t).
	  \end{split}
	  \end{equation}
	  The task now is to prove \eqref{InitDataAtt} by taking the limsup $t\to 0_+$ on both sides of the inequality \eqref{ineqdetot}. To this end we consider an arbitrary sequence $\{t^k\}$ such that $t^k\to 0_+$ as $k\to\infty$. The sequence $\{t^k\}$ possesses a subsequence $\{t^{k'}\}$ such that 
	  \begin{equation}\label{PhiTkPPcnv}
	    \phi(t^{k'})\to \phi_0,\ M(t^{k'})\to M_0\text{ a.e.\ in }\Omega\text{ as }k'\to\infty
	  \end{equation}
	  by \eqref{TContinuityWeakTop}$_{3,5}$ and \eqref{InitValIdent}$_{2,3}$.
	  Then using \eqref{PhiTkPPcnv}, \eqref{XiAssum} and the Lebesgue dominated convergence theorem we conclude that
	  \begin{equation}\label{LimImI}
	    \lim_{k^{'}\to\infty}I_m(t^{k'})=0\text{ for }m=3,6,9.
	  \end{equation}
	  Employing additionally \eqref{TContinuityWeakTop}$_{2,3}$, \eqref{XiAssum}$_{2}$ and \eqref{InitValIdent} we deduce
	  \begin{equation}\label{LimImII}
	    \lim_{k^{'}\to\infty}I_m(t^{k'})=0\text{ for }m=4,7.
	  \end{equation}
	  	By \eqref{TContinuityWeakTop}$_3$, \eqref{PhiTkPPcnv} and Lemma~\ref{Lem:GenDomConv} we infer 
	  	  \begin{equation}\label{LimImIII}
	    \lim_{k^{'}\to\infty}I_8(t^{k'})=0.
	  \end{equation}
	  Moreover, it immediately follows from \eqref{TContinuityWeakTop}$_{1,4,5}$, \eqref{InitValIdent} that
	  \begin{equation}\label{LimImIV}
	    \lim_{k^{'}\to\infty}I_m(t^{k'})=0\text{ for }m=1,10,12,13.
	  \end{equation}
	  Using \eqref{LimImI}--\eqref{LimImIV} we deduce from \eqref{LimEn0} and \eqref{ineqdetot} that
	  \begin{equation*}
	      \limsup_{k '\to\infty}\left(I_2(t^{k'})+I_5(t^{k'})+I_{11}(t^{k'})\right)\leq 0.
	  \end{equation*}
	  Applying \eqref{XiAssum}$_2$ in the latter identity we arrive at
 	  \begin{equation*}
	      \limsup_{k '\to\infty} \left(\frac{1}{2}\|v(t^{k'})-v_0\|^2_{L^2(\Omega)}+c_1\|\nabla M(t^{k'})-\nabla M_0\|^2_{L^2(\Omega)}+\eta\|\nabla\phi(t^{k'})-\nabla\phi_0\|^2_{L^2(\Omega)}\right)\leq 0.
	  \end{equation*}
	  Combining the above inequality with \eqref{TContinuityWeakTop}$_{3,5}$ we get 
	  \begin{equation}\label{LimTkP}
	      \lim_{k '\to\infty}\left(\|v(t^{k'})-v_0\|_{L^2(\Omega)}+\|M(t^{k'})- M_0\|_{W^{1,2}(\Omega)}+\|\phi(t^{k'})-\phi_0\|_{W^{1,2}(\Omega)}\right)=0.
	  \end{equation} 
	 As $\{t^{k}\}$ was selected arbitrarily and possesses a subsequence that satisfies the latter identity, \eqref{InitDataAtt} has to hold. Indeed, if \eqref{InitDataAtt} were not true, one could find a sequence $\{t^{\bar{k}}\}$ with $t^{\bar{k}}\to 0_+$ from which can not be selected a subsequence satisfying \eqref{LimTkP}. This contradicts our finding that any sequence $\{t^k\}$ with $t^k\to 0$ possesses a subsequence $\{t^{k'}\}$ for which \eqref{LimTkP} holds.
\subsection{Conclusion}	Here we gather the results obtained so far to conclude the proof of Theorem \ref{Thm:Main}.
\begin{itemize}
    \item The inclusion in functional spaces (cf.\ \eqref{DefWS}) which are weakly continuous and continuous in time follows from \eqref{TContinuityWeakTop}. That $v$ belongs to $L^{2}(0,T;W^{1,2}_{0,\mbox{div}}(\Omega))$ and $\mu$ belongs to $L^{2}(0,T;W^{1,2}(\Omega))$ follow respectively from \eqref{weakconvergences}$_{1}$ and \eqref{weakconvergences}$_{5}.$ In order to complete the proof of \eqref{DefWS} we just have to show that $M\in W^{1,2}(0,T;L^{\frac{3}{2}}(\Omega)).$ For that in view of \eqref{TildeMNBound} one first observes that up to a nonrelabeled subsequence, $\widetilde{M}^{N}$ weakly converges in the space $W^{1,2}(0,T;L^{\frac{3}{2}}(\Omega))$ and using \eqref{strngconvtM} this weak limit can be identified with $M.$ Consequently $M\in W^{1,2}(0,T;L^{\frac{3}{2}}(\Omega)).$ This concludes the proof of \eqref{DefWS}.
    \item The weak formulation \eqref{WeakForm} solved by $(v,M,\phi,\mu)$ is proved in Section \ref{weakformobtain}.
    \item The attainment of the initial data in the sense of \eqref{InitDataAtt} is obtained in Section \ref{attainmentinitial}.
\end{itemize}
In view of the above items we finally conclude the proof of Theorem~\ref{Thm:Main}.  
	 
	   \section{Further comments}\label{sec:comments}
	 In this section we would like to comment on how to adapt our strategy in order to incorporate non degenerate variable viscosity and mobility in system~\eqref{diffviscoelastic*}. We will also discuss an extension of the model \eqref{diffviscoelastic*} in case the fluids under consideration are viscoelastic in nature and the elastic behavior is modeled by a regularized equation for the deformation gradient.  
	 \subsection{The case of non degenerate variable viscosity and mobility} It is natural to consider that the viscosity and mobility coefficients are functions of the order parameter, i.e., $\nu=\nu(\phi)$ and $m=m(\phi)$ and in this case the terms $\nu\Delta v$ in \eqref{diffviscoelastic*}$_{1}$ and $\Delta\mu$ in \eqref{diffviscoelastic*}$_{4}$ are replaced respectively by $\mbox{div}\,(2\nu(\phi)\mathbb{D}(v))$ where $\mathbb{D}(v)=\frac{1}{2}\left(\nabla v+(\nabla v)^\top\right)$ and $\dvr(m(\phi)\nabla\mu).$ We further assume the smoothness, non degeneracy and boundedness of these coefficients: 
	 \begin{equation}\label{condbndul}
	 \begin{array}{l}
	 m\in C^{1}(\mathbb{R}),\,\,\nu\in C^{0}({\mathbb{R}}),\,\,0<K_{0}<\nu(\cdot),\,\,m(\cdot)<K_{1},
	 \end{array}
	 \end{equation}
	 for some positive constants $K_{0}$ and $K_{1}.$ The energy dissipation in this case takes the form:
	 \begin{equation}\label{energyvariable}
	 \begin{split}
	 \frac{d}{dt}E_{tot}(v,M,\phi)=& -\left(\int_{\Omega}2\nu(\phi)|\mathbb{D} (v)|^{2}
	 +\int_{\Omega}m(\phi)|\nabla\mu|^{2}\right.\\ &\left.+\int_{\Omega}\left|\mathrm{div}(\xi(\phi)\nabla M)-\frac{\xi(\phi)}{\alpha^{2}}(|M|^{2}-1)M\right|^{2}\right),
	 \end{split}
	 \end{equation}
	 where $E_{tot}(v,M,\phi)$ is given by \eqref{defEtot}. One can note that a bound of $v$ in $L^{2}(0,T;W^{1,2}_{0,\dvr}(\Omega)),$ for any $T>0,$ can be obtained from the formal energy identity \eqref{energyvariable} by using Korn's inequality.\\
	 One can prove a result on global existence of weak solutions for this model exactly in the same functional framework \eqref{functionalspaces}. One can proceed by adapting the time discretization used in Section \ref{sec2}. Under the assumptions \eqref{XiAssum}, \eqref{assumk} and \eqref{condbndul} there is no particular difficulty to prove a existence result of the form Theorem~\ref{existenceweaksoldiscrete} and recover the solution in the space \eqref{regularityk1} for the time discretized problem. In \eqref{Lk} one just needs to define the duality pairing $\langle \mathcal{A}v,\widetilde{\psi}_{1} \rangle$ as
	 $$\langle\mathcal{A}v,\widetilde\psi_1\rangle=\int_{\Omega}2\nu(\phi_{k})\mathbb{D}(v)\cdot\mathbb{D} \widetilde\psi_1\text{ for all }\widetilde\psi_1\in\WND$$
	 and replace $\Delta_{N}\phi$ by $\mbox{div}_{N}(m(\phi_{k})\nabla\phi)$ where:
$$
	 	\langle-\mbox{div}_{N}(m(\phi_{k})\nabla\phi,\widetilde\psi_3\rangle=\int_{\Omega}m(\phi_{k})\nabla\phi\cdot\nabla\widetilde\psi_3\text{ for all }\widetilde\psi_3\in W^{1,2}(\Omega),
	 $$
	and perform minor modifications in what follows.\\
	The line of arguments used in Section \ref{sec3} can also be adapted with minor modifications to pass from the discretized setting to the original time dependent problem. In particular, we point out that estimates following from the energy inequality for interpolants along with arguments similar to that ones leading to \eqref{NonlL2weakly} can be used to infer the following convergences:
	\begin{equation*}
	\nu(\phi_h^N)\mathbb{D} (v^N)\rightharpoonup \nu(\phi)\mathbb{D} (v)\text{ in }L^2(Q_T)\,\,\mbox{and}\,\,m(\phi_h^N)\nabla \mu^N\rightharpoonup m(\phi)\nabla \mu\text{ in }L^2(Q_T),
	\end{equation*} 
	which are sufficient for the limit passage $N\to\infty$ in the new terms appearing in the weak formulation as contributions from $\mbox{div}\,(2\nu(\phi)\mathbb{D}(v))$ and $\mbox{div}(m(\phi)\nabla\mu).$
	\subsection{The case of viscoelastic fluids} Here we assume the diffuse interface bi fluid system is viscoelastic in nature and following the article \cite{liubenesova}, the elastic behavior is modeled by using a regularized evolution equation for the deformation gradient $F:Q_{T}\longrightarrow\mathbb{R}^{d\times d},$\\
	\begin{equation}\label{dgradient}
	\begin{split}
	 \partial_{t}F+(v\cdot\nabla)F-\nabla vF=\kappa\Delta F&\quad\mbox{in}\quad Q_{T},\\
	 F=0&\quad\mbox{on}\quad \Sigma_{T},\\
	F(\cdot,0)=F_{0}(\cdot)=\mathbb{I}&\quad\mbox{in}\quad \Omega,
	\end{split}
	\end{equation}
	where $\mathbb{I}$ is the $d\times d$ identity matrix and the regularizing coefficient $\kappa$ is a positive constant and can be arbitrarily small. The momentum equation \eqref{diffviscoelastic*}$_{1}$ should now be modified due to the contribution in the stress tensor from the elastic energy and now it reads as follows:
	\begin{equation}\label{modifiedmomentum}
	\begin{split}
	&\partial_{t}v+(v\cdot\nabla)v-\nu\Delta v+\nabla p\\
	&=\mu\nabla\phi+\frac{\xi(\phi)}{\alpha^{2}}((|M|^{2}-1)M)\nabla M-\dvr(\xi(\phi)\nabla M)\nabla M+\mbox{div}(FF^{T})\,\,\mbox{in}\,\,Q_{T}.
	\end{split}
	\end{equation} 
	The modified system \eqref{modifiedmomentum}-\eqref{diffviscoelastic*}$_{2}$-\eqref{diffviscoelastic*}$_{3}$-\eqref{diffviscoelastic*}$_{4}$-\eqref{diffviscoelastic*}$_{5}$-\eqref{diffviscoelastic*}$_{6}$-\eqref{diffviscoelastic*}$_{7}$-\eqref{diffviscoelastic*}$_{8}$-\eqref{dgradient} admits of a energy dissipation of the form
	\begin{equation}\nonumber
	\begin{split}
	\frac{d}{dt}\overline{E}_{tot}(v,M,F,\phi)=& -\left(\int_{\Omega}|\nabla v|^{2}
	+\int_{\Omega}|\nabla\mu|^{2}+\kappa\int_{\Omega}|\nabla F|^{2}\right.\\ &\left.+\int_{\Omega}\left|\mathrm{div}(\xi(\phi)\nabla M)-\frac{\xi(\phi)}{\alpha^{2}}(|M|^{2}-1)M\right|^{2}\right),
	\end{split}
	\end{equation}
	where 
		\begin{equation}\nonumber
		\begin{split} \overline{E}_{tot}(v,M,F,\phi)=&\frac{1}{2}\int_{\Omega}|v|^{2}+\int_{\Omega}\xi(\phi)|\nabla M|^{2}+\frac{1}{4\alpha^{2}}\int_{\Omega}\xi(\phi)(|M|^{2}-1)^{2}\\
		&+\frac{1}{2}\int_{\Omega}|F|^{2}+\frac{\eta}{2}\int_{\Omega}|\nabla\phi|^{2}+\frac{1}{4\eta}\int_{\Omega}(\phi^{2}-1)^{2}.
		\end{split}
		\end{equation}
		One can solve system \eqref{modifiedmomentum}-\eqref{diffviscoelastic*}$_{2}$-\eqref{diffviscoelastic*}$_{3}$-\eqref{diffviscoelastic*}$_{4}$-\eqref{diffviscoelastic*}$_{5}$-\eqref{diffviscoelastic*}$_{6}$-\eqref{diffviscoelastic*}$_{7}$-\eqref{diffviscoelastic*}$_{8}$-\eqref{dgradient} by adapting our time discretization strategy with suitable modifications. In view of the regularization of the evolution equation for the deformation gradient one has enough compactness for the limit passage in new non linear terms appearing in the weak formulation as a contribution from $\mbox{div}(FF^{T}),$ $(v\cdot\nabla)F$ and $\nabla vF.$ Consequently the system can be solved in the functional framework \eqref{functionalspaces} along with:
		$$F\in C_w([0,T];L^2(\Omega))\cap L^2(0,T;W^{1,2}(\Omega)).$$
\appendix
	\section{}
	In this section we collect several auxiliary assertions. The following lemma deals with the improvement of the regularity of solution to the system of Poisson equations.
	\begin{lemma}\label{Lem:PoissReg}
	Let $\Omega\subset\eR^d$ be a bounded domain of class $C^{1,1}$, $f\in L^p(\Omega)^m$,\ $m\in\eN$, $p\in(1,\infty)$ and $u:\Omega\to\eR^m$ be a solution to $\Delta u=f$ in $\Omega$, $\partial_n u=0$ on $\partial\Omega$. Then there is $c=c(p,\Omega,m)>0$
	\begin{equation}\label{W2Est}
	    \|u\|_{W^{2,p}(\Omega)}\leq c(\|u\|_{W^{1,p}(\Omega)}+\|f\|_{L^p(\Omega)}).
	\end{equation}
	\begin{proof}
	   Let us fix $i\in \{1,\ldots,m\}$. Then $u_i$ fulfills the equation $\Delta u_i=f_i$ and we have 
	   \begin{equation*}
	       \|u_i\|_{W^{2,p}(\Omega)}\leq c(\|f_i\|_{L^p(\Omega)}+\|u_i\|_{W^{1-\frac{1}{p},p}(\partial(\Omega))}+\|u_i\|_{W^{1,p}(\Omega)}),
	   \end{equation*}
	   cf.\ \cite[Section 2.3.3]{Gr85}. The continuity of the trace operator from $W^{1,p}(\Omega)$ to $W^{1-\frac{1}{p},p}(\partial\Omega)$ yields
	   	 \begin{equation*}
	       \|u_i\|_{W^{2,p}(\Omega)}\leq c(p,\Omega)(\|f_i\|_{L^p(\Omega)}+\|u_i\|_{W^{1,p}(\Omega)}).
	   \end{equation*}
	   Finally, we sum over $i\in\{1,\ldots m\}$ in the latter inequality to conclude \eqref{W2Est}.
	\end{proof}
	\end{lemma}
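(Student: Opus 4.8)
The plan is to reduce the vector-valued problem to the scalar one and then invoke the standard $L^p$-regularity theory for the Neumann problem on a domain of class $C^{1,1}$. Since the Laplacian acts componentwise and the condition $\partial_n u=0$ is imposed on each component separately, I would first fix $i\in\{1,\dots,m\}$ and regard $u_i$ as a solution of the scalar problem $\Delta u_i=f_i$ in $\Omega$, $\partial_n u_i=0$ on $\partial\Omega$, with $f_i\in L^p(\Omega)$ and $u_i\in W^{1,p}(\Omega)$.

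For this scalar Neumann problem I would appeal to the $W^{2,p}$-regularity theory for second order elliptic equations on domains of class $C^{1,1}$ (for instance Grisvard's monograph \cite[Section~2.3.3]{Gr85}; the class $C^{1,1}$ is precisely the threshold at which the second derivatives of the solution become $L^p$-integrable). This provides, for every $p\in(1,\infty)$, an estimate of the form
\[
\|u_i\|_{W^{2,p}(\Omega)}\le c(p,\Omega)\bigl(\|f_i\|_{L^p(\Omega)}+\|u_i\|_{W^{1-\frac{1}{p},p}(\partial\Omega)}+\|u_i\|_{W^{1,p}(\Omega)}\bigr),
\]
in which the boundary term carries the trace of $u_i$ itself, the homogeneous Neumann datum $\partial_n u_i=0$ having already been incorporated. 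To dispose of that term I would use the continuity of the trace operator $W^{1,p}(\Omega)\to W^{1-\frac{1}{p},p}(\partial\Omega)$, which bounds $\|u_i\|_{W^{1-\frac{1}{p},p}(\partial\Omega)}$ by $c\,\|u_i\|_{W^{1,p}(\Omega)}$, whence
\[
\|u_i\|_{W^{2,p}(\Omega)}\le c(p,\Omega)\bigl(\|f_i\|_{L^p(\Omega)}+\|u_i\|_{W^{1,p}(\Omega)}\bigr).
\]

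Finally I would sum over $i\in\{1,\dots,m\}$; since the norm on the product space is equivalent to the sum of the componentwise norms, with a constant depending only on $m$, this yields \eqref{W2Est} with $c=c(p,\Omega,m)$. I do not expect a genuine obstacle here: the argument is a routine combination of localized Calderón--Zygmund estimates near the (flattened) boundary with interior estimates and a trace inequality. The only points requiring a little care are to quote the regularity result in the sharp form valid for merely $C^{1,1}$ boundaries rather than smoother ones, and — if one wishes to read ``solution'' in the weak sense — to recall that such a weak solution automatically inherits the $W^{2,p}$ regularity asserted by that theory, so that the left-hand side of \eqref{W2Est} is meaningful.
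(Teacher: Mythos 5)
Your proposal follows exactly the same route as the paper's proof: reduce to the scalar Neumann problem componentwise, invoke the $W^{2,p}$-estimate from Grisvard \cite[Section~2.3.3]{Gr85} with the boundary trace term, absorb that term via the continuity of the trace operator $W^{1,p}(\Omega)\to W^{1-\frac{1}{p},p}(\partial\Omega)$, and sum over the components. The argument is correct and matches the paper step for step.
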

The following lemma slightly generalizes the well known dominated convergence theorem to the case of a sequence with convergent majorants.
\begin{lemma}\label{Lem:GenDomConv}
Let $\{f^n\}$ be a sequence of measurable functions on a measurable set $E$ converging a.e.\ in $E$ to $f$ and $\{g^n\}$ be a sequence of nonnegative measurable functions converging a.e.\ in $E$ to $g$ such that 
\begin{equation}\label{GnMajFn}
|f^n|\leq g^n\text{ for each }n\in\eN
\end{equation} 
and $\lim_{n\to\infty}\int_Eg^n=\int_E g<\infty$ then $f^n\to f$ in $L^1(E)$.
\begin{proof}
	The pointwise convergences of $\{f^n\}$, $\{g^n\}$ imply the measurability of limits $f,g$. Moreover, due to \eqref{GnMajFn} $|f|\leq g$ a.e.\ in $E$ follows. Hence $\{g^n+g-|f^n-f|\}$ is a sequence of nonnegative measurable functions such that $g^n+g-|f^n-f|\to 2g$ a.e.\ in $E$. By the Fatou lemma we have
	\begin{equation*}
		\int_E2g=\int_E \liminf_{n\to\infty}(g^n+g-|f^n-f|)\leq \liminf_{n\to\infty}\int_E\left( g^n+g-|f^n-f|\right)=\int_E 2g-\limsup_{n\to\infty}\int_E|f^n-f|.
	\end{equation*}
	We note that the last equality follows as $\int_E g^n\to\int_E g$ by assumption.
	Therefore $0\leq\liminf_{n\to\infty}\int_E|f^n-f|\leq \limsup_{n\to\infty}\int_E|f^n-f|\leq 0$ and we conclude $f^n\to f$ in $L^1(E)$.
\end{proof}
\end{lemma}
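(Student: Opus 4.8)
The plan is to reduce the statement to a single application of Fatou's lemma to a carefully chosen sequence of nonnegative functions. First I would record the elementary preliminaries: the a.e.\ limits $f$ and $g$ are measurable, and passing to the (a.e.) limit in the pointwise bound \eqref{GnMajFn} gives $|f|\leq g$ a.e.\ in $E$. Combined with the triangle inequality $|f^n-f|\leq|f^n|+|f|$, this yields $|f^n-f|\leq g^n+g$ a.e.\ in $E$, so that
\begin{equation*}
h^n:=g^n+g-|f^n-f|
\end{equation*}
is a sequence of nonnegative measurable functions with $h^n\to 2g$ a.e.\ in $E$.

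Next I would apply Fatou's lemma to $\{h^n\}$, obtaining
\begin{equation*}
\int_E 2g=\int_E\liminf_{n\to\infty}h^n\leq\liminf_{n\to\infty}\int_E h^n.
\end{equation*}
This is exactly the point at which the hypotheses $\int_E g<\infty$ and $\lim_{n\to\infty}\int_E g^n=\int_E g$ are used: since $\int_E 2g$ is finite and $\int_E g^n\to\int_E g$, the right-hand side equals $\int_E 2g-\limsup_{n\to\infty}\int_E|f^n-f|$, and cancelling the finite quantity $\int_E 2g$ leaves $\limsup_{n\to\infty}\int_E|f^n-f|\leq 0$. Since $\int_E|f^n-f|\geq 0$ for every $n$, this forces $\int_E|f^n-f|\to 0$, which is the assertion $f^n\to f$ in $L^1(E)$.

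I expect no genuine obstacle here: the lemma is a routine strengthening of the dominated convergence theorem, and the only place that needs a moment's care is the bookkeeping in the Fatou step — it is precisely the finiteness of $\int_E g$ that legitimizes rewriting the $\liminf$ of a difference as a difference involving a $\liminf$ term and a $\limsup$ term. For completeness I note an alternative route via the Vitali convergence theorem (deduce equi-integrability, and tightness if $|E|=\infty$, of $\{g^n\}$ from a Scheff\'e-type fact, then transfer it to $\{f^n\}$ using $|f^n|\leq g^n$), but this is longer and ultimately rests on the same Fatou-type inequality, so I would keep the direct argument above.
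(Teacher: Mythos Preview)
Your proposal is correct and follows essentially the same approach as the paper's proof: both construct the nonnegative sequence $g^n+g-|f^n-f|$, apply Fatou's lemma, and use the convergence $\int_E g^n\to\int_E g<\infty$ to extract $\limsup_{n\to\infty}\int_E|f^n-f|\leq 0$. If anything, you are slightly more explicit in justifying nonnegativity via the triangle inequality and in flagging why finiteness of $\int_E g$ is needed for the cancellation.
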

The following variant of Poincar\'e's inequality follows directly from \cite[Ch. 1, Theorem 1.5]{Ne2012}.
\begin{lemma}\label{Lem:PoincareAverage}
Let $\Omega\subset\eR^d$ be a bounded domain with continuous boundary. Then there is $c>0$ such that
\begin{equation*}
    \|u\|_{W^{1,2}(\Omega)}\leq c\left(\|\nabla u\|_{L^2(\Omega)}+\left|\int_\Omega u\right|\right)\text{ for all }u\in W^{1,2}(\Omega).
\end{equation*}
\end{lemma}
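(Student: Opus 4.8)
The plan is to deduce the inequality from a compactness/contradiction argument, the same mechanism that underlies the equivalent-norm theorem of \cite{Ne2012} quoted above. First I would suppose the conclusion fails: then for each $n\in\eN$ there is $u_n\in W^{1,2}(\Omega)$ with $\|u_n\|_{W^{1,2}(\Omega)}=1$ while $\|\nabla u_n\|_{L^2(\Omega)}+\bigl|\int_\Omega u_n\bigr|<\tfrac1n$. The sequence $\{u_n\}$ is bounded in $W^{1,2}(\Omega)$, so after passing to a subsequence it converges weakly in $W^{1,2}(\Omega)$ and, by the compact embedding $W^{1,2}(\Omega)\stackrel{C}{\hookrightarrow}L^2(\Omega)$, strongly in $L^2(\Omega)$, to some limit $u$.

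Next I would identify $u$. Since $\nabla u_n\to 0$ in $L^2(\Omega)$, the distributional gradient of the $L^2$-limit satisfies $\nabla u=0$; hence in fact $u_n\to u$ in $W^{1,2}(\Omega)$ and $\|u\|_{W^{1,2}(\Omega)}=1$. Because $\Omega$ is connected, $\nabla u=0$ forces $u$ to be a constant, and passing to the limit in $\bigl|\int_\Omega u_n\bigr|\to 0$ gives $\int_\Omega u=0$, so that constant vanishes. This contradicts $\|u\|_{W^{1,2}(\Omega)}=1$, which proves the lemma. Equivalently, one may observe that $u\mapsto\|\nabla u\|_{L^2(\Omega)}+\bigl|\int_\Omega u\bigr|$ is a continuous seminorm on $W^{1,2}(\Omega)$ that restricts to a genuine norm on the one-dimensional subspace of constants, and then invoke \cite[Ch. 1, Theorem 1.5]{Ne2012} verbatim; or, to avoid that machinery, split $u=(u-\overline u)+\overline u$ with $\overline u=|\Omega|^{-1}\int_\Omega u$, apply the classical Poincar\'e-Wirtinger inequality to $u-\overline u$ (noting $\nabla u=\nabla(u-\overline u)$), and combine with $\|\overline u\|_{L^2(\Omega)}=|\Omega|^{-1/2}\bigl|\int_\Omega u\bigr|$ and $(a+b)^2\leq 2a^2+2b^2$.

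The only point requiring care is the compact embedding $W^{1,2}(\Omega)\stackrel{C}{\hookrightarrow}L^2(\Omega)$ under the weak hypothesis that $\partial\Omega$ is merely continuous: this is not available for arbitrary bounded open sets, but a bounded domain with continuous boundary has the segment property, which suffices for the Rellich-Kondrachov theorem. Since this is exactly the class of domains for which \cite{Ne2012} develops the equivalent-norm theory, the cleanest route is to invoke that reference directly, as the authors do. I therefore expect no substantive obstacle beyond being explicit about the domain regularity that is being used.
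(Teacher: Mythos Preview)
Your proposal is correct and aligns with the paper's treatment: the paper does not give an independent proof but simply states that the inequality ``follows directly from \cite[Ch.~1, Theorem~1.5]{Ne2012}''. Your contradiction/compactness argument is precisely the standard mechanism behind that equivalent-norm theorem, and you yourself note the option of invoking the reference verbatim, so there is no substantive difference in approach.
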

We will use several times the compactness result in the ensuing lemma, for its proof see \cite[Theorem 5]{Sim87}.
\begin{lemma}\label{Lem:RelComp}
    Let $T>0$, $p\in[1,\infty]$ and Banach spaces $X_1,X_2,X_3$ satisfy $X_1\stackrel{C}{\hookrightarrow}X_2\hookrightarrow X_3$. Assume that $F\subset L^p(0,T;X_1)$ fulfills
    \begin{enumerate}
        \item $\sup_{f\in F}\|f\|_{L^p(0,T X_1)}<\infty$,
        \item $\sup_{f\in F}\|\tau_s f-f\|_{L^p(0,T-s;X_3)}\to 0$ as $s\to 0$.
    \end{enumerate}
    Then $F$ is relatively compact in $L^p(0,T;X_2)$ and $C([0,T];X_2)$ if $p=\infty$.
\end{lemma}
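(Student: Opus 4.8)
The plan is to combine Ehrling's interpolation lemma with a Kolmogorov--Riesz--Fr\'echet type compactness argument carried out first in the larger space $L^p(0,T;X_3)$. Since $X_1\stackrel{C}{\hookrightarrow}X_2\hookrightarrow X_3$, Ehrling's lemma applies: for every $\eta>0$ there is $C_\eta>0$ with $\|u\|_{X_2}\leq\eta\|u\|_{X_1}+C_\eta\|u\|_{X_3}$ for all $u\in X_1$ (proved by a routine contradiction argument exploiting the compact embedding). Using this pointwise in time for a difference $f-g$ and then integrating gives
\[
\|f-g\|_{L^p(0,T;X_2)}\leq 2\eta R+C_\eta\|f-g\|_{L^p(0,T;X_3)}\qquad\text{for all }f,g\in F,
\]
where $R:=\sup_{f\in F}\|f\|_{L^p(0,T;X_1)}<\infty$ by~(1). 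Consequently it is enough to prove that $F$ is relatively compact in $L^p(0,T;X_3)$: from any sequence in $F$ one then extracts a subsequence that is Cauchy in $L^p(0,T;X_3)$, and the displayed estimate (choosing $\eta$ small depending on the desired accuracy and on $R$) forces it to be Cauchy in $L^p(0,T;X_2)$ as well.

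To obtain relative compactness of $F$ in $L^p(0,T;X_3)$ for $1\leq p<\infty$ I would argue by total boundedness. Extend each $f\in F$ by $0$ outside $(0,T)$ and mollify in time, $f_\delta:=\rho_\delta*f$ with a standard mollifier $\rho_\delta$. First, for every fixed $\delta>0$ the family $F_\delta:=\{f_\delta:f\in F\}$ is relatively compact in $C([0,T];X_3)$, hence in $L^p(0,T;X_3)$: the bound $\|f_\delta(t)\|_{X_1}\leq C_\delta R$ together with $X_1\stackrel{C}{\hookrightarrow}X_3$ gives pointwise precompactness in $X_3$, while $\|f_\delta(t)-f_\delta(t')\|_{X_3}\leq C_\delta R\,|t-t'|$ gives equicontinuity, so Arzel\`a--Ascoli applies. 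Second, hypothesis~(2) yields
\[
\sup_{f\in F}\|f_\delta-f\|_{L^p(0,T;X_3)}\longrightarrow 0\qquad\text{as }\delta\to 0,
\]
because $\|f_\delta-f\|_{L^p(\eR;X_3)}\leq\sup_{|s|\leq\delta}\|\tau_s f-f\|_{L^p(\eR;X_3)}$, and the latter is controlled uniformly over $F$ by $\sup_{f\in F}\sup_{0<s\leq\delta}\|\tau_s f-f\|_{L^p(0,T-s;X_3)}$ plus small boundary contributions arising from the zero extension near $t=0$ and $t=T$. Since $F$ then lies inside an arbitrarily small ball around the relatively compact set $F_\delta$, it is totally bounded, hence relatively compact, in $L^p(0,T;X_3)$; combined with the reduction of the first paragraph this settles the claim for $1\leq p<\infty$.

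For $p=\infty$ the argument is adapted as follows: hypothesis~(2) now says that, after modification on a null set, the elements of $F$ are equi-uniformly continuous from $[0,T]$ into $X_3$; combining this with the uniform bound in $X_1$ and the Ehrling estimate above produces equicontinuity from $[0,T]$ into $X_2$, while $\{f(t):f\in F\}$ is precompact in $X_2$ by boundedness in $X_1$ and the compact embedding, so Arzel\`a--Ascoli gives relative compactness in $C([0,T];X_2)$. The one genuinely delicate point --- and the reason we merely refer to \cite[Theorem~5]{Sim87} for the complete argument --- is the uniform-in-$F$ smallness of the mollification error: the zero extension creates boundary layers at $t=0$ and $t=T$, and one must use both hypotheses together to see that these contributions vanish uniformly as $\delta\to0$; once that uniformity is secured, the remaining steps are routine Arzel\`a--Ascoli and interpolation bookkeeping.
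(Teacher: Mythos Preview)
The paper does not prove this lemma at all; it merely states it and refers to \cite[Theorem~5]{Sim87} for the proof. Your sketch is essentially an outline of Simon's original argument: the Ehrling reduction to relative compactness in $L^p(0,T;X_3)$, followed by approximation of $F$ by a family of time-regularised functions to which Arzel\`a--Ascoli applies, is precisely the skeleton of Simon's proof. You also correctly single out the one genuinely nontrivial step, namely the uniform smallness of the regularisation error near the endpoints $t=0$ and $t=T$.

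One remark on that delicate point: the zero-extension-plus-mollification route you describe really does run into trouble at the boundary, because $\|\tau_s f - f\|_{L^p(\eR;X_3)}$ for the extended function picks up a contribution $\|f\|_{L^p(0,s;X_3)}+\|f\|_{L^p(T-s,T;X_3)}$ whose uniform smallness is not immediate from hypotheses~(1)--(2). Simon avoids this by using the one-sided mean $M_a f(t)=a^{-1}\int_t^{t+a}f(s)\dd s$, which is defined on $[0,T-a]$ without any extension; this gives relative compactness of $F$ in $L^p(0,T-a;X_3)$ for every $a>0$, and the passage to the full interval then uses Ehrling once more together with the observation that hypothesis~(2) itself prevents uniform mass concentration near either endpoint. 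Since you already defer to \cite{Sim87} for this step, your sketch is an accurate roadmap and nothing more need be said.
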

The following lemma is a particular case of a more abstract result, cf.\ \cite[Lemma 9.1]{Alt12}.
\begin{lemma}\label{Lem:RefIneq}
    Let $N\in\eN$, a Hilbert space $H$ and $\{u_k\}\subset H$ be given. Moreover, assume that the function $u^N$ being defined via $u^N(t)=u_k$ for $t\in [(k-1)h,kh)$, $k\in\eN$ with $h=\frac{1}{N}$, satisfies
    \begin{equation}\label{BoundIntDiff}
        \int_0^{mh-s}\|u^N(t+s)-u^N(t)\|^2_H\dt\leq c s^{q}
    \end{equation}
    where $s=lh$, $l\in\eN$, $l\leq m$ and $q\in(0,1]$. Then \eqref{BoundIntDiff} holds with any $s>0$.
\end{lemma}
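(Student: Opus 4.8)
The statement is a particular case of \cite[Lemma~9.1]{Alt12}, and the plan is to give the elementary self-contained argument: deduce the estimate for an arbitrary shift $s>0$ from the two grid-compatible shifts $lh$ and $h$, for which \eqref{BoundIntDiff} is already assumed. If $s\geq mh$ the interval $[0,mh-s]$ is empty and nothing has to be shown, so assume $0<s<mh$ and write $s=lh+r$ with $l=\lfloor s/h\rfloor\in\{0,\dots,m-1\}$ and $r=s-lh\in[0,h)$. The first step is to split the increment along the grid: for $t\in[0,mh-s]$,
\begin{equation*}
u^N(t+s)-u^N(t)=\bigl(u^N(t+lh+r)-u^N(t+lh)\bigr)+\bigl(u^N(t+lh)-u^N(t)\bigr),
\end{equation*}
so that $\|u^N(t+s)-u^N(t)\|_H^2\leq 2\|u^N(t+lh+r)-u^N(t+lh)\|_H^2+2\|u^N(t+lh)-u^N(t)\|_H^2$.

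For the second summand I would integrate over $[0,mh-s]\subset[0,mh-lh]$ and invoke \eqref{BoundIntDiff} with the grid shift $lh$, which is admissible since $l\leq m$ (and when $l=0$ this summand vanishes identically); this yields a contribution bounded by $2c\,(lh)^q\leq 2c\,s^q$, using $lh\leq s$ and the monotonicity of $t\mapsto t^q$. For the first summand, the substitution $\sigma=t+lh$ turns its integral into $\int_{lh}^{mh-r}\|u^N(\sigma+r)-u^N(\sigma)\|_H^2\;\mathrm{d}\sigma$, which is bounded by $\int_0^{mh-r}\|u^N(\sigma+r)-u^N(\sigma)\|_H^2\;\mathrm{d}\sigma$; thus everything reduces to controlling a subgrid shift $r\in[0,h)$. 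For this I would exploit that $u^N$ is piecewise constant: on each block $[(k-1)h,kh)$ the integrand $\|u^N(\sigma+r)-u^N(\sigma)\|_H^2$ equals $\|u_{k+1}-u_k\|_H^2$ on the subinterval $[kh-r,kh)$ and vanishes elsewhere, hence
\begin{equation*}
\int_0^{mh-r}\|u^N(\sigma+r)-u^N(\sigma)\|_H^2\;\mathrm{d}\sigma=r\sum_{k=1}^{m-1}\|u_{k+1}-u_k\|_H^2=\frac{r}{h}\int_0^{mh-h}\|u^N(t+h)-u^N(t)\|_H^2\dt .
\end{equation*}
Applying \eqref{BoundIntDiff} once more, now with the grid shift $h$, bounds the last integral by $c\,h^q$, so the subgrid contribution is at most $2c\,r\,h^{q-1}$.

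Finally, since $0\leq r<h$ and $0<q\leq 1$ one has $r\,h^{q-1}=(r/h)^{1-q}\,r^q\leq r^q\leq s^q$, because $r/h<1$, $1-q\geq 0$ and $r\leq s$. Combining the two estimates yields \eqref{BoundIntDiff} for the arbitrary shift $s$, with $c$ replaced by $4c$. I expect the only delicate point to be precisely this last step, namely absorbing the loss $r/h$ coming from the subgrid shift into the gain $h^q$ supplied by the hypothesis; this is exactly where the restriction $q\leq 1$ together with $r<h$ is used, and it explains why the conclusion carries the same exponent $q$. The rest is routine bookkeeping with the step structure of $u^N$ and the triangle inequality.
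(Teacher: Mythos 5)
The paper does not prove this lemma; it cites \cite[Lemma~9.1]{Alt12} and moves on, so your self-contained argument is an addition rather than a variant. It is correct. The decomposition $s=lh+r$ with $l=\lfloor s/h\rfloor$, $r\in[0,h)$, the use of $\|a+b\|^2\leq 2\|a\|^2+2\|b\|^2$, and the direct application of \eqref{BoundIntDiff} to the grid shift $lh$ (admissible since $l\leq m-1<m$, and vacuous if $l=0$) are all in order. The heart of the argument is the identity, valid because $u^N$ is piecewise constant with step $h$ and jumps only at the grid points,
$\int_0^{mh-r}\|u^N(\sigma+r)-u^N(\sigma)\|^2_H\dd\sigma=(r/h)\int_0^{mh-h}\|u^N(t+h)-u^N(t)\|^2_H\dt ,$
which reduces the subgrid shift $r$ to the grid shift $h$ covered by the hypothesis; the final absorption $r\,h^{q-1}=(r/h)^{1-q}\,r^q\leq s^q$, using $r<h$, $r\leq s$ and $q\in(0,1]$, is exactly right and is the only place the exponent restriction is needed. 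The loss of constant from $c$ to $4c$ is harmless for how the lemma is used in the paper (relative compactness of the interpolants via Lemma~\ref{Lem:RelComp}), and in fact the stated conclusion should be read as asserting a bound $\lesssim s^q$ with a possibly larger constant. Your version has the merit of making transparent where $r<h$ and $q\leq 1$ do their work, something the abstract formulation in \cite{Alt12} keeps hidden.
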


\noindent{\textbf{Acknowledgment}.} This work is funded by the Deutsche Forschungsgemeinschaft (DFG, German Research Foundation), grant SCHL 1706/4-2, project number 391682204.

\bibliographystyle{plain}

	\end{document}